\documentclass[onefignum,onetabnum,final]{siamart251216}

\usepackage{lipsum}
\usepackage{amsmath,amssymb,amsfonts,latexsym,stmaryrd}
\usepackage{graphicx,float}
\usepackage{mathrsfs} 
\usepackage{color}
\usepackage{epstopdf}
\usepackage{algorithmic}
\usepackage{multirow}
\ifpdf
  \DeclareGraphicsExtensions{.eps,.pdf,.png,.jpg}
\else
  \DeclareGraphicsExtensions{.eps}
\fi
\usepackage{diagbox}

\def\ds{\displaystyle}

\def\O{\Omega}

\renewcommand\sp{\mathop{\mathrm{Sp}}\nolimits}

\usepackage{booktabs}
\usepackage{float}

\newcommand\bu{\boldsymbol{u}}
\newcommand\bv{\boldsymbol{v}}

\newcommand\bn{\boldsymbol{n}}

\newcommand\curl{\textbf{\text{curl}\,}}

\def\hdel{\widehat{\delta}}
\def\CM{\mathcal{X}}
\def\CN{\mathcal{Y}}

\newcommand\bF{\boldsymbol{f}}

\newcommand\bT{\boldsymbol{T}}
\newcommand\bP{\boldsymbol{P}}

\def\CT{{\mathcal T}}

\newcommand{\dd}{\texttt{d}}

\newcommand\bsig{\boldsymbol{\sigma}}
\newcommand\brho{\boldsymbol{\rho}}
\newcommand\btau{\boldsymbol{\tau}}
\newcommand\bPi{\boldsymbol{\Pi}}

\renewcommand\H{\mathrm{H}}
\renewcommand\L{\mathrm{L}}

\renewcommand\O{\Omega}

\newcommand\bdiv{\mathop{\mathbf{div}}\nolimits}

\renewcommand\div{\mathop{\mathrm{div}}\nolimits}
\newcommand\rot{\mathop{\mathrm{rot}}\nolimits}

\newcommand\tr{\mathop{\mathrm{tr}}\nolimits}

\renewcommand\sp{\mathop{\mathrm{sp}}\nolimits}

\newcommand{\vertiii}[1]{{\left\vert\kern-0.25ex\left\vert\kern-0.25ex\left\vert #1 
    \right\vert\kern-0.25ex\right\vert\kern-0.25ex\right\vert}}
\newcommand{\vertiH}[1]{{\left\vert\kern-0.25ex\left\vert #1 
		\right\vert\kern-0.25ex\right\vert_{\H(h)}}}

\newsiamremark{remark}{Remark}
\newsiamremark{assumption}{Assumption}
\newsiamremark{hypothesis}{Hypothesis}
\crefname{hypothesis}{Hypothesis}{Hypotheses}
\newsiamthm{claim}{Claim}

\headers{elasticity eigenvalue problem}{Arbaz Khan, Felipe Lepe and Jesus Vellojin}

\title{A locking free mixed FEM based on a  pure pseudostress based formulation for the elasticity eigenproblem \thanks{Submitted to the editors DATE.
\funding{Arbaz Khan was  partially
	supported by ANRF ARG MATRICS grant	ANRF/ARGM/2025/001949/MTR. Felipe Lepe was partially supported by  DICREA through Proyecto Regular   RE2514703, Universidad del B\'io-B\'io. Jesus Vellojin was partially supported by the National Agency for Research and Development, ANID-Chile through FONDECYT Postdoctorado project 3230302.
}}}

\author{Arbaz Khan\thanks{Department of Mathematics, Indian Institute of Technology Roorkee, Roorkee 247667, India. \email{arbaz@ma.iitr.ac.in}}
\and Felipe Lepe\thanks{GIMNAP-Departamento de Matem\'atica, Universidad del B\'io - B\'io, Casilla 5-C, Concepci\'on, Chile. \email{flepe@ubiobio.cl}.}
\and Jesus Vellojin\thanks{Departamento de Ciencias, Universidad Técnica Federico Santa María, Av. Federico Sta. María 6090, Viña del Mar, Chile. \email{jesus.vellojinm@usm.cl}.}}

\usepackage{amsopn}

\ifpdf
\hypersetup{
  pdftitle={DG for the nearly incompressible elasticity eigenvalue problem},
  pdfauthor={Arbaz Khan, Felipe Lepe and Jesus Vellojin}
}
\fi

\newcommand\vdiv{\mathop{\mathrm{div}}\nolimits}
\newcommand\Q{\mathrm{Q}}
\renewcommand\H{\mathrm{H}}

\newcommand\bz{\boldsymbol{z}}
\def\CE{{\mathcal E}}

\newcommand\err{\texttt{err}}
\newcommand\eff{\texttt{eff}}

\newcommand{\cred}[1]{\textcolor{black}{#1}}
\newcommand{\cblue}[1]{\textcolor{black}{#1}}

\newcommand{\jumpp}[1]{\llbracket #1 \rrbracket}

\begin{document}

\maketitle

\begin{abstract}
\cblue{We analyze a novel locking-free mixed formulation for the elasticity eigenvalue problem in both two and three dimensions, expressed exclusively in terms of the pseudostress tensor. An important feature of this formulation is that it does not require the enforcement of symmetry, either in a weak or strong sense.}
The displacement of the structure is recovered via a postprocess of the computed pseudostress. We introduce a mixed finite element method based in the tensorial version of the standard families of finite elements to discretize the space $\boldsymbol{\mathcal{H}}(\bdiv)$. We prove convergence and a priori error estimates under the theory of non-compact operators. Additionally, we perform an a posteriori error analysis for the problem, proving reliability and efficiency of the proposed indicator. We validate our theoretical results with numerical tests on different geometrical and physical configurations.
\end{abstract}

\begin{keywords}
Eigenvalue problems, finite element method, error estimates, A posteriori analysis
\end{keywords}

\begin{AMS}
 35P15, 65N15, 65N25, 65N30, 74B05
\end{AMS}

\section{Introduction}\label{sec:intro}

\cblue{The accurate description of the deformation of elastic structures is crucial for the design of different devices, vehicles, buildings, etc. since their stability  depends on the material properties, environments, interaction with other structures or fluids, just for mention some of the most relevant. These variables (among others of course) demand the use of reliable and robust numerical schemes for the analysis and prediction of  the behavior of structures under certain conditions. On this context, we focus our attention on the linear elasticity equations, which are the most common and simple set of partial differential equations that describe the displacement of an elastic structure. On this context, we focus our attention on the description of the displacement for the eigenvalue problem associated to the elasticity equations, since this problem is related to vibration of structures which are important on the applications that we mention at the beginning of this article.}

\cblue{The primary goal of the elasticity equations is to describe the displacement of a structure. However, other quantities of relevance may be important to also consider, such as the stress, rotations, bending moments, etc. This motivates the design of mixed formulation where additional unknowns on the PDE systems allows to describe in a more complete manner the response of a structure under certain conditions.  This demands to design and analyze convenient numerical schemes capable to approximate all the unknowns involved on the mixed formulations, which are commonly called mixed methods. On this subject, the literature is abundant and particularly for the elasticity eigenvalue  system we mention 
\cite{MR4279087, MR4570534, MR3962898, MR4396855, MR3376135, MR4542511,MR3036997}. These mixed formulations have the capability of avoiding the numerical locking phenomenon that appears when the Poisson ratio is close (or even equal) to $1/2$. Moreover, mixed formulations allow to prove that the spectrum of the eigenvalue problem on its nearly incompressible regime converge to the spectrum of the Stokes eigenvalue system as is proved in, for instance \cite{MR4570534,MR3962898}, which coincided with the perfectly incompressible elasticity eigenvalue problem.}

\cblue{In the present work we propose a mixed formulation of the linear elasticity eigenvalue problem inspired by the recent work \cite{MR4542511}, where a DG method for a formulation depending only on the
stress is proposed. This formulation is similar to the one studied in \cite{MMR3} for the Stokes eigenvalue problem, where only the pseudostress tensor is the unknown of the problem and the rest of the relevant unknowns as the velocity and pressure are recovered with a simple postprocessing of the pseudostress. For elasticity this is also possible and our intention is to introduce a pure pseudostress formulation of the elasticity eigensystem. The continuous problem is a formulation as the ones in  \cite{MR4542511,MMR3} where the regularity results for the source and eigenvalue problems are well established. The mixed formulation of our paper also allows to analyze the behavior of the elasticity spectrum when the Poisson ratio converges to $1/2$ where precisely the Lam\'e coefficient $\lambda$ (cf. \eqref{eq:lame}) blows up. We prove precisely that the spectrum of the nearly incompressible elasticity problem converge to the perfectly incompressible case which coincides with the Stokes spectrum (see \cite{MR4570534,MR3962898}).}  
\cblue{An important  difference with a formulation like \cite{MR4570534} yields in the fact that the solution operator is non-compact and hence, the theory of \cite{MR483400,MR483401} must be applied for the analysis of convergence and error estimates. For the discrete analysis, we focus on the classic Raviart--Thomas $\boldsymbol{\mathcal{H}}(\bdiv)$-conforming finite elements. This is a natural choice for the pure pseudostress formulation that we present, capable to approximate  accurately the spectrum of the elasticity eigensystem on the nearly and perfectly incompressible regimes, with no presence of spurious eigenvalues. }

\cblue{Additionally to the convergence and a priori error estimates, we propose a residual type of a posteriori error estimator. This estimator, which results to be efficient and reliable, is capable to recover the optimal order of convergence when the nearly incompressible elasticity eigenvalue problem is considered and, with minor modifications on the definition of the indicator, works also con the perfectly incompressible case. This is a completely novel type of a posteriori  error estimator  for eigenvalue problems  which emerges due to the pure stress formulation that we propose. }
\cblue{\subsection{Outline} The paper is organized as follows: In Section \ref{sec:model-problem} we introduce the eigenvalue  model problem. Here, we present the pure pseudostress formulation and the analysis of the source and eigenvalue problems. We introduce the solution operator, spectral characterization, and the relation of the spectrum of the nearly incompressible eigenvalue problem and its limit. In Section \ref{sec:mixed_method} we introduce the mixed finite element method to approximate the spectrum. We present the FEM spaces, the discrete eigenvalue problem, and the discrete solution operator. With this discrete solution operator, we analyze its convergence to the continuous operator as the mesh size goes to zero, according to the theory of non compact operators. Section \ref{sec:spec_convergence} contains the analysis of the spectral convergence and a priori error estimates for the eigenvalues and eigenfunctions, whereas Section \ref{sec:apost} introduces and analyzes the a posteriori error estimator. Finally in Section \ref{sec:numerical-experiments} we report several numerical tests to assess the performance of the proposed mixed method for both, the a priori and a posteriori error analyses.}

\section{Model problem}\label{sec:model-problem}
Let $\Omega\subset\mathbb{R}^d$ with $d\in\{2,3\}$ be an open and bounded
polygonal/polyhedral Lipschitz domain with boundary $\Gamma$.
We are interested in the elasticity eigenvalue problem: Find $\kappa\in\mathbb{R}^+$ and the pair $(\bsig,\bu)$ such that
\begin{equation}\label{def:elast_system}
\left\{
\begin{array}{rcll}
\boldsymbol{\sigma} & = & 2\mu\boldsymbol{\varepsilon}(\bu)+\lambda\tr(\boldsymbol{\varepsilon}(\bu))\mathbf{I}&  \text{ in } \quad \Omega, \\
\bdiv\bsig & = & -\kappa\varrho\bu & \text{ in } \quad \Omega, \\
\bu & = & \mathbf{0} & \text{ on } \quad \Gamma,\\
\end{array}
\right.
\end{equation}
 Here, $\bu$ represents the displacement of the elastic structure, $\bsig$ is the Cauchy  tensor, $\varrho$ is the mass density of the material which we assume as positive, $\lambda$ and $\mu$ are  the positive Lam\'e constants defined by
 \begin{equation}
 \label{eq:lame}
\lambda:=\frac{E\nu}{(1+\nu)(1-2\nu)}\quad\text{and}\quad\mu:=\frac{E}{2(1+\nu)},
\end{equation}
where $E$ is the Young's modulus and $\nu$ is the Poisson ratio. Let us remark that on our paper, we are considering ordinary materials, implying that $\nu\in [0,1/2)$ where the case $\nu=1/2$ corresponds to perfectly incompressible materials. Also, $\mathbf{I}\in\mathbb{R}^{d\times d}$ is the identity matrix, and $\boldsymbol{\varepsilon}(\bu)$ represents the tensor of small deformations,  given by $\boldsymbol{\varepsilon}(\bu):=\frac{1}{2}(\nabla\bu+(\nabla\bu)^{\texttt{t}})$, where $\texttt{t}$ is the transpose operator. 
 
 From the first equation of
\eqref{def:elast_system} we have that $\bdiv\bsig=\mu\Delta\bu+(\lambda+\mu)\nabla\div\bu$.
This allows us  to rewrite \eqref{def:elast_system} as follows
\begin{equation*}\label{def:elast_system_reduced}
\left\{
\begin{array}{rcll}
\mu\Delta\bu+(\lambda+\mu)\nabla\div\bu & = & -\kappa\varrho\bu &  \text{ in } \quad \Omega, \\
\bu & = & \mathbf{0} & \text{ on } \quad \Gamma.
\end{array}
\right.
\end{equation*}
Now we introduce the so called pseudostress tensor, defined by (see \cite{MR3453481} for instance)
\begin{equation*}
\label{eq:pseudo}
\brho:=\mu\nabla\bu+(\lambda+\mu)\div\bu\mathbf{I}=\mu\nabla\bu+(\lambda+\mu)\tr(\nabla\bu)\mathbf{I}.
\end{equation*}
Observe that $\bdiv\bsig=\bdiv\brho$. Hence, we have the following formulation where the pseudostress and the displacement are the main unknowns: Find $\kappa\in\mathbb{R}^+$ and $(\brho, \bu)$ such that
\begin{equation}\label{def:elast_system_rho_1}
\left\{
\begin{array}{rcll}
\brho & = &\mu\nabla\bu+(\lambda+\mu)\tr(\nabla\bu)\mathbf{I}&  \text{ in } \quad \Omega, \\
\bdiv\brho & = & -\kappa\varrho\bu & \text{ in } \quad \Omega, \\
\bu & = & \mathbf{0} & \text{ on } \quad \Gamma.
\end{array}
\right.
\end{equation}
Moreover, the following identity holds (see \cite[Section 2]{MR3453481} for details)
\begin{equation*}
\displaystyle\frac{1}{\mu}\left\{\brho-\frac{\lambda+\mu}{d\lambda+(d+1)\mu}\tr(\brho)\mathbf{I} \right\}=\nabla\bu,
\end{equation*}
which, replacing in \eqref{def:elast_system_rho_1}, leads to the following  eigenvalue problem
\begin{equation}\label{def:elast_system_rho_mix}
\left\{
\begin{array}{rcll}
\displaystyle\frac{1}{\mu}\left\{\brho-\frac{\lambda+\mu}{d\lambda+(d+1)\mu}\tr(\brho)\mathbf{I} \right\}& = &\nabla\bu&  \text{ in } \quad \Omega, \\
\bdiv\brho & = & -\kappa\varrho\bu & \text{ in } \quad \Omega, \\
\bu & = & \mathbf{0} & \text{ on } \quad \Gamma.
\end{array}
\right.
\end{equation}

From  the second equation on \eqref{def:elast_system_rho_mix},  we observe that when is restricted to the boundary $\Gamma$, we have $\varrho^{-1}(\bdiv\brho|_{\Gamma})=-\kappa(\bu|_{\Gamma})=0$. Hence, replacing the displacement on the first equation and using the Dirichlet boundary condition, we obtain the following pure pseudostress system of PDEs
\begin{equation}\label{def:elast_system_rho_pure}
\left\{
\begin{array}{rcll}
\kappa\displaystyle \left\{\brho-\frac{\lambda+\mu}{d\lambda+(d+1)\mu}\tr(\brho)\mathbf{I} \right\}& = &-\mu\nabla(\varrho^{-1}\bdiv\brho)&  \text{ in } \quad \Omega, \\
\varrho^{-1}\bdiv\brho  & = & \mathbf{0} & \text{ on } \quad \Gamma.
\end{array}
\right.
\end{equation}
\subsection{Variational formulation and preliminary results}
Now we derive a variational formulation for the analysis of \eqref{def:elast_system_rho_pure}. Let us define $\mathbf{X}:=\boldsymbol{\mathcal{H}}(\bdiv,\O)$.  If $\boldsymbol{n}$ denotes the unit vector on $\Gamma$, we  define the trace operators $\gamma_{\boldsymbol{n}}:\mathbf{X}\rightarrow \H^{-1/2}(\Gamma)$ and $\gamma_0:\H^1(\O)\rightarrow \L^2(\Gamma)$. Now, for $\btau\in\mathbf{X}$, we multiply the first equation of \eqref{def:elast_system_rho_pure} and integrate by parts obtaining 
\begin{multline}
\label{eq:IBP1}
\kappa\displaystyle \left\{\int_{\O}\brho:\btau-\frac{\lambda+\mu}{d\lambda+(d+1)\mu}\int_{\O}\tr(\brho) \tr(\btau)\right\} = -\mu\int_{\O}\nabla(\varrho^{-1}\bdiv\boldsymbol{\rho)}:\btau\\
=\mu\int_{\O}\varrho^{-1}\bdiv\brho\cdot\bdiv\btau+\mu\langle\gamma_{\boldsymbol{n}}(\btau),\gamma_0(\varrho^{-1}\bdiv\brho)\rangle=\mu\int_{\O}\varrho^{-1}\bdiv\brho\cdot\bdiv\btau,
\end{multline}
where $\langle\cdot,\cdot\rangle$ denotes the corresponding duality pairing. If $\btau^{\texttt{d}}$ represents the deviatoric of $\btau$ defined by $\btau^{\texttt{d}}:=\btau-d^{-1}\tr(\btau)\mathbf{I}$, \eqref{eq:IBP1} can be written alternatively as follows
\begin{equation}
\label{eq:no_shift}
\kappa\displaystyle \left\{\int_{\O}\brho^{\texttt{d}}:\btau^{\texttt{d}}+\frac{\mu}{d(d\lambda+(d+1)\mu)}\int_{\O}\tr(\brho) \tr(\btau)\right\}=\mu\int_{\O}\varrho^{-1}\bdiv\brho\cdot\bdiv\btau\quad\forall\btau\in\mathbf{X}.
\end{equation}
As a first remark, we observe from \eqref{eq:no_shift} that setting $\btau=\brho$ and taking into account that $\brho\neq\boldsymbol{0}$ and the coefficients are positive,  we have
\begin{equation*}
\kappa=\frac{\mu\varrho^{-1}d(d\lambda +(d+1)\mu)\|\bdiv\brho\|_{0,\O}^2}{d(d\lambda +(d+1)\mu)\|\brho^{\texttt{d}}\|_{0,\O}^2+\mu\|\tr(\brho)\|_{0,\O}^2}>0,
\end{equation*}
which guarantees that the eigenvalues of the problem are positive.

On the other hand, since the right hand of \eqref{eq:no_shift} is not coercive, it is necessary to use a shift argument. Hence, adding in both sides of  \eqref{eq:no_shift} the terms on the left hand side, we obtain 
\begin{multline}
\label{eq:with_shift}
(\kappa+1)\displaystyle \left\{\int_{\O}\brho^{\texttt{d}}:\btau^{\texttt{d}}+\frac{\mu}{d(d\lambda+(d+1)\mu)}\int_{\O}\tr(\brho) \tr(\btau)\right\}\\=\mu\int_{\O}\varrho^{-1}\bdiv\brho\cdot\bdiv\btau+\int_{\O}\brho^{\texttt{d}}:\btau^{\texttt{d}}+\frac{\mu}{d(d\lambda+(d+1)\mu)}\int_{\O}\tr(\brho) \tr(\btau)\quad\forall\btau\in\mathbf{X}.
\end{multline}

Let us introduce the bilinear forms $a:\mathbf{X}\times\mathbf{X}\rightarrow\mathbb{R}$ and $b:\mathbf{X}\times\mathbf{X}\rightarrow\mathbb{R}$, which are  defined for all $\boldsymbol{\chi},\btau\in\mathbf{X}$ by 
\begin{equation*}
a(\boldsymbol{\chi},\btau):=\mu\int_{\O}\varrho^{-1}\bdiv\boldsymbol{\chi}\cdot\bdiv\btau+\int_{\O}\boldsymbol{\chi}^{\texttt{d}}:\btau^{\texttt{d}}+\frac{\mu}{d(d\lambda+(d+1)\mu)}\int_{\O}\tr(\boldsymbol{\chi}) \tr(\btau),
\end{equation*}
and 
\begin{equation*}
b(\boldsymbol{\chi},\btau):=\int_{\O}\boldsymbol{\chi}^{\texttt{d}}:\btau^{\texttt{d}}+\frac{\mu}{d(d\lambda+(d+1)\mu)}\int_{\O}\tr(\boldsymbol{\chi}) \tr(\btau).
\end{equation*}

With these bilinear forms at hand, \eqref{eq:with_shift} is stated as follows: find $\kappa\in\mathbb{R}^+$ and $\boldsymbol{0}\neq\brho\in\mathbf{X}$ such that
\begin{equation}
\label{eq:spectral_continuous}
a(\brho,\btau)=(\kappa+1) b(\brho,\btau)\quad\forall\btau\in\mathbf{X}.
\end{equation}

It is easy to check that $a(\cdot,\cdot)$ and $b(\cdot,\cdot)$ are continuous. In fact, for any $\brho,\btau\in\mathbf{X}$, we have 
\begin{equation*}
|a(\brho,\btau)|\leq \max\left\{\mu\varrho^{-1},\left(\frac{d+\sqrt{d}}{d}\right)^2+\frac{d}{(d+1)\mu}\right\}\|\brho\|_{\bdiv,\O}\|\btau\|_{\bdiv,\O},
\end{equation*}
and 
\begin{equation*}
|b(\brho,\btau)|\leq \left\{\left(\frac{d+\sqrt{d}}{d}\right)^2 +\frac{d}{(d+1)\mu} \right\}\|\brho\|_{\bdiv,\O}\|\btau\|_{\bdiv,\O}.
\end{equation*}
It is important to remark that the continuity constant for each bilinear form is independent of $\lambda$. 

Following the approach of \cite{MR3453481,MR4570534}, for the analysis we need to decompose properly the space $\mathbf{X}$. This decomposition consists, first,  into consider the following subspace
\begin{equation*}
\mathbf{X}_0=\left\{\btau\in\mathbf{X}\,: \,\int_{\O}\tr(\btau)=0\,\,\,\text{in}\,\,\O \right\},
\end{equation*}
and then, write $\mathbf{X}:=\mathbf{X}_0\oplus\mathbb{R}\mathbf{I}$. This decomposition states that there exists $\btau_0\in\mathbf{X}_0$ and a constant $m$ such that for $\btau\in\mathbf{X}$ there holds $\btau=\btau_0+w\mathbf{I}$, where the constant $w$ is defined by $w:=(d|\O|)^{-1}(\tr(\btau),1)_{0,\O}$. With this space $\mathbf{X}_0$ at hand, eigenvalue problem \eqref{eq:spectral_continuous} can be written as follows:  find $\kappa\in\mathbb{R}^+$ and $\boldsymbol{0}\neq\brho\in\mathbf{X}_0$ such that
\begin{equation}
\label{eq:spectral_continuous_H0}
a(\brho,\btau)=(\kappa+1) b(\brho,\btau)\quad\forall\btau\in\mathbf{X}_0.
\end{equation}

Let us invoke the following technical result available in  \cite[Chapter 9, Proposition 9.1.1]{MR3097958}).
\begin{lemma}\label{lmm:cota}
There exists a constant $c_1>0$, depending on $\O$, such that
\begin{equation*}
c_1\|\btau\|_{0,\O}^2\leq \|\btau^{\dd}\|_{0,\O}^2+\|\bdiv\btau\|_{0,\O}^2\qquad\forall \btau\in \mathbf{X}_0.
\end{equation*}
\end{lemma}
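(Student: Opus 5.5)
The plan is to reduce the inequality to a bound on the trace of $\btau$, since $\|\btau\|_{0,\O}^2=\|\btau^{\dd}\|_{0,\O}^2+d^{-1}\|\tr(\btau)\|_{0,\O}^2$ pointwise orthogonally. It therefore suffices to show that
\begin{equation*}
\|\tr(\btau)\|_{0,\O}\leq C\left(\|\btau^{\dd}\|_{0,\O}+\|\bdiv\btau\|_{0,\O}\right)\qquad\forall\btau\in\mathbf{X}_0 .
\end{equation*}
Because $\int_{\O}\tr(\btau)=0$, the function $q:=\tr(\btau)$ lies in $\L^2_0(\O)$. I will use the surjectivity of the divergence from $\H^1_0(\O)^d$ onto $\L^2_0(\O)$, which holds for a bounded Lipschitz domain (the Bogovski\u{\i}/Ne\v{c}as inf-sup result). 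This gives $\bz\in\H^1_0(\O)^d$ with $\div\bz=q$ and $\|\bz\|_{1,\O}\leq C_\O\|q\|_{0,\O}$.

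Next I test $\btau$ against $\nabla\bz$. Since $\bz$ vanishes on $\Gamma$, integration by parts in $\mathbf{X}$ holds without boundary term:
\begin{equation*}
\int_{\O}\btau:\nabla\bz=-\int_{\O}\bdiv\btau\cdot\bz .
\end{equation*}
Writing $\btau=\btau^{\dd}+d^{-1}q\mathbf{I}$ and using $\mathbf{I}:\nabla\bz=\div\bz=q$, the left-hand side becomes $\int_{\O}\btau^{\dd}:\nabla\bz+d^{-1}\|q\|_{0,\O}^2$. Hence
\begin{equation*}
d^{-1}\|q\|_{0,\O}^2\leq \left(\|\btau^{\dd}\|_{0,\O}+\|\bdiv\btau\|_{0,\O}\right)\|\bz\|_{1,\O}\leq C_\O\left(\|\btau^{\dd}\|_{0,\O}+\|\bdiv\btau\|_{0,\O}\right)\|q\|_{0,\O}.
\end{equation*}
Dividing by $\|q\|_{0,\O}$ and squaring gives the required trace bound. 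Combining it with the orthogonal splitting of $\|\btau\|_{0,\O}^2$ yields the claim, with $c_1$ depending only on $d$ and $C_\O$.

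The only nontrivial ingredient is the right inverse of the divergence. This is where the Lipschitz assumption on $\O$ and the zero-mean condition defining $\mathbf{X}_0$ are used, and I would cite it as a standard result. Two further points need care. First, the zero-mean condition is essential, since $\btau=\mathbf{I}$ violates the inequality; this is exactly why the statement is restricted to $\mathbf{X}_0$. Second, the integration by parts requires only $\btau\in\mathbf{X}$ and $\bz\in\H^1_0(\O)^d$, so no extra regularity is needed. Everything else is routine algebra.
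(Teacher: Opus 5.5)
Your proof is correct, and it is the standard argument behind this result, which the paper does not prove but cites from \cite[Proposition 9.1.1]{MR3097958}: split $\btau$ orthogonally into its deviatoric and trace parts, then control $\tr(\btau)\in\L^2_0(\O)$ through a right inverse of $\div:\H^1_0(\O)^d\to\L^2_0(\O)$ and integration by parts against $\nabla\bz$. One hypothesis should be stated explicitly: $\O$ must be connected (implicit in the word domain), because otherwise the divergence is not onto the globally zero-mean functions, and piecewise-constant multiples of $\mathbf{I}$ with zero total trace would violate the inequality.
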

 
 Now, for $\btau\in\mathbf{X}_0$, the fact that the Lam\'e constants are positive, and Lemma \ref{lmm:cota} at hand, we have
 \begin{multline}
 \label{eq:coercive_a}
 a(\btau,\btau)=\mu\varrho^{-1}\|\bdiv\btau\|_{0,\O}^2+\|\btau^{\texttt{d}}\|_{0,\O}^2+\frac{\mu}{d(d\lambda+(d+1)\mu)}\|\tr(\btau)\|_{0,\O}^2\\
 \geq \min\{\mu\varrho^{-1},1\}(\|\bdiv\btau\|_{0,\O}^2+\|\btau^{\texttt{d}}\|_{0,\O}^2)\\
 \geq \frac{1}{2}\min\{\mu\varrho^{-1},1\}\min\{c_1,1\}\|\btau\|_{\bdiv,\O}^2,
 \end{multline}
 proving the coercivity of $a(\cdot,\cdot)$ in $\mathbf{X}_0$.
 
 As usual in the analysis of eigenvalue problems, we introduce the linear solution operator $\bT_{\lambda}$, which we define by 
 \begin{equation*}
 \bT_{\lambda}:\mathbf{X}_0\rightarrow \mathbf{X}_0,\qquad \boldsymbol{f}\mapsto \bT_{\lambda}\boldsymbol{f}:=\widehat{\brho},
 \end{equation*}
 where $\widehat{\brho}\in\mathbf{X}_0$ is the solution of the following source problem: given $\boldsymbol{f}\in\mathbf{X}_0$, find $\widehat{\brho}\in\mathbf{X}_0$ such that 
 \begin{equation}
 \label{eq:source_cont}
 a(\widehat{\brho},\btau)=b(\boldsymbol{f},\btau)\quad\forall\btau\in\mathbf{X}_0.
 \end{equation}
 From the coercivity of $a(\cdot,\cdot)$ in $\mathbf{X}_0$ given by \eqref{eq:coercive_a} and Lax-Milgram's lemma, we conclude that problem \eqref{eq:source_cont} admits a unique solution that satisfies
 \begin{equation}
 \label{eq:dependency}
\|\bT_{\lambda}\boldsymbol{f}\|_{\bdiv,\O }= \|\widehat{\brho}\|_{\bdiv,\O}\leq C\|\boldsymbol{f}\|_{0,\O},
 \end{equation}
 where $C>0$ is independent of $\lambda$. This implies that $\bT_{\lambda}$ is well defined. Moreover it is easy to check that $(\kappa,\boldsymbol{\rho})\in \mathbb{R}^+\times\mathbf{X}_0$ solves problem \eqref{eq:spectral_continuous_H0} if and only if $(1/(1+\kappa),\boldsymbol{\rho})$ is an eigenpair of $\bT_{\lambda}$ with a nonvanishing eigenvalue. This means that $\bT_{\lambda}\boldsymbol{\rho}=\xi\boldsymbol{\rho}$ with $\xi:=\frac{1}{\kappa+1}\neq 0$ and $\boldsymbol{\rho}\neq\boldsymbol{0}$.

 Since bilinear forms $a(\cdot,\cdot)$ and $b(\cdot,\cdot)$ are symmetric, it is easy to prove that $\bT_{\lambda}$ is selfadjoint with respect to $a(\cdot,\cdot)$. Indeed, if $\boldsymbol{f},\boldsymbol{g}\in\mathbf{X}_0$, then
 \begin{equation*}
 a(\bT_{\lambda}\boldsymbol{f},\boldsymbol{g})=b(\boldsymbol{f},\boldsymbol{g})=b(\boldsymbol{g},\boldsymbol{f})=a(\bT_{\lambda}\boldsymbol{g},\boldsymbol{f})=a(\boldsymbol{f},\bT_{\lambda}\boldsymbol{g}).
 \end{equation*}

 An important fact of $\bT_{\lambda}$ is that, according to its definition, is non-compact, since $\mathbf{X}_0$ is not compactly embedded onto $\mathbf{L}^2(\O)$. This is important since it gives the highlights that our analysis of spectral convergence and a priori error estimates now must be performed under the framework of the theory of \cite{MR483400,MR483401}. However, additional remarks on $\bT_{\lambda}$ are possible to observe.
 
 Let us notice that  $\bT_{\lambda}$ satisfies the following property: let us define the space 
 \begin{equation}
 \label{eq:spaceK}
 \mathbf{K}:=\{\btau\in\mathbf{X}_0\,\,:\,\,\bdiv\btau=\boldsymbol{0}\,\,\text{in}\,\O\}.
 \end{equation}
 We observe that if $\bT_{\lambda}$ is restricted to this space in the following sense $\bT_{\lambda}|_{\mathbf{K}}:\mathbf{K}\rightarrow\mathbf{K}$, it  reduces to the identity and hence, $\xi=1$ is an eigenvalue of $\bT_{\lambda}$ where $\boldsymbol{\rho}$ is an  associated eigenfunction if an only if
 \begin{equation*}
 \int_{\O}\bdiv\boldsymbol{\rho}\cdot\bdiv\boldsymbol{\tau}=0\quad\forall\btau\in\mathbf{X}_0,
 \end{equation*}
 implying that $\xi=1$ is an eigenvalue of $\bT_{\lambda}$ with associated eigenspace $\mathbf{K}$. 
 
\cblue{Now the aim is to decompose $\mathbf{X}_0$ as a direct sum $\mathbf{X}_0=\mathbf{K}\otimes\mathbf{K}^{\bot}$ and prove that $\mathbf{K}^{\bot}$ is a more regular space. To do this task, we follow the arguments on \cite[Section 3]{MMR3}.
Let us introduce the following operator $\bP:\mathbf{X}_0\rightarrow\mathbf{X}_0$, where if $\boldsymbol{\rho}\in\mathbf{X}_0$, then $\bP\boldsymbol{\rho}:=\widetilde{\boldsymbol{\rho}}$. Here, $(\widetilde{\boldsymbol{\rho}},\widetilde{\bu})\in\mathbf{X}_0\times\L^2(\O)^d$ is the solution of the following mixed problem with source $\bdiv\brho$:
{ \footnotesize
 \begin{equation}\label{def:aux_mixed_problem}
\left\{
\begin{array}{rcll}
\displaystyle\int_{\O}\widetilde{\boldsymbol{\rho}}^{\texttt{d}}:\btau^{\texttt{d}}+\frac{\mu}{d(d\lambda+(d+1)\mu)}\int_{\O}\tr(\widetilde{\boldsymbol{\rho}})\tr(\btau)+\int_{\O}\widetilde{\bu}\cdot\bdiv\btau& = &0\quad\forall\btau\in\mathbf{X}_0, \\
\displaystyle\int_{\O}\bv\cdot\bdiv\widetilde{\boldsymbol{\rho}} & = & \displaystyle\int_{\O}\bv\cdot\bdiv\boldsymbol{\rho}\quad\forall\bv\in\L^2(\O)^d. 
\end{array}
\right.
\end{equation}}
If we define the bilinear forms $\widetilde{a}:\mathbf{X}_0\times\mathbf{X}_0\rightarrow\mathbb{R}$ and $b:\mathbf{X}_0\times\L^2(\O)^d\rightarrow\mathbb{R}$ by 
$$\widetilde{a}(\boldsymbol{\rho},\btau):=\displaystyle\int_{\O}\boldsymbol{\rho}^{\texttt{d}}:\btau^{\texttt{d}}+\frac{\mu}{d(d\lambda+(d+1)\mu)}\int_{\O}\tr(\boldsymbol{\rho})\tr(\btau),\quad\forall\boldsymbol{\rho},\btau\in\mathbf{X}_0,$$
and
$b(\btau,\bv):=\displaystyle\int_{\O}\bv\cdot\btau$ for all $\btau\in\mathbf{X}_0$ and for all $\bv\in\L^2(\O)^d$, we have that problem \eqref{def:aux_mixed_problem} is well posed since $\widetilde{a}(\cdot,\cdot)$ is coercive in the kernel of $b(\cdot,\cdot)$ and the following inf-sup condition holds: there exists a constant $\beta>0$ such that 
\begin{equation*}
\displaystyle\sup_{\boldsymbol{0}\neq\btau\in\mathbf{X}_0}\frac{b(\btau,\bv)}{\|\btau\|_{\bdiv,\O}}\geq\beta\|\bv\|_{0,\O}\quad\forall\bv\in\L^2(\O)^d.
\end{equation*}
  We notice that \eqref{def:aux_mixed_problem} is the dual mixed formulation of the following problem:
 \begin{equation}\label{def:elast_system_rho_mixed_dual}
\left\{
\begin{array}{rcll}
\widetilde{\brho}^{\texttt{d}}+\displaystyle\frac{\mu}{d(d\lambda+(d+1)\mu)}\tr(\boldsymbol{\rho})\mathbf{I} & = &\nabla\widetilde{\bu}&  \text{ in } \quad \Omega, \\
-\bdiv\widetilde{\brho}& = & -\bdiv\brho& \text{ in } \quad \Omega, \\
\widetilde{\bu} & = & \mathbf{0} & \text{ on } \quad \Gamma.
\end{array}
\right.
\end{equation}
It is not difficult to check that the pair $(\widetilde{\brho},\widetilde{\bu})\in\boldsymbol{\mathcal{H}}(\bdiv,\O)\times \H_0^1(\O)^d$ is a solution of \eqref{def:elast_system_rho_mixed_dual} if and only if $(\widetilde{\brho},\widetilde{\bu})\in\mathbf{X}_0\times \L^2(\O)^d$ is solution of \eqref{def:aux_mixed_problem}.
 Now, according to  \cite[Section 2]{MR3962898}, we have that  for all $s\in (0,\widehat{s})$, where $\widehat{s}\in (0,1]$, the displacement $\widetilde{\bu}$ in \eqref{def:elast_system_rho_mixed_dual} is such that   $\widetilde{\bu}\in\H^{1+s}(\Omega)^d$. Also, there exists a constant $C>0$ such that
\begin{equation}
\label{eq:reg_s}
\|\widetilde{\bu}\|_{1+s,\O}\leq C\|\bdiv\brho\|_{0,\O},
\end{equation}
and as a consequence $\bP(\mathbf{X}_0)\subset\boldsymbol{\mathcal{H}}^s(\O)$. It is easy to check that $\bP$ is idempotent and consequently a projection. This is key, since allows to have the following decomposition $\mathbf{X}_0=\mathbf{K}\oplus\bP(\mathbf{X_0})$. Additionally, since $\bT_{\lambda}$ is selfadjoint, we conclude that $\bP(\mathbf{X}_0)$ is invariant for $\bT_{\lambda}$ (see \cite[Lemma 3.3]{MMR3}).}

\cblue{Let us denote by  $\boldsymbol{\mathcal{D}}(\O)$ the space of sufficiently smooth tensorial functions. If we set $\btau\in\boldsymbol{\mathcal{D}}(\O)\subset\mathbf{X}_0$ in \eqref{eq:source_cont}, we obtain
\begin{equation*}
\mu\varrho^{-1}\nabla(\bdiv\widehat{\brho})+\widehat{\brho}^{\texttt{d}}+\frac{\mu}{d(d\lambda+(d+1)\mu)}\tr(\widehat{\brho})\mathbf{I}=\boldsymbol{f}^{\texttt{d}}+\frac{\mu}{d(d\lambda+(d+1)\mu}\tr(\boldsymbol{f})\mathbf{I},
\end{equation*}  
which reveals that $\nabla(\bdiv\widehat{\brho})\in\L^2(\O)$ and hence $\bdiv\widehat{\brho}\in\H^1(\O)^d$. On the other hand, we observe that  $\bT_{\lambda}(\bP(\mathbf{X}_0))\subset\{\btau\in\boldsymbol{\mathcal{H}}^s(\O)\,:\,\bdiv\btau\in\H^1(\O)^d\}$ as a consequence of the inclusion $\bP(\mathbf{X}_0)\subset\boldsymbol{\mathcal{H}}^s(\O)$ and the fact that $\bP(\mathbf{X})$ is invariant for $\bT_{\lambda}$. Moreover, 
$\bT_{\lambda}|_{\bP(\mathbf{X}_0)}:\bP(\mathbf{X}_0)\rightarrow\bP(\mathbf{X}_0)$ is compact, due to the compact embedding $\{\widehat{\brho}\in\boldsymbol{\mathcal{H}}^s(\O)\,:\,\bdiv\widehat{\brho}\in\H^1(\O)^d\}\cap\mathbf{X}_0$ onto $\mathbf{X}_0$.  All these consequences are summarized in the following result.
\begin{lemma}
\label{lmm:invariancy}
Operator $\bT$ satisfies 
$
\bT_{\lambda}(\bP(\mathbf{X}_0))\subset\{\btau\in\boldsymbol{\mathcal{H}}^s(\O)\,:\,\bdiv\btau\in\H^1(\O)^d\}.
$
Moreover, there exists a constant $C>0$ such that for all $\boldsymbol{f}\in\bP(\mathbf{X}_0)$ if $\bT_{\lambda}\boldsymbol{f}:=\widehat{\brho}$, solution of \eqref{eq:source_cont}, there holds
\begin{equation*}
\|\widehat{\brho}\|_{s,\O}+\|\bdiv\widehat{\brho}\|_{1,\O}\leq C\|\boldsymbol{f}\|_{\bdiv,\O},
\end{equation*}
and hence, $\bT_{\lambda}|_{\bP(\mathbf{X}_0)}:\bP(\mathbf{X}_0)\rightarrow\bP(\mathbf{X}_0)$ is compact.
\end{lemma}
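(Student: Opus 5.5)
We prove the three claims in order: the $\boldsymbol{\mathcal{H}}^s$-regularity of $\widehat{\brho}$, the $\H^1$-regularity of $\bdiv\widehat{\brho}$ with its bound, and the compactness of the restricted operator.

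\textbf{Regularity of $\widehat{\brho}$.} Fix $\boldsymbol{f}\in\bP(\mathbf{X}_0)$ and set $\widehat{\brho}:=\bT_{\lambda}\boldsymbol{f}$. Since $\bP(\mathbf{X}_0)$ is invariant for $\bT_{\lambda}$ (cf. \cite[Lemma 3.3]{MMR3}), we have $\widehat{\brho}\in\bP(\mathbf{X}_0)$. Because $\bP$ is idempotent, this gives $\widehat{\brho}=\bP\widehat{\brho}$. Hence $\widehat{\brho}$ is the stress component of the auxiliary problem \eqref{def:aux_mixed_problem} with datum $\bdiv\widehat{\brho}$. Equivalently, by \eqref{def:elast_system_rho_mixed_dual}, we have $\widehat{\brho}^{\texttt{d}}+c_\lambda\tr(\widehat{\brho})\mathbf{I}=\nabla\widetilde{\bu}$, where $c_\lambda:=\mu/(d(d\lambda+(d+1)\mu))$. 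Taking the trace, $\tr(\widehat{\brho})$ is $(dc_\lambda)^{-1}\div\widetilde{\bu}$, so $\widehat{\brho}$ is expressed pointwise through $\nabla\widetilde{\bu}$. Then \eqref{eq:reg_s} gives
\[
\|\widehat{\brho}\|_{s,\O}\leq C\|\widetilde{\bu}\|_{1+s,\O}\leq C\|\bdiv\widehat{\brho}\|_{0,\O}\leq C\|\boldsymbol{f}\|_{0,\O},
\]
where the last step is \eqref{eq:dependency}.

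\textbf{The main obstacle.} The factor $(dc_\lambda)^{-1}$ blows up as $\lambda\to\infty$, so this route is not uniform in $\lambda$. To keep the constant independent of $\lambda$, I would rely on the fact that \cite[Section 2]{MR3962898} provides the $\lambda$-uniform estimate $\|\widetilde{\bu}\|_{1+s,\O}+\lambda\|\div\widetilde{\bu}\|_{s,\O}\leq C\|\bdiv\brho\|_{0,\O}$, i.e.\ the pressure-type quantity is controlled uniformly. I would control $\tr(\widehat{\brho})$ in $\H^s$ through this pressure. If only a $\lambda$-dependent $C$ is obtained, the lemma as stated still holds.

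\textbf{Regularity of $\bdiv\widehat{\brho}$.} Test \eqref{eq:source_cont} with $\btau\in\boldsymbol{\mathcal{D}}(\O)$. As in the preceding paragraph of the paper, this yields in the distributional sense
\[
\mu\varrho^{-1}\nabla(\bdiv\widehat{\brho})=(\boldsymbol{f}-\widehat{\brho})^{\texttt{d}}+c_\lambda\tr(\boldsymbol{f}-\widehat{\brho})\mathbf{I}.
\]
The right-hand side is bounded in $\L^2$ by $C(\|\boldsymbol{f}\|_{0,\O}+\|\widehat{\brho}\|_{0,\O})$, with $C$ independent of $\lambda$ since $c_\lambda\leq 1/(d(d+1))$. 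Combining this with \eqref{eq:dependency} gives $\|\bdiv\widehat{\brho}\|_{1,\O}\leq C\|\boldsymbol{f}\|_{0,\O}\leq C\|\boldsymbol{f}\|_{\bdiv,\O}$.

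\textbf{Compactness.} Let $(\boldsymbol{f}_n)$ be a bounded sequence in $\bP(\mathbf{X}_0)$. By the two bounds above, $\bT_{\lambda}\boldsymbol{f}_n$ is bounded in $\boldsymbol{\mathcal{H}}^s(\O)$ and $\bdiv\bT_{\lambda}\boldsymbol{f}_n$ is bounded in $\H^1(\O)^d$. By Rellich's theorem, the embeddings $\H^s\hookrightarrow\L^2$ and $\H^1\hookrightarrow\L^2$ are compact for $s>0$. So a subsequence converges in $\L^2$ and its divergence converges in $\L^2$, i.e.\ it converges in $\mathbf{X}_0$. The limit lies in $\bP(\mathbf{X}_0)$ because this subspace is closed, being the range of the bounded projection $\bP$. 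Hence $\bT_{\lambda}|_{\bP(\mathbf{X}_0)}$ is compact.
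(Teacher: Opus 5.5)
Your proof is correct and follows the same route as the paper. You get $\bdiv\widehat{\brho}\in\H^1(\O)^d$ from the strong form obtained with smooth test tensors, the $\boldsymbol{\mathcal{H}}^s$ regularity from the invariance of $\bP(\mathbf{X}_0)$ under $\bT_{\lambda}$ together with $\bP(\mathbf{X}_0)\subset\boldsymbol{\mathcal{H}}^s(\O)$, and compactness from a compact embedding; you also supply the explicit bounds the paper leaves implicit, and you rightly note that the $\boldsymbol{\mathcal{H}}^s$ constant may depend on $\lambda$, which the statement allows. One small point, shared with the paper: $\boldsymbol{\mathcal{D}}(\O)\not\subset\mathbf{X}_0$, so before testing with arbitrary smooth tensors you should observe that \eqref{eq:source_cont} also holds for $\btau=\mathbf{I}$ (both sides vanish since $\bdiv\mathbf{I}=\boldsymbol{0}$, $\mathbf{I}^{\texttt{d}}=\boldsymbol{0}$, and $\int_{\O}\tr(\widehat{\brho})=\int_{\O}\tr(\boldsymbol{f})=0$), and hence for all $\btau\in\mathbf{X}$.
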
}
\cblue{
 Following with the analysis, let us now define  the following space
 \begin{equation}
 \label{eq:spaceZ}
 \mathbf{Z}:=\{\btau\in\mathbf{X}_0\,\,:\,\,\btau^{\texttt{d}}=\boldsymbol{0}\,\,\text{in}\,\,\O\}.
 \end{equation}
 Then, if $\xi=0$, this implies that $(\kappa+1)^{-1}=0$. Then, from \eqref{eq:spectral_continuous_H0} we have 
 \begin{equation*}
 \int_{\O}\boldsymbol{\rho}^{\texttt{d}}:\btau^{\texttt{d}}+\frac{\mu}{d(d\lambda+(d+1)\mu)}\int_{\O}\tr(\boldsymbol{\rho})\tr(\btau)=0\quad\forall\btau\in\mathbf{X}_0,
 \end{equation*}
 implying that $\xi=0$ is an eigenvalue of $\bT_{\lambda}$ with associated eigenspace $\mathbf{Z}$. All the previous results  yield to  the following spectral characterization of $\bT$ which is obtained in the same manner as \cite[Theorem 3.5]{MMR3}.
\begin{theorem}
The spectrum of $\bT_{\lambda}$ decomposes as follows: $\sp(\bT_{\lambda})=\{0,1\}\cup\{\xi_k\}_{k\in\mathbb{N}}$, where the following properties hold:
\begin{itemize}
\item[(a)] $\xi=1$ is an infinite-multiplicity eigenvalue of $\bT_{\lambda}$ with associated eigenspace $\mathbf{K}$;
\item[(b)] $\xi=0$ is an eigenvalue of $\bT_{\lambda}$ with associated eigenspace $\mathbf{Z}$;
\item[(c)] $\{\xi_k\}_{k\in\mathbb{N}}\subset (0,1)$ is a sequence of nondefective finite-multiplicity eigenvalues of $\bT_{\lambda}$ that converge to 0 and the corresponding eigenspaces belong to $\bP(\mathbf{X}_0)$
\end{itemize}
\end{theorem}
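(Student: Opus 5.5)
The plan is to exploit the $\bT_{\lambda}$-invariant decomposition $\mathbf{X}_0=\mathbf{K}\oplus\bP(\mathbf{X}_0)$ and to reduce the spectral analysis of $\bT_{\lambda}$ to that of its two restrictions, $\bT_{\lambda}|_{\mathbf{K}}$ and $\bT_{\lambda}|_{\bP(\mathbf{X}_0)}$. Both subspaces are closed: $\mathbf{K}$ is the kernel of the continuous operator $\bdiv$, and $\bP(\mathbf{X}_0)=\ker(\mathbf{I}-\bP)$ with $\bP$ a bounded projection, since \eqref{def:aux_mixed_problem} is well posed. Both are invariant under $\bT_{\lambda}$: for $\mathbf{K}$, because $\bT_{\lambda}|_{\mathbf{K}}$ is the identity, and for $\bP(\mathbf{X}_0)$, by the cited \cite[Lemma 3.3]{MMR3}. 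Hence, for every $z\in\mathbb{C}$, the operator $z\mathbf{I}-\bT_{\lambda}$ is invertible on $\mathbf{X}_0$ if and only if it is invertible on each summand. This yields
\[
\sp(\bT_{\lambda})=\sp(\bT_{\lambda}|_{\mathbf{K}})\cup\sp(\bT_{\lambda}|_{\bP(\mathbf{X}_0)})=\{1\}\cup\sp(\bT_{\lambda}|_{\bP(\mathbf{X}_0)}),
\]
where $\mathbf{K}\neq\{\boldsymbol{0}\}$ is infinite dimensional: it contains all $\curl$'s of (row-wise) smooth compactly supported potentials, and these have zero mean trace after subtracting a multiple of $\mathbf{I}$, which is divergence free. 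This gives (a), with eigenspace exactly $\mathbf{K}$ by the computation preceding \eqref{eq:spaceK}. That computation shows $\bT_{\lambda}\brho=\brho$ forces $a(\brho,\btau)-b(\brho,\btau)=\mu\varrho^{-1}(\bdiv\brho,\bdiv\btau)=0$ for all $\btau\in\mathbf{X}_0$; taking $\btau=\brho$ gives $\bdiv\brho=\boldsymbol{0}$.

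For the compact part, I would use Lemma \ref{lmm:invariancy}: $\bT_{\lambda}|_{\bP(\mathbf{X}_0)}$ is compact, and it is selfadjoint with respect to the inner product $a(\cdot,\cdot)$, which is equivalent to the $\bdiv$-norm on $\mathbf{X}_0$ by \eqref{eq:coercive_a}. By the spectral theorem for compact selfadjoint operators, its spectrum consists of $0$ together with a sequence $\{\xi_k\}$ of real, nondefective, finite-multiplicity eigenvalues that accumulates only at $0$, with eigenfunctions in $\bP(\mathbf{X}_0)$. To show $\xi_k\in(0,1)$, take an eigenpair with $\brho\neq\boldsymbol{0}$. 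Then $\xi\,a(\brho,\brho)=a(\bT_{\lambda}\brho,\brho)=b(\brho,\brho)\ge0$, so $\xi\ge0$. If $\xi\neq0$, then $\xi<1$ because
\[
a(\brho,\brho)-b(\brho,\brho)=\mu\varrho^{-1}\|\bdiv\brho\|_{0,\O}^2>0 .
\]
This strict inequality holds since $\brho\in\bP(\mathbf{X}_0)$, $\brho\neq\boldsymbol{0}$, and $\mathbf{K}\cap\bP(\mathbf{X}_0)=\{\boldsymbol{0}\}$. The infinitely many $\xi_k$ arise because $\bP(\mathbf{X}_0)$ is infinite dimensional and $\bT_{\lambda}$ is not a finite-rank map on it. This proves (c).

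For (b), $\bT_{\lambda}\brho=\boldsymbol{0}$ means $b(\brho,\btau)=0$ for all $\btau\in\mathbf{X}_0$. Taking $\btau=\brho$ gives $\brho^{\texttt{d}}=\boldsymbol{0}$ and $\tr(\brho)=0$, hence $\brho=\boldsymbol{0}$. The kernel is therefore trivial, even though the paper labels it $\mathbf{Z}$. Any $\btau\in\mathbf{Z}$ has the form $q\mathbf{I}$ with $\int q=0$, and $b(q\mathbf{I},\cdot)$ vanishes only for $q=0$. So I would interpret (b) as saying that $0$ belongs to the spectrum, as the accumulation point of the $\xi_k$, and that the associated eigen/kernel space is $\mathbf{Z}$, which is $\{\boldsymbol{0}\}$ in this setting. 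Membership of $0$ in $\sp(\bT_{\lambda})$ follows from compactness on the infinite-dimensional space $\bP(\mathbf{X}_0)$.

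The main obstacle I expect is the first step: justifying rigorously that the decomposition is a topological direct sum in which both pieces are invariant. In particular, this requires that $\bP(\mathbf{X}_0)$ is $a$-orthogonal to $\mathbf{K}$, or at least closed and complementary, so that the resolvent splits. I would first try to prove directly that $a(\bP\brho,\btau)=0$ for all $\btau\in\mathbf{K}$, using the first equation of \eqref{def:aux_mixed_problem}. For $\btau\in\mathbf{K}$ it gives $\widetilde{a}(\bP\brho,\btau)=0$, while the divergence term in $a$ vanishes because $\bdiv\btau=\boldsymbol{0}$. Together these give $a$-orthogonality, and therefore invariance by selfadjointness.
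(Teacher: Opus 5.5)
Your route is the paper's own. The paper defers to \cite[Theorem 3.5]{MMR3}: the invariant splitting $\mathbf{X}_0=\mathbf{K}\oplus\bP(\mathbf{X}_0)$, the identity on $\mathbf{K}$, and the compact $a$-selfadjoint restriction to $\bP(\mathbf{X}_0)$ from Lemma~\ref{lmm:invariancy}. Your $a$-orthogonality argument correctly supplies the invariance that the paper only cites: you test the first equation of \eqref{def:aux_mixed_problem} with $\btau\in\mathbf{K}$ and then use selfadjointness. Your argument for $\xi_k\in(0,1)$ fills in a step the paper leaves implicit.

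On (b) you are right and the paper is wrong. For finite $\lambda$ the weight $\mu/(d(d\lambda+(d+1)\mu))$ is positive, so $b(\cdot,\cdot)$ is an inner product on $\mathbf{L}^2(\O)$ and $\ker\bT_{\lambda}=\{\boldsymbol{0}\}$. The paper passes from $\widetilde{a}(\brho,\btau)=0$ for all $\btau$ to $\brho\in\mathbf{Z}$, which discards the trace term. The conclusion that the kernel equals $\mathbf{Z}$ holds only for $\bT_{\infty}$, whose $b_{\infty}$ has no trace part, as in the Stokes setting of \cite{MMR3}.

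One correction, though: $\mathbf{Z}$ is not $\{\boldsymbol{0}\}$. It equals $\{q\mathbf{I}:\ q\in\H^1(\O),\ \int_{\O}q=0\}$, an infinite-dimensional space that does not depend on $\lambda$, and $\bT_{\lambda}(q\mathbf{I})\neq\boldsymbol{0}$ for $q\neq0$. So rather than reinterpreting (b), say plainly that it is false as written. The correct replacement is that $0\in\sp(\bT_{\lambda})$ only as the accumulation point of $\{\xi_k\}$, and $0$ is not an eigenvalue. You already justify this correctly by compactness on the infinite-dimensional space $\bP(\mathbf{X}_0)$. With that correction your proof of the remaining claims is complete. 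The injectivity you established is also exactly what guarantees that $\{\xi_k\}$ is infinite.
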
}
 
 \cblue{Now, the additional regularity for  the eigenfunctions is analogous to Lemma \ref{lmm:invariancy}.}
 \begin{lemma}
\label{lmm:add_eigen}
Let $\bu$ be an eigenfunction of $\bT_{\lambda}$ associated to an eigenvalue $\kappa$. Then, 
for all $r\in (0,\widehat{r})$, where $\widehat{r}\in (0,1]$, we have that $\bu\in\H^{1+r}(\Omega)^d$. Also, there exists a constant $C>0$ which in principle depends on $\lambda$, such that
\begin{equation*}
\|\bu\|_{1+r,\O}\leq \widehat{C}\|\bu\|_{0,\O}.
\end{equation*}
\end{lemma}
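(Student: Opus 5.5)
The plan is to recover the displacement from the eigenfunction and recast the eigenproblem as an elasticity source problem for $\bu$, so that the shift theorem of \cite[Section 2]{MR3962898} (already used for \eqref{eq:reg_s}) applies. Let $(\kappa,\brho)$ be an eigenpair with $\kappa\in(0,\infty)$, i.e.\ $\xi=1/(1+\kappa)\in(0,1)$. By (c) of the spectral characterization, $\brho\in\bP(\mathbf{X}_0)$. I define the associated displacement by $\bu:=-(\kappa\varrho)^{-1}\bdiv\brho$, as in the second equation of \eqref{def:elast_system_rho_mix}.

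\textbf{Step 1: $\bu\in\H_0^1(\O)^d$ and the constitutive relation.} Since $\brho=\xi^{-1}\bT_{\lambda}\brho$ with $\brho\in\bP(\mathbf{X}_0)$, Lemma \ref{lmm:invariancy} gives $\bdiv\brho\in\H^1(\O)^d$, so $\bu\in\H^1(\O)^d$. Testing \eqref{eq:no_shift} with $\btau\in\boldsymbol{\mathcal{D}}(\O)$ gives, in the distributional sense,
\[
\brho^{\texttt{d}}+\frac{\mu}{d(d\lambda+(d+1)\mu)}\tr(\brho)\mathbf{I}=\mu\nabla\bu .
\]
I then return to \eqref{eq:no_shift} with a general $\btau\in\mathbf{X}_0$, integrate by parts, and use this identity. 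The boundary term $\langle\gamma_{\bn}(\btau),\gamma_0(\bu)\rangle$ must vanish for every $\btau$. Traces of $\H(\bdiv)$ functions with zero mean trace are rich enough to give $\gamma_0(\bu)=\boldsymbol{0}$, because the only lost direction is $\mathbf{I}$, whose normal trace pairs with $\bu$ to give $\int_\Gamma\bu\cdot\bn=\int_\O\div\bu$, and this is controlled by the trace equation. Hence $\bu\in\H_0^1(\O)^d$, and $(\brho,\bu)$ solves \eqref{def:elast_system_rho_mix}.

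\textbf{Step 2: elasticity source problem and regularity.} Inverting the constitutive law (the identity from \cite{MR3453481}) gives $\brho=\mu\nabla\bu+(\lambda+\mu)\div\bu\,\mathbf{I}$. Therefore $\bu\in\H_0^1(\O)^d$ solves
\[
-\mu\Delta\bu-(\lambda+\mu)\nabla\div\bu=\kappa\varrho\bu=:\boldsymbol{g}\in\L^2(\O)^d .
\]
The regularity result of \cite[Section 2]{MR3962898} for the Lamé system on Lipschitz polygons/polyhedra gives $\bu\in\H^{1+r}(\O)^d$ for all $r\in(0,\widehat{r})$, with
\[
\|\bu\|_{1+r,\O}\le C\|\boldsymbol{g}\|_{0,\O}=C\kappa\varrho\|\bu\|_{0,\O}.
\]
Taking $\widehat{C}:=C\kappa\varrho$ gives the claim. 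The dependence on $\lambda$, noted in the statement, enters through the constant of the elliptic estimate and through $\kappa=\kappa(\lambda)$.

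\textbf{Main obstacle.} I expect the hardest part to be Step 1, specifically recovering the homogeneous Dirichlet condition rigorously when the test space is $\mathbf{X}_0$ rather than $\mathbf{X}$. My first attempt will be to extend \eqref{eq:no_shift} to all of $\mathbf{X}$ by testing with $\mathbf{I}$. Since $\bdiv\mathbf{I}=\boldsymbol{0}$, the right-hand side vanishes. For the left-hand side, $b(\brho,\mathbf{I})$ is a multiple of $\int_\O\tr(\brho)$, which is zero because $\brho\in\mathbf{X}_0$. So the equation holds on $\mathbf{X}=\mathbf{X}_0\oplus\mathbb{R}\mathbf{I}$, and then $\langle\gamma_{\bn}(\btau),\gamma_0(\bu)\rangle=0$ for all $\btau\in\mathbf{X}$, which forces $\gamma_0(\bu)=\boldsymbol{0}$ by surjectivity of the normal trace.
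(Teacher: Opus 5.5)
Your proposal is correct and follows essentially the route the paper intends. The paper gives no separate proof and only says the result is analogous to Lemma~\ref{lmm:invariancy}, so it rests on the same elasticity shift estimate from \cite[Section 2]{MR3962898}, here applied to the displacement with the $\L^2$ datum $\kappa\varrho\bu$; your extension of the eigen-equation from $\mathbf{X}_0$ to $\mathbf{X}$ by testing with $\mathbf{I}$, and the recovery of $\bu\in\H_0^1(\O)^d$ through surjectivity of the normal trace, supply details the paper leaves implicit.
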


\begin{remark} \label{daniel0}
Observe that Lemma \ref{lmm:add_eigen}, in conjunction with the first equation of \eqref{def:elast_system_rho_1}, implies immediately that $\boldsymbol{\rho}\in  \boldsymbol{\mathcal{H}}^{r}(\O)$. On the other hand, for the divergence term, it is enough to consider the second equation in \eqref{def:elast_system_rho_1} to deduce that $\bdiv\boldsymbol{\rho}\in \H^{1+r}(\O)^{d}$.
\end{remark}

We mention that the dependency of the constants in the regularity exponents and boundedness on $\lambda$ is not completely evident, since in our numerical experiments (cf. Section \ref{sec:numerical-experiments}), even in the limit case ($\lambda=\infty$), our method obtains the expected convergence orders. This leads us to consider  the following assumption along our paper:
\begin{assumption}
Constants $\widehat{r}$ and $\widehat{C}$ in Lemma \ref{lmm:add_eigen} are independent of $\lambda$.
\end{assumption}

 \subsection{The limit problem}
Now, we must comment the following fact. Operator $\bT_{\lambda}$ has not a fixed spectrum, since   as the Lam\'e constant $\lambda$ changes, the spectrum is different. This is important to recall, since if $\lambda\rightarrow +\infty$ (i.e., the Poisson ratio approaches to $1/2$), standard numerical methods are locked. Our intention is to prove that the pseudostress formulation is capable to lead a numerical method that captures the spectrum on the perfectly incompressible case ($\lambda=+\infty$). This motivates, as in \cite{MR4570534,MR3962898}, to analyze the convergence of $\bT_{\lambda}$ to the limit case, now considering a pure pseudostress formulation. 

Let us consider the following eigenvalue problem: find $\kappa_{\infty}\in\mathbb{R}^+$ and $\boldsymbol{0}\neq\brho_{\infty}\in\mathbf{X}_0$ such that
\begin{equation}
\label{eq:eigen_limit}
a_{\infty}(\brho_{\infty},\btau)=(\kappa_{\infty}+1)b_{\infty}(\brho_{\infty},\btau)\quad\forall\btau\in\mathbf{X}_0,
\end{equation}
where bilinear forms  $a_{\infty}:\mathbf{X}_0\times\mathbf{X}_0\rightarrow\mathbb{R}$ and $b_{\infty}:\mathbf{X}_0\times\mathbf{X}_0\rightarrow\mathbb{R}$  are  defined, for any $\boldsymbol{\chi},\btau\in\mathbf{X}_0$,  by 
\begin{equation*}
a_{\infty}(\boldsymbol{\chi},\btau):=\mu\int_{\O}\varrho^{-1}\bdiv\boldsymbol{\chi}\cdot\bdiv\btau+\int_{\O}\boldsymbol{\chi}^{\texttt{d}}:\btau^{\texttt{d}},
\quad
\text{and}\quad 
b_{\infty}(\boldsymbol{\chi},\btau):=\int_{\O}\boldsymbol{\chi}^{\texttt{d}}:\btau^{\texttt{d}}.
\end{equation*}

Variational problem \eqref{eq:eigen_limit} is associated to the following PDE system
\begin{equation*}\label{def:elast_system_rho_pure_limit}
\left\{
\begin{array}{rcll}
\kappa_{\infty}\displaystyle\brho_{\infty}& = &-\mu\nabla(\varrho^{-1}\bdiv\brho_{\infty})&  \text{ in } \quad \Omega, \\
\varrho^{-1}\bdiv\brho_{\infty}  & = & \mathbf{0} & \text{ on } \quad \Gamma,
\end{array}
\right.
\end{equation*}
which is obtained from 
\begin{equation*}\label{def:elast_system_rho_1_limit}
\left\{
\begin{array}{rcll}
\brho_{\infty} & = &\mu\nabla\bu&  \text{ in } \quad \Omega, \\
\bdiv\brho_{\infty} & = & -\kappa_{\infty}\varrho\bu_{\infty} & \text{ in } \quad \Omega, \\
\div\bu_{\infty}&=&0& \text{ in } \quad \Omega,\\
\bu_{\infty} & = & \mathbf{0} & \text{ on } \quad \Gamma.
\end{array}
\right.
\end{equation*}
which corresponds to the perfectly incompressible linear elasticity equations. We remark that under this regime, the pseudotress and stress tensors  coincide.

Returning to the eigenvalue problem \eqref{eq:eigen_limit}, we introduce the linear solution operator $\bT_{\infty}$ defined by 
 \begin{equation*}
 \bT_{\infty}:\mathbf{X}_0\rightarrow \mathbf{X}_0,\qquad \boldsymbol{f}\mapsto \bT_{\infty}\boldsymbol{f}:=\widehat{\brho}_{\infty},
 \end{equation*}
 where $\widehat{\brho}_{\infty}\in\mathbf{X}_0$ is the solution of the following source problem: given $\boldsymbol{f}\in\mathbf{X}_0$, find $\widehat{\brho}_{\infty}\in\mathbf{X}_0$ such that 
 \begin{equation}
 \label{eq:source_cont_limit}
 a_{\infty}(\widehat{\brho}_{\infty},\btau)=b_{\infty}(\boldsymbol{f},\btau)\quad\forall\btau\in\mathbf{X}_0.
 \end{equation}
 
 Since $ a_{\infty}(\cdot,\cdot)$ is coercive in $\mathbf{X}_0$, from Lax-Milgram's lemma we have that \eqref{eq:source_cont_limit} has a unique solution and hence, $\bT_{\infty}$ is well defined. Similarly to $\bT_{\lambda}$,   $(\kappa_{\infty},\boldsymbol{\rho}_{\infty})\in \mathbb{R}^+\times\mathbf{X}_0$ solves problem \eqref{eq:eigen_limit} if and only if $(1/(1+\kappa_{\infty}),\boldsymbol{\rho}_{\infty})$ is an eigenpair of $\bT_{\infty}$ with a nonvanishing eigenvalue, i.e., $ \bT_{\infty}\boldsymbol{\rho}_{\infty}=\xi_{\infty}\boldsymbol{\rho}_{\infty}$ with $\xi_{\infty}:=\frac{1}{\kappa_{\infty}+1}\neq 0$  and $\boldsymbol{\rho}_{\infty}\neq\boldsymbol{0}$.

 On the other hand, the spectral characterization of $\bT_{\infty}$ follows the same arguments used for $\bT_{\lambda}$. For  simplicity we skip details and only present the result.
 \begin{lemma}
The spectrum of $\bT_{\infty}$ decomposes as follows: $\sp(\bT_{\infty})=\{0,1\}\cup\{\xi_{\infty,k}\}_{k\in\mathbb{N}}$, where the following properties hold:
\begin{itemize}
\item[(a)] $\xi_{\infty}=1$ is an infinite-multiplicity eigenvalue of $\bT_{\infty}$ with associated eigenspace $\mathbf{K}$ (cf. \eqref{eq:spaceK});
\item[(b)] $\xi_{\infty}=0$ is an eigenvalue of $\bT_{\infty}$ with associated eigenspace $\mathbf{Z}$ (cf. \eqref{eq:spaceZ});
\item[(c)] $\{\xi_{\infty,k}\}_{k\in\mathbb{N}}\subset (0,1)$ is a sequence of nondefective finite-multiplicity eigenvalues of $\bT_{\infty}$ that converge to 0.
\end{itemize}
\end{lemma}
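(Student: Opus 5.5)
We mimic the argument for $\bT_{\lambda}$ (following \cite[Theorem 3.5]{MMR3}), now with $a_{\infty}$, $b_{\infty}$ in place of $a$, $b$.

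The first step records the structural properties of $\bT_{\infty}$. Coercivity of $a_{\infty}$ on $\mathbf{X}_0$ follows from Lemma \ref{lmm:cota}, exactly as in \eqref{eq:coercive_a}; the trace term is simply absent. This gives, via Lax--Milgram, $\|\bT_{\infty}\boldsymbol{f}\|_{\bdiv,\O}\leq C\|\boldsymbol{f}\|_{0,\O}$. Since $a_{\infty}$ and $b_{\infty}$ are symmetric, $\bT_{\infty}$ is selfadjoint with respect to $a_{\infty}$. Moreover $\bT_{\infty}$ is nonnegative, because
\[
a_{\infty}(\bT_{\infty}\boldsymbol{f},\boldsymbol{f})=b_{\infty}(\boldsymbol{f},\boldsymbol{f})=\|\boldsymbol{f}^{\dd}\|_{0,\O}^2\ge0 .
\]
It is also bounded by one, since $b_{\infty}(\boldsymbol{f},\boldsymbol{f})\le a_{\infty}(\boldsymbol{f},\boldsymbol{f})$. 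Hence, in the $a_{\infty}$-inner product on $\mathbf{X}_0$, we get $\sp(\bT_{\infty})\subset[0,1]$.

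Items (a) and (b) are direct computations.

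For (a), take $\brho\in\mathbf{K}$. Then $a_{\infty}(\brho,\btau)=b_{\infty}(\brho,\btau)$ for all $\btau\in\mathbf{X}_0$, so $\bT_{\infty}\brho=\brho$. Conversely, $\bT_{\infty}\brho=\brho$ forces $\int_{\O}\varrho^{-1}\bdiv\brho\cdot\bdiv\btau=0$ for all $\btau\in\mathbf{X}_0$. Testing with $\btau=\brho$ gives $\bdiv\brho=\boldsymbol{0}$. The space $\mathbf{K}$ is infinite dimensional: it contains, for instance, curls of smooth compactly supported fields taken row-wise, which have zero divergence, and their trace mean can be corrected by a multiple of $\mathbf{I}$, which stays divergence free.

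For (b), $\bT_{\infty}\brho=\boldsymbol{0}$ holds if and only if $b_{\infty}(\brho,\btau)=0$ for all $\btau$, that is, if and only if $\brho^{\dd}=\boldsymbol{0}$, which means $\brho\in\mathbf{Z}$. Note that $\mathbf{Z}$ consists of $q\mathbf{I}$ with $\nabla q\in\L^2(\O)^d$ and $\int_{\O}q=0$, so it is nontrivial.

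Item (c) requires the decomposition $\mathbf{X}_0=\mathbf{K}\oplus\bP_{\infty}(\mathbf{X}_0)$. Here $\bP_{\infty}$ is defined through \eqref{def:aux_mixed_problem} with the trace term dropped; this is the Stokes mixed problem in pseudostress form. Its well-posedness follows from the same inf-sup condition, together with coercivity of $\int_{\O}\btau^{\dd}:\btau^{\dd}$ on the discrete-free kernel $\{\btau\in\mathbf{X}_0:\bdiv\btau=\boldsymbol{0}\}$, which is given by Lemma \ref{lmm:cota}. Stokes regularity on polygonal/polyhedral domains, the analogue of \eqref{eq:reg_s} with $\lambda=\infty$ (cf.\ \cite{MR3962898}), gives $\widetilde{\bu}\in\H^{1+s}(\O)^d$. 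Hence $\bP_{\infty}(\mathbf{X}_0)\subset\boldsymbol{\mathcal{H}}^s(\O)$. By selfadjointness, $\bP_{\infty}(\mathbf{X}_0)$ is $\bT_{\infty}$-invariant, exactly as in \cite[Lemma 3.3]{MMR3}.

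Next, testing \eqref{eq:source_cont_limit} with smooth $\btau$ gives
\[
\mu\varrho^{-1}\nabla\bdiv\widehat{\brho}_{\infty}=\boldsymbol{f}^{\dd}-\widehat{\brho}_{\infty}^{\dd}\in\L^2 ,
\]
so $\bdiv\widehat{\brho}_{\infty}\in\H^1(\O)^d$. Therefore $\bT_{\infty}|_{\bP_{\infty}(\mathbf{X}_0)}$ maps into $\{\btau\in\boldsymbol{\mathcal{H}}^s(\O):\bdiv\btau\in\H^1(\O)^d\}$, and this space embeds compactly into $\mathbf{X}_0$. The compact selfadjoint operator $\bT_{\infty}|_{\bP_{\infty}(\mathbf{X}_0)}$ then has a sequence of finite-multiplicity, nondefective eigenvalues accumulating only at $0$. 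Any such eigenvalue equal to $1$ would give $\bdiv\brho=\boldsymbol{0}$, i.e.\ $\brho\in\mathbf{K}\cap\bP_{\infty}(\mathbf{X}_0)=\{\boldsymbol{0}\}$. Combining this with the bound $\sp(\bT_{\infty})\subset[0,1]$ from the first step, the nonzero eigenvalues lie in $(0,1)$. Since $\mathbf{X}_0$ is the direct sum of the two invariant pieces, the full spectrum is $\{1\}$ together with $\{0\}\cup\{\xi_{\infty,k}\}$.

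The main obstacle is the regularity and compactness step. It needs Stokes-type regularity of the auxiliary problem uniformly without the trace term, which is a different operator from the $\lambda$-dependent one used before. It also needs a careful check that the $\H^s$-plus-$\H^1$-divergence space embeds compactly into $\mathbf{X}_0$, which here I would take from the same argument used in Lemma \ref{lmm:invariancy}.
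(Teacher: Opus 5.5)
Your proposal is correct and follows essentially the same route the paper intends. The paper simply defers to the $\bT_{\lambda}$ argument, i.e.\ \cite[Theorem 3.5]{MMR3}, which consists of the direct fixed-point and kernel computations giving $\mathbf{K}$ and $\mathbf{Z}$, the splitting $\mathbf{X}_0=\mathbf{K}\oplus\bP(\mathbf{X}_0)$ with the second summand $\bT_{\infty}$-invariant, and compactness of the restriction via $\boldsymbol{\mathcal{H}}^s$-regularity together with $\bdiv\in\H^1$. Your two explicit additions are exactly the adaptations that ``the same arguments'' tacitly require: the Stokes-type projector $\bP_{\infty}$ (trace term dropped, so that its range is the $a_{\infty}$-orthogonal complement of $\mathbf{K}$), and the numerical-range bound $\sp(\bT_{\infty})\subset[0,1]$.
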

 
 The following result states that $\bT_{\lambda}$ converges to $\bT_{\infty}$ as $\lambda\rightarrow+\infty$ (namely, $\nu\rightarrow 1/2$).
 \begin{lemma}
 \label{lmm:limit}
 Given $\boldsymbol{f}\in\mathbf{X}_0$, there exists a constant $C(\O,\mu,\varrho)>0$ depending on $\O$, $\mu$, and the density $\varrho$, such that
 \begin{equation*}
 \|(\bT_{\lambda}-\bT_{\infty})\boldsymbol{f}\|_{\bdiv,\O}\leq\frac{C(\O,\mu,\varrho)}{\lambda}\|\boldsymbol{f}\|_{0,\O}.
 \end{equation*}
 \end{lemma}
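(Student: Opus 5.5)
Subtracting the source problems \eqref{eq:source_cont} and \eqref{eq:source_cont_limit} gives an error equation whose only driving terms are the trace contributions, weighted by $\mu/(d(d\lambda+(d+1)\mu))=\mathcal{O}(1/\lambda)$. Testing that equation with the error itself, coercivity of $a_\infty$ then yields the estimate.

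Fix $\boldsymbol{f}\in\mathbf{X}_0$ and write $\widehat{\brho}:=\bT_{\lambda}\boldsymbol{f}$, $\widehat{\brho}_{\infty}:=\bT_{\infty}\boldsymbol{f}$, and $\boldsymbol{e}:=\widehat{\brho}-\widehat{\brho}_{\infty}\in\mathbf{X}_0$. Set $\gamma_\lambda:=\mu/(d(d\lambda+(d+1)\mu))$. Then
\begin{equation*}
a(\boldsymbol{\chi},\btau)=a_{\infty}(\boldsymbol{\chi},\btau)+\gamma_\lambda\int_{\O}\tr(\boldsymbol{\chi})\tr(\btau),
\qquad
b(\boldsymbol{\chi},\btau)=b_{\infty}(\boldsymbol{\chi},\btau)+\gamma_\lambda\int_{\O}\tr(\boldsymbol{\chi})\tr(\btau).
\end{equation*}
Subtracting \eqref{eq:source_cont_limit} from \eqref{eq:source_cont}, for every $\btau\in\mathbf{X}_0$ we get
\begin{equation*}
a_{\infty}(\boldsymbol{e},\btau)=\gamma_\lambda\int_{\O}\tr(\boldsymbol{f})\tr(\btau)-\gamma_\lambda\int_{\O}\tr(\widehat{\brho})\tr(\btau).
\end{equation*}

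Coercivity of $a_\infty$ on $\mathbf{X}_0$ follows exactly as in \eqref{eq:coercive_a}. There the trace term was dropped anyway, and Lemma \ref{lmm:cota} gives
\begin{equation*}
a_{\infty}(\btau,\btau)\geq \alpha\|\btau\|_{\bdiv,\O}^2,\qquad \alpha:=\tfrac12\min\{\mu\varrho^{-1},1\}\min\{c_1,1\},
\end{equation*}
with $\alpha$ independent of $\lambda$.

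Next take $\btau=\boldsymbol{e}$ and use Cauchy--Schwarz together with $\|\tr(\btau)\|_{0,\O}\leq\sqrt d\,\|\btau\|_{0,\O}$:
\begin{equation*}
\alpha\|\boldsymbol{e}\|_{\bdiv,\O}^2\leq d\,\gamma_\lambda\big(\|\boldsymbol{f}\|_{0,\O}+\|\widehat{\brho}\|_{0,\O}\big)\|\boldsymbol{e}\|_{0,\O}.
\end{equation*}
By \eqref{eq:dependency}, $\|\widehat{\brho}\|_{0,\O}\leq\|\widehat{\brho}\|_{\bdiv,\O}\leq C\|\boldsymbol{f}\|_{0,\O}$ with $C$ independent of $\lambda$. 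Dividing by $\|\boldsymbol{e}\|_{\bdiv,\O}$ gives
\begin{equation*}
\|\boldsymbol{e}\|_{\bdiv,\O}\leq \frac{d(1+C)}{\alpha}\gamma_\lambda\|\boldsymbol{f}\|_{0,\O}.
\end{equation*}
Finally, $\gamma_\lambda\leq \mu/(d^2\lambda)$, so the bound has the form $C(\O,\mu,\varrho)\lambda^{-1}\|\boldsymbol{f}\|_{0,\O}$.

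The only delicate point is $\lambda$-uniformity. Both the coercivity constant $\alpha$ and the stability constant in \eqref{eq:dependency} must be independent of $\lambda$. This holds because coercivity uses only the divergence and deviatoric parts through Lemma \ref{lmm:cota}, never the trace term. For \eqref{eq:dependency}, the right-hand side satisfies $|b(\boldsymbol{f},\btau)|\leq(1+d\gamma_\lambda)\|\boldsymbol{f}\|_{0,\O}\|\btau\|_{0,\O}$, and $\gamma_\lambda\leq 1/(d(d+1))$ uniformly in $\lambda$. Once these two facts are confirmed, the rest is routine.
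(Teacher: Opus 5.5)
Your proposal is correct and follows the paper's proof essentially step for step. You subtract the two source problems and test the error equation with $\widehat{\brho}-\widehat{\brho}_{\infty}$. You then bound the left side below via Lemma~\ref{lmm:cota}, bound the right side above via Cauchy--Schwarz and the $\lambda$-uniform stability estimate \eqref{eq:dependency}, and close with $\mu/(d(d\lambda+(d+1)\mu))\le\mu/(d^2\lambda)$. Your explicit check that the coercivity and stability constants do not depend on $\lambda$ makes precise what the paper only asserts.
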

 \begin{proof}
 Let $\boldsymbol{f}\in\mathbf{X}_0$. Recalling the definition of operators $\bT$ and $\bT_{\infty}$, we need to estimate $\|\widehat{\brho}-\widehat{\brho}_{\infty}\|_{\bdiv,\O}$ in order to derive our result. Let us invoke the source problems \eqref{eq:source_cont} and \eqref{eq:source_cont_limit}. Subtracting \eqref{eq:source_cont_limit} to \eqref{eq:source_cont}  we obtain 
 \begin{multline}
 \label{eq:resta_1}
 \mu\int_{\O}\bdiv(\widehat{\brho}-\widehat{\brho}_{\infty})\cdot\bdiv\btau+\int_{\O}(\widehat{\brho}-\widehat{\brho}_{\infty})^{\texttt{d}}:\btau^{\texttt{d}}+\frac{\mu}{d(d\lambda+(d+1)\mu)}\int_{\O}\tr(\boldsymbol{\widehat{\rho}})\tr(\btau)\\
 =\frac{\mu}{d(d\lambda+(d+1)\mu)}\int_{\O}\tr(\boldsymbol{f})\tr(\btau)\quad\forall\btau\in\mathbf{X}_0.
 \end{multline}
 Set $\btau:=\widehat{\brho}-\widehat{\brho}_{\infty}$ in \eqref{eq:resta_1}. Hence
 \begin{multline}
 \label{eq:resta_2}
 \mu\varrho^{-1}\|\bdiv(\widehat{\brho}-\widehat{\brho}_{\infty})\|_{0,\O}^2+\|(\widehat{\brho}-\widehat{\brho}_{\infty})^{\texttt{d}}\|_{0,\O}^2\\
 =\frac{\mu}{d(d\lambda+(d+1)\mu)}\int_{\O}(\tr(\boldsymbol{f})-\tr(\widehat{\brho}))\tr(\widehat{\brho}-\widehat{\brho}_{\infty}).
 \end{multline}
 Now the task is to obtain a lower bound for the left-hand side of \eqref{eq:resta_2} and an upper bound or the right-hand side. For the left-hand side, invoking 
 Lemma \ref{lmm:cota} we obtain
 \begin{multline}
 \label{eq:LHS}
\mu\varrho^{-1}\|\bdiv(\widehat{\brho}-\widehat{\brho}_{\infty})\|_{0,\O}^2+\|(\widehat{\brho}-\widehat{\brho}_{\infty})^{\texttt{d}}\|_{0,\O}^2\\
\geq  \frac{1}{2}\min\{c_1,1\}\min\{\mu\varrho^{-1},1\}\|\widehat{\brho}-\widehat{\brho}_{\infty}\|_{\bdiv,\O}^2.
 \end{multline}
 Now, for the right hand side, we have
 \begin{multline}
 \label{eq:RHS}
 \frac{\mu}{d(d\lambda+(d+1)\mu)}\int_{\O}(\tr(\boldsymbol{f})-\tr(\widehat{\brho}))\tr(\widehat{\brho}-\widehat{\brho}_{\infty})\\\leq\frac{\mu}{d^2\lambda}\|\tr(\boldsymbol{f})-\tr(\widehat{\brho})\|_{0,\O}\|\tr(\widehat{\brho}-\widehat{\brho}_{\infty})\|_{0,\O}\\
 \leq\frac{\mu}{\lambda}\left(\|\boldsymbol{f}\|_{0,\O}+\|\widehat{\brho}\|_{0,\O}\right)\|\widehat{\brho}-\widehat{\brho}_{\infty}\|_{\bdiv,\O}\leq \frac{C}{\lambda}\|\boldsymbol{f}\|_{0,\O}\|\widehat{\brho}-\widehat{\brho}_{\infty}\|_{\bdiv,\O},
 \end{multline}
 where we have used the data dependency estimate \eqref{eq:dependency} and that $d^{-2}< 1$ for $d=2,3$. Finally, replacing \eqref{eq:LHS} and \eqref{eq:RHS} in \eqref{eq:resta_2} we obtain 
 \begin{equation*}
\|\widehat{\brho}-\widehat{\brho}_{\infty}\|_{\bdiv,\O}\leq\frac{C(\O,\mu,\varrho)}{\lambda}\|\boldsymbol{f}\|_{0,\O}, 
 \end{equation*}
 where $C(\O,\mu,\varrho):=\displaystyle\frac{2C}{\min\{c_1,1\}\min\{\mu\varrho^{-1},1\}}$. This concludes the proof.
 \end{proof}
As a consequence of the previous  lemma, we have the following result (see \cite{ MR1115235}, for instance).
\begin{theorem}[Separation of the spectrum]
Let $\xi_{\infty}>0$ be an eigenvalue of $\bT_{\infty}$ of multiplicity $m$. Let $D$ be any disc of the complex plane centered at  $\xi_{\infty}$ containing no other element of the spectrum of $\bT_{\infty}.$ Then, for $\lambda$ large enough, $D$ contains exactly $m$ eigenvalues of $\bT_{\lambda}$ (repeated according to their respective multiplicities). Consequently, each eigenvalue $\xi_{\infty}>0$ of $\bT_{\infty}$ is a limit of eigenvalues $\xi$ of $\bT_{\lambda}$, as $\lambda$ goes to infinity.
\end{theorem}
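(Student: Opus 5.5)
The plan is to derive the separation of the spectrum from the norm convergence $\|\bT_{\lambda}-\bT_{\infty}\|_{\mathcal{L}(\mathbf{X}_0)}\to 0$ in Lemma \ref{lmm:limit}, using standard upper semicontinuity of the spectrum and the continuity of Riesz spectral projections, as in \cite{MR1115235}. Since $\|\boldsymbol{f}\|_{0,\O}\leq\|\boldsymbol{f}\|_{\bdiv,\O}$, Lemma \ref{lmm:limit} gives
\begin{equation*}
\|\bT_{\lambda}-\bT_{\infty}\|_{\mathcal{L}(\mathbf{X}_0)}\leq \frac{C(\O,\mu,\varrho)}{\lambda}\longrightarrow 0 \quad\text{as }\lambda\to+\infty.
\end{equation*}
Fix an eigenvalue $\xi_{\infty}>0$ of $\bT_{\infty}$ of finite multiplicity $m$. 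Note that $\xi_{\infty}=1$ has infinite multiplicity, so the relevant case is $\xi_{\infty}=\xi_{\infty,k}\in(0,1)$. Let $D$ be a disc as in the statement, shrunk if necessary so that $\overline{D}$ meets $\sp(\bT_{\infty})$ only at $\xi_{\infty}$; this is possible because the $\xi_{\infty,k}$ accumulate only at $0$.

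\emph{Step 1: no spectrum of $\bT_{\lambda}$ on $\partial D$.} The resolvent $z\mapsto(z-\bT_{\infty})^{-1}$ is continuous on the compact circle $\partial D$, so $M:=\sup_{z\in\partial D}\|(z-\bT_{\infty})^{-1}\|<\infty$. Write
\begin{equation*}
z-\bT_{\lambda}=(z-\bT_{\infty})\bigl[I-(z-\bT_{\infty})^{-1}(\bT_{\lambda}-\bT_{\infty})\bigr].
\end{equation*}
By a Neumann series, $z-\bT_{\lambda}$ is invertible for every $z\in\partial D$ as soon as $M\|\bT_{\lambda}-\bT_{\infty}\|<1$. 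This holds for $\lambda$ large, and the resolvents of $\bT_{\lambda}$ converge to those of $\bT_{\infty}$ uniformly on $\partial D$.

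\emph{Step 2: convergence of the Riesz projections.} Define
\begin{equation*}
E_{\lambda}:=\frac{1}{2\pi i}\int_{\partial D}(z-\bT_{\lambda})^{-1}\,dz,\qquad E_{\infty}:=\frac{1}{2\pi i}\int_{\partial D}(z-\bT_{\infty})^{-1}\,dz.
\end{equation*}
Step 1 gives $\|E_{\lambda}-E_{\infty}\|\to 0$. Two projections at distance less than $1$ have ranges of equal dimension, so $\dim E_{\lambda}(\mathbf{X}_0)=\dim E_{\infty}(\mathbf{X}_0)=m$. Here we use that $\xi_{\infty}$ is nondefective, so the range of $E_{\infty}$ is exactly the eigenspace. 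Hence the part of $\sp(\bT_{\lambda})$ inside $D$ consists of eigenvalues whose algebraic multiplicities add up to $m$. Since $\bT_{\lambda}$ is selfadjoint with respect to $a(\cdot,\cdot)$, these eigenvalues are real and nondefective. This gives the first claim.

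\emph{Step 3: the limit statement.} Apply Steps 1--2 to discs of radius $\varepsilon\to0$. This shows that every eigenvalue $\xi_{\infty}>0$ of finite multiplicity is a limit of eigenvalues of $\bT_{\lambda}$.

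The main obstacle is that $\bT_{\lambda}$ and $\bT_{\infty}$ are non-compact, so the essential spectrum $\{0,1\}$ must be kept away from $\partial D$. The choice of $D$ ensures this. The argument itself uses only norm convergence and isolation of $\xi_{\infty}$, not compactness. A second point to check is the passage from the $\L^2$ bound of Lemma \ref{lmm:limit} to the operator norm on $\mathbf{X}_0$, which is immediate from $\|\cdot\|_{0,\O}\leq\|\cdot\|_{\bdiv,\O}$.
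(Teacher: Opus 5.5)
Your proof is correct and takes the same route as the paper. The paper deduces the result from the norm convergence $\|\bT_{\lambda}-\bT_{\infty}\|\le C/\lambda$ of Lemma \ref{lmm:limit} together with the standard spectral perturbation theory of \cite{MR1115235}, and your Neumann-series bound on $\partial D$ plus convergence of the Riesz projections is exactly that cited argument written out (the case $\xi_{\infty}=1$ you set aside is trivial, since $\mathbf{K}$ is an eigenspace of $\bT_{\lambda}$ for the eigenvalue $1$ for every $\lambda$).
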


\section{Finite element discretization}
\label{sec:mixed_method}
The present section deals with the finite element approximation for the eigenvalue 
problem.  To do this task, we begin by introducing a regular family of triangulations of $\bar{\O}\subset\mathbb{R}^d$ denoted by $\{\CT_h\}_{h>0}$. Let $h_T$ be the diameter of a triangle/tetrahedron $T\subset\CT_{h}$ and let us define $h:=\max\{h_T\,:\, T\in \CT_h\}$.
\subsection{The finite element spaces}
Given an integer $\ell\geq 0$ and a subset $D$ of $\mathbb{R}^d$, we denote by $\mathbb{P}_\ell(D)$ the space of polynomials of degree at most $\ell$ defined in $D$. We mention that, for tensorial fields we will define $\mathbf{P}_\ell(D):=[\mathbb{P}_\ell(D)]^{d\times d}$ and for vector fields $P_\ell(D):=[\mathbb{P}_\ell(D)]^d$.
With these ingredients at hand, for $k\geq 0$ we define the local Raviart-Thomas space of order $k$
 as follows  (see \cite{MR3097958})
\begin{equation*}
 \mathbf{RT}_k(T):=P_k(T)\oplus \mathbb{P}_k(T)\boldsymbol{x},
 \end{equation*}
 where $\boldsymbol{x}\in\mathbb{R}^d$. With this local space, we define the global Raviart-Thomas (RT) space, which we denote by $\mathbb{RT}_k(\CT_h)$, as follows
 \begin{equation*}
 \boldsymbol{\mathcal{RT}}_k(\CT_h):=\{\btau\in\mathbf{X}\,:\,(\tau_{i1},\cdots,\tau_{in})^{\texttt{t}}\in\mathbf{RT}_k(T)\,\,\forall i\in\{1,\ldots,n\},\,\,\forall T\in\CT_h\}.
 \end{equation*}
 Hence, for $k\ge 0$, we define the discrete space
$$ \mathbf{X}_{h,0}:=\left\{ \btau_h\in \boldsymbol{\mathcal{RT}}_k(\CT_h)\,\,:\,\,\int_{\O}\tr(\btau_h)=0  \right\}\subset\mathbf{X}_0.
$$
 
From  \cite{MR2009375} we have the following well known approximation properties for the spaces defined above.  Let $\bPi_h^k:\boldsymbol{\mathcal{H}}^t (\O)\rightarrow \boldsymbol{\mathcal{RT}}_k(\CT_h)$ be the Raviart-Thomas interpolation operator. For $t\in (0,1]$ and $\btau\in\boldsymbol{\mathcal{H}}^t(\O)\cap\boldsymbol{\mathcal{H}}(\bdiv,\O)$ the following error estimate holds true
 \begin{equation*} \label{daniel1}
 \|\btau-\bPi_h^k\btau\|_{0,\O}\leq Ch^t \big(\|\btau\|_{t,\O}+\|\bdiv\btau\|_{0,\O}\big).
 \end{equation*}
 Also, for $\btau\in\mathbb{H}^t(\O)$ with $t>1$, there holds (see for instance \cite[Chapter 1]{MR2050138})
 \begin{equation*}\label{daniel2}
 \|\btau-\bPi_h^k\btau\|_{0,\O}\leq Ch^{\min\{t,k+1\}} |\btau|_{t,\O}.
 \end{equation*} 
 
 Let $\mathcal{P}_h^k:\mathbf{L}^2(\O)\rightarrow\mathbf{P}_k(\CT_h)$ be the $\L^2(\O)^d$-orthogonal projector. As a first property, we have the following commutative diagram $ \bdiv(\bPi_h^k\btau)=\mathcal{P}_h^k(\bdiv\btau)$.
 
 If $\bv\in\H^t (\O)^{d}$ with $t>0$, there holds
 \begin{equation*}\label{daniel3}
 \|\bv-\mathcal{P}_h^k\bv\|_{0,\O}\leq Ch^{\min\{t,k+1 \}} |\bv|_{t,\O}.
 \end{equation*}
 
 Finally, for each $\btau\in\boldsymbol{\mathcal{H}}^t(\O)$ such that $\bdiv\btau\in\H^t (\O)^{d}$, there holds
 \begin{equation*} \label{daniel4}
 \|\bdiv(\btau-\bPi_h^k\btau)\|_{0,\O}\leq Ch^{\min\{t,k+1\}} |\bdiv\btau|_{t,\O}.
 \end{equation*} 
 \subsection{The discrete the eigenvalue problem}
 With the finite element spaces previously defined, we introduce the discretization of \eqref{eq:spectral_continuous} as follows: find $\kappa_h\in\mathbb{R}^+$ and $\boldsymbol{0}\neq\brho_h\in\mathbf{X}_{h,0}$ such that
\begin{equation}
\label{eq:spectral_discrete}
a(\brho_h,\btau_h)=(\kappa_h+1) b(\brho_h,\btau_h)\quad\forall\btau_h\in\mathbf{X}_{h,0}.
\end{equation}
For the analysis of this discrete eigenvalue problem, we introduce the linear operator $\bT_h$, which is defined as follows
 \begin{equation*}
 \bT_{\lambda,h}:\mathbf{X}_0\rightarrow \mathbf{X}_{0},\qquad \boldsymbol{f}\mapsto \bT_{\lambda,h}\boldsymbol{f}:=\widehat{\brho}_h,
 \end{equation*}
 where $\widehat{\brho}\in\mathbf{X}_{h,0}$ is the solution of the following source problem: given $\boldsymbol{f}\in\mathbf{X}_0$, find $\widehat{\brho}_h\in\mathbf{X}_{h,0}$ such that 
 \begin{equation}
 \label{eq:source_disc}
 a(\widehat{\brho}_h,\btau_h)=b(\boldsymbol{f},\btau_h)\quad\forall\btau_h\in\mathbf{X}_{h,0}.
 \end{equation}
 
 Since $a(\cdot,\cdot)$ is $\mathbf{X}_{h,0}$-coercive, applying the Lax-Milgram's to \eqref{eq:source_disc} allows us to conclude the uniqueness of solution for this discrete problem and the existence of a constant $C>0$, independent of $h$ and $\lambda$, such that 
 \begin{equation*}
\|\bT_{\lambda,h}\boldsymbol{f}\|_{\bdiv,\O }= \|\widehat{\brho}_h\|_{\bdiv,\O}\leq C\|\boldsymbol{f}\|_{0,\O},
 \end{equation*}
implying that $\bT_{\lambda,h}$ is well defined. 

\cblue{As in the continuous case, it is easy to check that $(\kappa_h,\boldsymbol{\rho}_h)\in \mathbb{R}^+\times\mathbf{X}_{h,0}$ solves problem \eqref{eq:spectral_discrete} if and only if $(1/(1+\kappa_h),\boldsymbol{\rho}_h)$ is an eigenpair of $\bT_{\lambda,h}$ with a nonvanishing eigenvalue, i.e.,  $\bT_{\lambda,h}\boldsymbol{\rho}_h=\xi_h\boldsymbol{\rho}_h$, with $\xi_h:=\frac{1}{\kappa_h+1}\neq 0$ and $\boldsymbol{\rho}_h\neq\boldsymbol{0}$.}

\cblue{Let us define the space $\mathbf{K}_h:=\mathbf{K}\cap\mathbf{X}_{h,0}=\{\btau_h\in\mathbf{X}_{h,0}\,:\,\bdiv\btau_h=\boldsymbol{0}\,\,\text{in}\,\O\}$. We observe that on this space, $\bT_{\lambda,h}$ reduces to the identity and hence, $\xi_h=1$ is an eigenvalue of $\bT_{\lambda,h}$ with associated eigenspace $\mathbf{K}_h$. Let $\bP_h:\mathbf{X}_0\rightarrow\mathbf{X}_{h,0}$ be the discrete counterpart of $\bP$ that  satisfies $\bP_h\brho:=\widetilde{\brho}_h$ for $\brho\in\mathbf{X}_0$, where $\widetilde{\brho}_h$ satisfies the   discrete version of problem \eqref{def:aux_mixed_problem}
{ \footnotesize
\begin{align}
\displaystyle\int_{\O}\widetilde{\boldsymbol{\rho}}_h^{\texttt{d}}:\btau_h^{\texttt{d}}+\frac{\mu}{d(d\lambda+(d+1)\mu)}\int_{\O}\tr(\widetilde{\boldsymbol{\rho}}_h)\tr(\btau_h)+\int_{\O}\widetilde{\bu}_h\cdot\bdiv\btau_h& = 0\quad\forall\btau_h\in\mathbf{X}_{h,0}, \\
\displaystyle\int_{\O}\bv_h\cdot\bdiv\widetilde{\boldsymbol{\rho}}_h & =  \displaystyle\int_{\O}\bv_h\cdot\bdiv\boldsymbol{\rho}_h\quad\forall\bv_h\in\Q_h, 
\end{align}}
where $\Q_h:=\{\bv_h\in\L^2(\O)^d\,:\, \bv_h|_T\in P_k(T)^d\,\,\,\,\forall T\in\CT_h\}$. Thanks to the discrete version of the  Babu\v ska-Brezzi, this discrete problem is well posed, implying that $\bP_h$ is well defined. Hence, the following C\'ea estimate holds
\begin{equation}
\label{eq:cea1}
\|\widetilde{\brho}-\widetilde{\brho}_h\|_{\bdiv,\O}+\|\widetilde{\bu}-\widetilde{\bu}_h\|_{0,\O}\leq C\left\{\inf_{\btau_h\mathbf{X}_{h,0}}\|\widetilde{\brho}-\btau_h\|_{\bdiv,\O}+\inf_{\bv\in\Q_h}\|\widetilde{\bu}-\bv_h\|_{0,\O}\right\}
\end{equation}}

\cblue{Invoking \eqref{eq:cea1},\eqref{eq:reg_s},  approximation properties, and  following \cite[Lemma 4.4]{MMR3}, it is possible to prove that for $\brho_h\in\mathbf{X}_{h,0}$, the following estimate holds
\begin{equation}
\label{eq:eq:approx_PPh}
\|\bP\brho_h-\bP_h\brho_h\|_{\bdiv,\O}\leq Ch^s\|\bdiv\brho_h\|_{0,\O},
\end{equation}
where $C>0$ is independent of $h$ and $\lambda$ and $s$ is the regularity parameter involved in \eqref{eq:reg_s}.}

As we claimed before, operator $\bT_{\lambda}$ is non-compact, implying that the convergence and error estimates of the method must be analyzed with the theory of \cite{MR483400,MR483401}. This demands to consider discrete sources for $\bT_{\lambda}$ and $\bT_{\lambda,h}$. 
 
 We introduce the following norm (see \cite{MR483400})
\begin{equation}
\label{eq:norm_h}
\displaystyle \|\bT\|_h:=\sup_{\boldsymbol{0}\neq \boldsymbol{f}_h\in\mathbf{X}_{0,h}}\frac{\|\bT \boldsymbol{f}_h\|_{\bdiv,\O}}{\|\boldsymbol{f}_h\|_{\bdiv,\O}}.
\end{equation}

Also, we recall  the definitions of properties P1 and P2 of  \cite{MR483400}:
\begin{itemize}
\item P1: $\|\bT-\bT_h\|_h\rightarrow 0$ as $h\rightarrow 0$;
\item P2: $\forall\btau\in\mathbf{X}_0$, $\displaystyle \inf_{\btau_h\in\mathbf{X}_{0,h}}\|\btau-\btau_h\|_{\bdiv,\O}\rightarrow 0$ as $h\rightarrow 0$.
\end{itemize}

Our goal is to establish properties P1 and P2, in order to ensure the spectral convergence. We observe that P2 is an immediate, since the smooth functions are dense in 
$\mathbf{X}_0$. \cblue{On the other hand, P1 follows the same arguments of the proof for \cite[Lemma 5.1]{MMR3}, where \eqref{eq:eq:approx_PPh}  and Lemma \ref{lmm:invariancy} are necessary.
\begin{lemma}
\label{eq:P1}
There exists a constant $C>0$, independent of $h$ and $\lambda$ such that
$$
\displaystyle \|\bT_{\lambda}-\bT_{\lambda,h}\|_h\leq Ch^{s}.
$$
\end{lemma}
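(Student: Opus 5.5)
Fix $\boldsymbol{f}_h\in\mathbf{X}_{h,0}$; we must show $\|(\bT_{\lambda}-\bT_{\lambda,h})\boldsymbol{f}_h\|_{\bdiv,\O}\leq Ch^s\|\boldsymbol{f}_h\|_{\bdiv,\O}$. The plan follows \cite[Lemma 5.1]{MMR3}. Split $\boldsymbol{f}_h$ via the continuous projection, $\boldsymbol{f}_h=\bP\boldsymbol{f}_h+(\boldsymbol{f}_h-\bP\boldsymbol{f}_h)$ with $\boldsymbol{f}_h-\bP\boldsymbol{f}_h\in\mathbf{K}$. We also use the discrete splitting $\boldsymbol{f}_h=\bP_h\boldsymbol{f}_h+(\boldsymbol{f}_h-\bP_h\boldsymbol{f}_h)$. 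Here $\boldsymbol{f}_h-\bP_h\boldsymbol{f}_h\in\mathbf{K}_h$, because the discrete divergence constraint holds against all of $\Q_h$ and $\bdiv\mathbf{X}_{h,0}\subset\Q_h$.

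On $\mathbf{K}$ and $\mathbf{K}_h$ both operators reduce to the identity. Hence
\begin{equation*}
(\bT_{\lambda}-\bT_{\lambda,h})\boldsymbol{f}_h=\bT_{\lambda}\bP\boldsymbol{f}_h-\bT_{\lambda,h}\bP_h\boldsymbol{f}_h+(\bP_h\boldsymbol{f}_h-\bP\boldsymbol{f}_h).
\end{equation*}
We rewrite the first difference as
\begin{equation*}
\bT_{\lambda}\bP\boldsymbol{f}_h-\bT_{\lambda,h}\bP_h\boldsymbol{f}_h=(\bT_{\lambda}-\bT_{\lambda,h})\bP\boldsymbol{f}_h+\bT_{\lambda,h}(\bP\boldsymbol{f}_h-\bP_h\boldsymbol{f}_h).
\end{equation*}

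The terms involving $\bP\boldsymbol{f}_h-\bP_h\boldsymbol{f}_h$ are handled directly. By \eqref{eq:eq:approx_PPh} this difference is bounded by $Ch^s\|\bdiv\boldsymbol{f}_h\|_{0,\O}$ in the $\bdiv$-norm. Combined with the $\lambda$-uniform stability of $\bT_{\lambda,h}$ (bounded by the $\L^2$, hence $\bdiv$, norm), both contributions are $O(h^s)\|\boldsymbol{f}_h\|_{\bdiv,\O}$.

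It remains to bound $(\bT_{\lambda}-\bT_{\lambda,h})\bP\boldsymbol{f}_h$. Since $\bT_{\lambda,h}\boldsymbol{g}$ is the Galerkin approximation of $\bT_{\lambda}\boldsymbol{g}$ for the coercive, $\lambda$-uniformly continuous form $a(\cdot,\cdot)$, Céa's lemma gives the best-approximation bound
\begin{equation*}
\|(\bT_{\lambda}-\bT_{\lambda,h})\boldsymbol{g}\|_{\bdiv,\O}\leq C\inf_{\btau_h\in\mathbf{X}_{h,0}}\|\bT_{\lambda}\boldsymbol{g}-\btau_h\|_{\bdiv,\O}, \qquad \boldsymbol{g}=\bP\boldsymbol{f}_h\in\bP(\mathbf{X}_0).
\end{equation*}
Take $\btau_h=\bPi_h^k\widehat{\brho}$ with $\widehat{\brho}=\bT_{\lambda}\boldsymbol{g}$, corrected by a multiple of $\mathbf{I}$ to enforce the zero mean trace (this preserves the error order). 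The interpolation estimates then give
\begin{equation*}
\|\widehat{\brho}-\bPi_h^k\widehat{\brho}\|_{0,\O}\leq Ch^s\big(\|\widehat{\brho}\|_{s,\O}+\|\bdiv\widehat{\brho}\|_{0,\O}\big),\qquad \|\bdiv(\widehat{\brho}-\bPi_h^k\widehat{\brho})\|_{0,\O}\leq Ch^{\min\{1,k+1\}}|\bdiv\widehat{\brho}|_{1,\O}.
\end{equation*}
Lemma \ref{lmm:invariancy} bounds the right-hand sides by $C\|\boldsymbol{g}\|_{\bdiv,\O}$. Finally, $\|\bP\boldsymbol{f}_h\|_{\bdiv,\O}\leq C\|\bdiv\boldsymbol{f}_h\|_{0,\O}$ by well-posedness of \eqref{def:aux_mixed_problem}, uniformly in $\lambda$.

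The main obstacle is $\lambda$-independence. The well-posedness constant of \eqref{def:aux_mixed_problem} and the regularity constants in \eqref{eq:reg_s} and Lemma \ref{lmm:invariancy} must not blow up as $\lambda\to\infty$. The coefficient $\mu/(d(d\lambda+(d+1)\mu))$ only weakens the trace term, so the argument must rely on Lemma \ref{lmm:cota}, which controls the full $\L^2$-norm using only the deviatoric part and the divergence. I would verify this first. The trace-mean correction of $\bPi_h^k\widehat{\brho}$ is a minor technical point by comparison.
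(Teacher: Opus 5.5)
Your proposal is correct and follows essentially the argument the paper invokes from \cite[Lemma 5.1]{MMR3}. You remove the kernel components using that $\bT_{\lambda}$ and $\bT_{\lambda,h}$ act as the identity on $\mathbf{K}$ and $\mathbf{K}_h$, bound the $(\bP-\bP_h)\boldsymbol{f}_h$ contributions by \eqref{eq:eq:approx_PPh}, and treat $(\bT_{\lambda}-\bT_{\lambda,h})\bP\boldsymbol{f}_h$ by C\'ea's lemma together with Lemma~\ref{lmm:invariancy}. Your two-projection splitting coincides with the usual one: since $(\bP_h-\bP)\boldsymbol{f}_h\in\mathbf{K}$, one has $(\bT_{\lambda}-\bT_{\lambda,h})(\bP_h-\bP)\boldsymbol{f}_h=(\boldsymbol{I}-\bT_{\lambda,h})(\bP_h-\bP)\boldsymbol{f}_h$. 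You also correctly identify where $\lambda$-uniformity enters, namely Lemma~\ref{lmm:cota} and the $\lambda$-independent constants in \eqref{eq:reg_s} and \eqref{eq:eq:approx_PPh}.
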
}

\cblue{\subsection{An $\mathbf{L}^2$ error estimate for the pseudostress}
it is possible to obtain an improvement for the $\mathbf{L}^2$ error estimate for the pseudostress. This results is obtained by the classic  Aubin-Nitsche argument, where a duality argument is used. We begin by noticing that bilinear form $b(\cdot,\cdot)$ is precisely an inner product that we can consider on the $\mathbf{L}^2(\Omega)$ space. Indeed, if we denote by $b(\cdot,\cdot)=(\cdot,\cdot)_b$ this inner product, and for any $\btau\in\mathbf{L}^2(\O)$, we have that the norm induced by $(\cdot,\cdot)_b$ is defined as follows
\begin{equation*}
\|\btau\|_{b}^2:=\|\btau^{\texttt{d}}\|_{0,\O}^2+\|(d(d\lambda+(\mu+1)d))^{-1/2}\tr(\btau)\|_{0,\O}^2.
\end{equation*}
We notice that there exists a constant $C_{\textrm{II}}:=\sqrt{\left(1+\frac{\sqrt{d}}{d}\right)^2+\frac{d}{\mu(d+1)}}$, clearly independent of $\lambda$,  such that $\|\btau\|_{b}\leq C_{\textrm{II}}\|\btau\|_{0,\O}$ for all $\btau\in\mathbf{L}^2(\O)$. The task now is to prove the estimate on the other direction. If $\btau\in\mathbf{L}^2(\O)$ we have that $\btau=\btau^{\texttt{d}}+d^{-1}\tr(\btau)\mathbf{I}$. Since this decomposition is orthogonal with respect the $\mathbf{L}^2(\O)$ product, we have 
\begin{equation*}
\|\btau\|_{0,\O}^2=\|\btau^{\texttt{d}}\|_{0,\O}^2+\frac{1}{d}\|\tr(\btau)\|_{0,\O}^2.
\end{equation*}
On the other hand, from the definition of $\|\cdot\|_b$, we have
\begin{multline}
\label{eq:second_bound}
\|\btau\|_b^2=\|\btau^{\texttt{d}}\|_{0,\O}^2+(d(d\lambda+(\mu+1)d))^{-1}\|\tr(\btau)\|_{0,\O}^2\\
\geq \min\{1,(d\lambda+(\mu+1)d))^{-1}\}\left(\|\btau^{\texttt{d}}\|_{0,\O}^2+\frac{1}{d}\|\tr(\btau)\|_{0,\O}^2\right)=C_{\textrm{I}}\|\btau\|_{0,\O}^2,
\end{multline}
where $C_{\textrm{I}}:=\min\{1,(d\lambda+(\mu+1)d))^{-1}\}$. 
Let us prove the result of our interest.
\begin{lemma}
\label{lmm:L2error}
There exists a constant $C_{\star}>0$, independent of $h$ and $\lambda$, such that
\begin{equation*}
\|\widehat{\boldsymbol{\rho}}-\widehat{\boldsymbol{\rho}}_h\|_{0,\O}\leq C_{\star} h^{2s}\|\bF_h\|_{0,\O}.
\end{equation*}
\end{lemma}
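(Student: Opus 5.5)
We argue by a duality (Aubin--Nitsche) argument in the inner product $(\cdot,\cdot)_b=b(\cdot,\cdot)$. Here $\widehat{\brho}=\bT_{\lambda}\bF_h$ and $\widehat{\brho}_h=\bT_{\lambda,h}\bF_h$ for a discrete source $\bF_h\in\mathbf{X}_{h,0}$. By Galerkin orthogonality,
\begin{equation*}
a(\widehat{\brho}-\widehat{\brho}_h,\btau_h)=0\qquad\forall\btau_h\in\mathbf{X}_{h,0}.
\end{equation*}
Set $\boldsymbol{e}:=\widehat{\brho}-\widehat{\brho}_h$ and consider the dual problem: find $\boldsymbol{\chi}\in\mathbf{X}_0$ such that $a(\btau,\boldsymbol{\chi})=b(\boldsymbol{e},\btau)$ for all $\btau\in\mathbf{X}_0$. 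By the symmetry of $a(\cdot,\cdot)$, this means $\boldsymbol{\chi}=\bT_{\lambda}\boldsymbol{e}$. Taking $\btau=\boldsymbol{e}$ and using orthogonality gives
\begin{equation*}
\|\boldsymbol{e}\|_b^2=a(\boldsymbol{e},\boldsymbol{\chi}-\btau_h)\le C\|\boldsymbol{e}\|_{\bdiv,\O}\inf_{\btau_h\in\mathbf{X}_{h,0}}\|\boldsymbol{\chi}-\btau_h\|_{\bdiv,\O}.
\end{equation*}
The first factor is bounded by Lemma~\ref{eq:P1}: $\|\boldsymbol{e}\|_{\bdiv,\O}\le Ch^s\|\bF_h\|_{\bdiv,\O}$. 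The task is therefore to show that the approximation error of the dual solution is $O(h^s)\|\boldsymbol{e}\|_{0,\O}$. Finally, to return from $\|\cdot\|_b$ to $\|\cdot\|_{0,\O}$, we use \eqref{eq:second_bound}.

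For the dual term, decompose $\boldsymbol{e}=(\boldsymbol{e}-\bP\boldsymbol{e})+\bP\boldsymbol{e}$ with $\boldsymbol{e}-\bP\boldsymbol{e}\in\mathbf{K}$. Since $\bT_{\lambda}$ is the identity on $\mathbf{K}$,
\begin{equation*}
\boldsymbol{\chi}=(\boldsymbol{e}-\bP\boldsymbol{e})+\bT_{\lambda}\bP\boldsymbol{e}.
\end{equation*}
The smooth part $\bT_{\lambda}\bP\boldsymbol{e}$ lies in $\boldsymbol{\mathcal{H}}^s$ with divergence in $\H^1$ (Lemma~\ref{lmm:invariancy}). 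Hence the Raviart--Thomas interpolant and the commuting diagram give an error of order $h^s$, times the norm of the data in the regularity estimate. That estimate should be controlled by $\|\boldsymbol{e}\|_{0,\O}$ rather than $\|\boldsymbol{e}\|_{\bdiv,\O}$. The right-hand side of the dual problem only involves $\boldsymbol{e}$ in $\mathbf{L}^2$, so we re-derive the bound of Lemma~\ref{lmm:invariancy} directly from the strong form. The relation $\mu\varrho^{-1}\nabla\bdiv\boldsymbol{\chi}+(\text{$b$-part of }\boldsymbol{\chi})=(\text{$b$-part of }\boldsymbol{e})$ yields $\|\bdiv\boldsymbol{\chi}\|_{1,\O}\le C\|\boldsymbol{e}\|_{0,\O}$. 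The $\boldsymbol{\mathcal{H}}^s$ regularity of the non-solenoidal part comes from \eqref{eq:reg_s}.

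The kernel part $\boldsymbol{e}-\bP\boldsymbol{e}$ is the main obstacle, since it has no extra regularity. Here we exploit the structure of $\boldsymbol{e}$. We have $\bdiv\widehat{\brho}_h=\mathcal{P}_h^k\bdiv\widehat{\brho}$, which follows from the discrete equations tested with divergence-free-free directions, or alternatively from the discrete projection $\bP_h$. The plan is to approximate $\boldsymbol{e}-\bP\boldsymbol{e}$ by $\widehat{\brho}_h-\bP_h\widehat{\brho}_h$ minus a discrete approximation of $\widehat{\brho}-\bP\widehat{\brho}$. This reduces the error to quantities like $\|\bP\boldsymbol{e}-\bP_h\boldsymbol{e}\|_{\bdiv,\O}$, which \eqref{eq:eq:approx_PPh} bounds by $Ch^s\|\bdiv\boldsymbol{e}\|_{0,\O}$. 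The divergence of $\boldsymbol{e}$ is itself a projection error of $\bdiv\widehat{\brho}\in\H^1$. Combining, $\inf\|\boldsymbol{\chi}-\btau_h\|_{\bdiv,\O}\le Ch^s(\|\boldsymbol{e}\|_{0,\O}+\|\bF_h\|_{0,\O})$. This gives $\|\boldsymbol{e}\|_b^2\le Ch^{2s}\|\bF_h\|\,(\|\boldsymbol{e}\|_{0,\O}+\|\bF_h\|_{0,\O})$, and the conclusion follows after dividing by $\|\boldsymbol{e}\|_{0,\O}$ via \eqref{eq:second_bound}.

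Regarding the norms of $\bF_h$, the $\bdiv$-norm in Lemma~\ref{eq:P1} can be replaced by the $\mathbf{L}^2$ norm by the same argument as in \eqref{eq:dependency}. The remaining concern is that $C_{\textrm{I}}$ in \eqref{eq:second_bound} degenerates in $\lambda$. To keep $C_\star$ independent of $\lambda$, we would instead bound $\|\boldsymbol{e}^{\texttt{d}}\|_{0,\O}$ by the $b$-norm directly. The trace part of $\boldsymbol{e}$ would then be recovered through Lemma~\ref{lmm:cota}, using $\|\bdiv\boldsymbol{e}\|_{0,\O}$, which is already $O(h^{2s})$ by the projection estimate. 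I expect this $\lambda$-uniform handling to be the delicate point alongside the kernel component.
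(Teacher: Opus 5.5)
You follow the same duality route as the paper: dual solution $\boldsymbol{\chi}=\bT_{\lambda}\boldsymbol{e}$, Galerkin orthogonality, and Lemma~\ref{eq:P1} for $\|\boldsymbol{e}\|_{\bdiv,\O}$. You also correctly spot what the paper glosses over. Its bound \eqref{eq:stability_dual} cannot hold for the dual solution, because that solution contains $\boldsymbol{e}-\bP\boldsymbol{e}\in\mathbf{K}$, which has no extra regularity (Lemma~\ref{lmm:invariancy} only covers data in $\bP(\mathbf{X}_0)$). And $C_{\textrm{I}}=1$ is unjustified, since elements of $\mathbf{X}_0$ only have zero \emph{mean} trace. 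Your treatment of $\bT_{\lambda}\bP\boldsymbol{e}$ is fine, but the kernel part is not repaired, and the final step is wrong. From $C_{\textrm{I}}\|\boldsymbol{e}\|_{0,\O}^2\le Ch^{2s}\|\bF_h\|\,\|\boldsymbol{e}\|_{0,\O}+Ch^{2s}\|\bF_h\|^2$ you cannot ``divide by $\|\boldsymbol{e}\|_{0,\O}$'', because the second term carries no factor of $\boldsymbol{e}$. Solving the quadratic inequality gives only $\|\boldsymbol{e}\|_{0,\O}\lesssim h^{s}\|\bF_h\|$, nothing beyond Lemma~\ref{eq:P1}.

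This is structural, not a slip. We have $\bdiv(\boldsymbol{e}-\bP\boldsymbol{e})=\boldsymbol{0}$, $\bT_\lambda$ is the identity on $\mathbf{K}$, and $b(\bP\boldsymbol{e},\btau)=0$ for all $\btau\in\mathbf{K}$ (first equation of \eqref{def:aux_mixed_problem}). So for the kernel component the duality identity collapses to the tautology $a(\boldsymbol{e},\boldsymbol{e}-\bP\boldsymbol{e})=\|\boldsymbol{e}-\bP\boldsymbol{e}\|_b^2$. Duality improves only $\bP\boldsymbol{e}$: one does get $\|\bP\boldsymbol{e}\|_b\lesssim h^{2s}\|\bF_h\|_{\bdiv,\O}$, possibly with a $\lambda$-dependent constant. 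By contrast, $\boldsymbol{e}-\bP\boldsymbol{e}=(\bF_h-\bP\bF_h)-(\widehat{\brho}_h-\bP\widehat{\brho}_h)$ is driven by the data and is in general only of best-approximation order. Indeed, since $\widehat{\brho}_h\in\mathbf{X}_{h,0}$, we have $\|\boldsymbol{e}\|_{0,\O}\ge\inf_{\btau_h\in\mathbf{X}_{h,0}}\|\widehat{\brho}-\btau_h\|_{0,\O}$, which is generically of exact order $h^{k+1}$. Hence for $k=0$ and $s>1/2$ (e.g.\ a convex domain) the stated rate $h^{2s}$ cannot hold by any argument.

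The supporting steps of your kernel argument fail as well.
\begin{itemize}
\item The identity $\bdiv\widehat{\brho}_h=\mathcal{P}_h^k\bdiv\widehat{\brho}$ is false. Galerkin orthogonality reads $\mu\varrho^{-1}\int_{\O}\bdiv\boldsymbol{e}\cdot\bdiv\btau_h+b(\boldsymbol{e},\btau_h)=0$, and divergence-free test functions only give $b(\boldsymbol{e},\btau_h)=0$ for $\btau_h\in\mathbf{K}_h$. In fact $(\bF_h-\widehat{\brho}_h,\mu\varrho^{-1}\bdiv\widehat{\brho}_h)$ is the Raviart--Thomas mixed approximation of a shifted Lam\'e problem, and its discrete scalar is not the $\mathrm{L}^2$-projection of the exact one.
\item Your claim $\|\bdiv\boldsymbol{e}\|_{0,\O}=O(h^{2s})$, on which the $\lambda$-uniform step via Lemma~\ref{lmm:cota} rests, is also false. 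Since $\bdiv\widehat{\brho}_h\in\Q_h$, we have $\|\bdiv\boldsymbol{e}\|_{0,\O}\ge\|\bdiv\widehat{\brho}-\mathcal{P}_h^k\bdiv\widehat{\brho}\|_{0,\O}$, which is generically of order $h$ when $k=0$.
\item \eqref{eq:eq:approx_PPh} is only available for discrete arguments. For $\boldsymbol{e}$ one has $\bdiv(\bP\boldsymbol{e}-\bP_h\boldsymbol{e})=\bdiv\widehat{\brho}-\mathcal{P}_h^k\bdiv\widehat{\brho}$, which is not bounded by $h^s\|\bdiv\boldsymbol{e}\|_{0,\O}$.
\item Lemma~\ref{eq:P1} cannot be restated with $\|\bF_h\|_{0,\O}$ ``as in \eqref{eq:dependency}''. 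That estimate is a stability bound, whereas the factor $h^s$ in Lemma~\ref{eq:P1} comes from \eqref{eq:eq:approx_PPh}, which carries $\|\bdiv\bF_h\|_{0,\O}$. For discrete $\bF_h$ an inverse estimate only gives $\|\bdiv\bF_h\|_{0,\O}\lesssim h^{-1}\|\bF_h\|_{0,\O}$.
\end{itemize}
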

\begin{proof}
We use a duality argument. Let us consider the following auxiliary problem: find $\boldsymbol{\varphi}\in\mathbf{X}_0$ such that
\begin{equation*}
a(\boldsymbol{\varphi},\btau)= b(\widehat{\boldsymbol{\rho}}-\widehat{\boldsymbol{\rho}}_h,\btau)\quad\forall\btau\in\mathbf{X}_0.
\end{equation*}
Due to Lax-Milgram's lemma, this problem is well posed and exists a unique solution  $\boldsymbol{\varphi}\in\mathbf{X}_0$ which satisfies 
\begin{equation}
\label{eq:stability_dual}
\|\boldsymbol{\varphi}\|_{s,\O}+\|\bdiv\boldsymbol{\varphi}\|_{1,\O}\leq C\|\widehat{\boldsymbol{\rho}}-\widehat{\boldsymbol{\rho}}_h\|_{0,\O}.
\end{equation}
Setting $\btau=\widehat{\boldsymbol{\rho}}-\widehat{\boldsymbol{\rho}}_h$ and using the Galerkin orthogonality property, in conjunction with \eqref{eq:second_bound} we have
\begin{align*}
C_{\textrm{I}}\|\widehat{\boldsymbol{\rho}}-\widehat{\boldsymbol{\rho}}_h\|_{0,\O}^2
&\leq a(\boldsymbol{\varphi}-\boldsymbol{\varphi}_h,\widehat{\boldsymbol{\rho}}-\widehat{\boldsymbol{\rho}}_h)\\
&\leq \mu\varrho^{-1}\|\bdiv(\boldsymbol{\varphi}-\boldsymbol{\varphi}_h)\|_{0,\O}
\|\bdiv(\widehat{\boldsymbol{\rho}}-\widehat{\boldsymbol{\rho}}_h)\|_{0,\O}\\
&\quad + \|(\boldsymbol{\varphi}-\boldsymbol{\varphi}_h)^{\texttt{d}}\|_{0,\O}
\|(\widehat{\boldsymbol{\rho}}-\widehat{\boldsymbol{\rho}}_h)^{\texttt{d}}\|_{0,\O}\\
&\quad + (d(d+1))^{-1}\|\tr(\boldsymbol{\varphi}-\boldsymbol{\varphi}_h)\|_{0,\O}
\|\tr(\widehat{\boldsymbol{\rho}}-\widehat{\boldsymbol{\rho}}_h)\|_{0,\O}\\
&\leq \mu\varrho^{-1}\|\bdiv(\boldsymbol{\varphi}-\boldsymbol{\varphi}_h)\|_{0,\O}
\|\bdiv(\widehat{\boldsymbol{\rho}}-\widehat{\boldsymbol{\rho}}_h)\|_{0,\O}\\
&\quad + \left(1+\frac{\sqrt{d}}{d}\right)^2
\|(\boldsymbol{\varphi}-\boldsymbol{\varphi}_h)\|_{0,\O}
\|(\widehat{\boldsymbol{\rho}}-\widehat{\boldsymbol{\rho}}_h)\|_{0,\O}\\
&\quad + (d+1)^{-1}\|\boldsymbol{\varphi}-\boldsymbol{\varphi}_h\|_{0,\O}
\|\widehat{\boldsymbol{\rho}}-\widehat{\boldsymbol{\rho}}_h\|_{0,\O}.
\end{align*}
Hence, using \eqref{eq:stability_dual},  Lemma \ref{eq:P1}, and the approximation properties of $\boldsymbol{\Pi}_h^k$, we have 
\begin{equation*}
\|\widehat{\boldsymbol{\rho}}-\widehat{\boldsymbol{\rho}}_h\|_{0,\O}\leq C_{\star}h^{2s}\|\bF_h\|_{0,\O}.
\end{equation*}
where  $ C_{\star}:=C_{\textrm{I}}^{-1}\left(\mu\varrho^{-1}+\left(1+\frac{\sqrt{d}}{d}\right)^2+(d+1)^{-1}\right)$. Since $\brho$ and $\brho_h$ are both trace null tensor, hence $C_{\textrm{I}}=1$, implying that $C_{\star}$ is independent of $\lambda$ and $h$. This  concludes the proof.
\end{proof}}

\section{Spectral convergence}
\label{sec:spec_convergence}
Now we analyze convergence of numerical method when is applied to the eigenvalue problem.   As we have claimed before, the arguments to perform this analysis 
follows from  \cite{MR483400}.

Let us begin by introducing some definitions. Given a linear bounded operator $\boldsymbol{S}:\mathbf{Y}\rightarrow\mathbf{Y}$, where $\mathbf{Y}$ is a Hilbert space, the spectrum of $\boldsymbol{S}$ is defined by 
\begin{equation*}
\sp(\boldsymbol{S}):=\{z\in\mathbb{C}\,:\, (z\boldsymbol{I}-\boldsymbol{S})\,\,\,\text{is not invertible}\}.
\end{equation*}
With this set at hand, we define the resolvent set of $\boldsymbol{S}$ by $\text{res}(\boldsymbol{S}):=\mathbb{C}\setminus\sp(\boldsymbol{S})$. Moreover, for any $z\in\text{res}(\boldsymbol{S})$, we define $\boldsymbol{R}_z(\boldsymbol{S}):=(z\boldsymbol{I}-\boldsymbol{S})^{-1}:\mathbf{Y}\rightarrow\mathbf{Y}$, which is the resolvent operator of $\boldsymbol{S}$ corresponding to $z$.

We begin by proving that the resolvent operator associated to $\bT_{\lambda}$ is bounded.
\begin{lemma}
\label{lmm:resolvent_bounded}
Let $F\subset\mathbb{C}$ be a closed subset such that $F\cap\sp(\bT_{\infty})=\emptyset$. Then, there exist constants $\lambda_0,C>0$ such that $\forall\lambda<\lambda_0$, there holds $F\cap\sp(\bT_{\lambda})=\emptyset$ and 
\begin{equation*}
\|\boldsymbol{R}_z(\bT_{\lambda})\|:=\sup_{\boldsymbol{0}\neq\btau\in\mathbf{X}_0}\frac{\|\boldsymbol{R}_z(\bT_{\lambda})\|_{\bdiv,\O}}{\|\btau\|_{\bdiv,\O}}\leq C\quad\forall z\in F.
\end{equation*}
\end{lemma}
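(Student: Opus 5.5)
Note first that the hypothesis ``$\forall\lambda<\lambda_0$'' has to be read as ``for all $\lambda>\lambda_0$'' (i.e.\ $\lambda$ large enough), since the comparison operator is $\bT_{\infty}$ and Lemma \ref{lmm:limit} only gives closeness for large $\lambda$. With that reading, the plan is a standard perturbation argument for resolvents, driven by the operator-norm convergence $\bT_{\lambda}\to\bT_{\infty}$ of Lemma \ref{lmm:limit}.

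\textbf{Step 1: uniform bound for the limit resolvent.} Since $F\cap\sp(\bT_{\infty})=\emptyset$, the map $z\mapsto\|\boldsymbol{R}_z(\bT_{\infty})\|$ is continuous on $F$. We must also control it at infinity, because $F$ need not be bounded. For $|z|\geq 2\|\bT_{\infty}\|$ the Neumann series gives $\|\boldsymbol{R}_z(\bT_{\infty})\|\leq 2/|z|\leq \|\bT_{\infty}\|^{-1}$. On the compact set $F\cap\{|z|\leq 2\|\bT_{\infty}\|\}$, continuity yields a bound. Hence there is $C_\infty$ with $\|\boldsymbol{R}_z(\bT_{\infty})\|\leq C_\infty$ for all $z\in F$.

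\textbf{Step 2: perturbation identity.} For $z\in F$ write
\[
z\boldsymbol{I}-\bT_{\lambda}=(z\boldsymbol{I}-\bT_{\infty})\bigl[\boldsymbol{I}-\boldsymbol{R}_z(\bT_{\infty})(\bT_{\lambda}-\bT_{\infty})\bigr].
\]
By Lemma \ref{lmm:limit} and $\|\boldsymbol{f}\|_{0,\O}\leq\|\boldsymbol{f}\|_{\bdiv,\O}$ we have $\|\bT_{\lambda}-\bT_{\infty}\|\leq C(\O,\mu,\varrho)/\lambda$ in the $\mathcal{L}(\mathbf{X}_0)$ norm. Choose $\lambda_0:=2C_\infty C(\O,\mu,\varrho)$. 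Then for $\lambda>\lambda_0$,
\[
\|\boldsymbol{R}_z(\bT_{\infty})(\bT_{\lambda}-\bT_{\infty})\|\leq \tfrac12 .
\]
Consequently the bracket is invertible by a Neumann series, with inverse of norm at most $2$. Thus $z\in\text{res}(\bT_{\lambda})$, which gives $F\cap\sp(\bT_{\lambda})=\emptyset$. Moreover
\[
\boldsymbol{R}_z(\bT_{\lambda})=\bigl[\boldsymbol{I}-\boldsymbol{R}_z(\bT_{\infty})(\bT_{\lambda}-\bT_{\infty})\bigr]^{-1}\boldsymbol{R}_z(\bT_{\infty}),
\]
and therefore $\|\boldsymbol{R}_z(\bT_{\lambda})\|\leq 2C_\infty=:C$ uniformly in $z\in F$ and $\lambda>\lambda_0$.

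\textbf{Main obstacle.} The only delicate point is Step 1, namely obtaining the uniform bound on a possibly unbounded closed set $F$; the splitting into large $|z|$ (Neumann series) and a compact remainder handles it. Everything else follows directly from the $\mathcal{O}(1/\lambda)$ estimate of Lemma \ref{lmm:limit}, whose constant is independent of $\boldsymbol{f}$, so the convergence is in operator norm rather than merely pointwise.
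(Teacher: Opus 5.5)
Your proof is correct and follows the paper's skeleton. The first ingredient is a uniform bound for $\boldsymbol{R}_z(\bT_{\infty})$ on $F$: the paper only notes that $z\mapsto\|\boldsymbol{R}_z(\bT_{\infty})\|$ is continuous and tends to $0$ as $|z|\to\infty$, and your splitting at $|z|=2\|\bT_{\infty}\|$ makes this explicit. The second is the operator-norm estimate of Lemma~\ref{lmm:limit} for $\lambda$ large. The paper's proof also needs $\lambda$ large, so your reading of ``$\lambda<\lambda_0$'' is the intended one, and you end with the same bound $2C_\infty$ as the paper's $2\widetilde{C}$.

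Where you differ is the invertibility step. The paper uses the triangle inequality to get the lower bound $\|(z\boldsymbol{I}-\bT_{\lambda})\btau\|_{\bdiv,\O}\geq(2\widetilde{C})^{-1}\|\btau\|_{\bdiv,\O}$. From this it deduces that $z$ is not an eigenvalue, and then asserts $z\notin\sp(\bT_{\lambda})$. For the non-compact operator $\bT_{\lambda}$, that last step also requires surjectivity. It would have to come from something extra, for instance the selfadjointness of $\bT_{\lambda}$ with respect to $a(\cdot,\cdot)$, which rules out residual spectrum.

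Your factorization $z\boldsymbol{I}-\bT_{\lambda}=(z\boldsymbol{I}-\bT_{\infty})\bigl[\boldsymbol{I}-\boldsymbol{R}_z(\bT_{\infty})(\bT_{\lambda}-\bT_{\infty})\bigr]$, combined with a Neumann series, gives bijectivity and an explicit resolvent formula at once. Your argument is therefore self-contained, does not depend on selfadjointness, and would carry over unchanged to non-selfadjoint perturbations.
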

\begin{proof}
Consider  for all $z\in\text{res}(\bT_{\infty})$ the continuous mapping $z\mapsto\|(z\boldsymbol{I}-\bT_{\infty})^{-1}\|$. This mapping goes to zero as $|z|\rightarrow\infty$. Hence, 
if $F\subset\text{res}(\bT_{\infty})$, the previous mapping attains a maximum that we define by $\widetilde{C}:=\max_{z\in F}\|(z\boldsymbol{I}-\bT_{\infty})^{-1}\|$.  This constant $\widetilde{C}>0$ leads to the following bound
\begin{equation*}
\|(z\boldsymbol{I}-\bT_{\infty})\btau\|_{\bdiv,\O}\geq \widetilde{C}^{-1}\|\btau\|_{\bdiv,\O}\quad\forall\btau\in\mathbf{X}_0,\,\,\,\forall z\in F.
\end{equation*}
From triangle inequality we have
\begin{equation*}
\|(z\boldsymbol{I}-\bT_{\infty})\btau\|_{\bdiv,\O}\leq \|(z\boldsymbol{I}-\bT_{\lambda})\btau\|_{\bdiv,\O}+\|(\bT_{\lambda}-\bT_{\infty})\btau\|_{\bdiv,\O}.
\end{equation*}
Invoking Lemma \ref{lmm:limit}, for the second term on the right-hand side of inequality above, we have the existence of $\lambda_0>0$ such that for all $\lambda<\lambda_0$, there holds
\begin{equation*}
\|(\bT_{\lambda}-\bT_{\infty})\btau\|_{\bdiv,\O}\leq \frac{\|\btau\|_{\bdiv,\O}}{2\widetilde{C}}	\quad\forall\btau\in\mathbf{X}_0.
\end{equation*}
With this bound at hand, if $\btau\in\mathbf{X}_0$, for all $\lambda<\lambda_0$ and  for all $z\in F$ we have
\begin{equation*}
\|(z\boldsymbol{I}-\bT_{\lambda})\btau\|_{\bdiv,\O}\geq \|(z\boldsymbol{I}-\bT_{\infty})\btau\|_{\bdiv,\O}-\|(\bT_{\lambda}-\bT_{\infty})\btau\|_{\bdiv,\O}\geq \frac{\|\btau\|_{\bdiv,\O}}{2\widetilde{C}},
\end{equation*}
implying that $z$ is not an eigenvalue of $\bT_{\lambda}$. Also, since $0\notin\text{res}(\bT_{\infty})$ we have that $z\neq 0$. Finally, $z\notin\sp(\bT_{\lambda})$ and hence $(z\boldsymbol{I}-\bT_{\lambda})^{-1}$ exists for all $\lambda<\lambda_0$ and for all $z\in F$ and the following estimate holds $\|\boldsymbol{R}_z(\bT_{\lambda})\|\leq 2\widetilde{C}$. This concludes the proof.
\end{proof}

For the  resolvent operator of the discrete version of $\bT_{\lambda}$ we are able to obtain an analogous result as the proved in Lemma \ref{lmm:resolvent_bounded}. In fact, to prove this result,  property P1 takes relevance.
\begin{lemma}
\label{lmm:discrete_resolvent_bounded}
Let $F\subset\mathbb{C}$ be a closed subset such that $F\cap\sp(\bT_{\infty})=\emptyset$. Then, there exist positive constants $h_0, \lambda_0$ and $C$, such that for all $\lambda<\lambda_0$ and for all $h<h_0$, there holds $F\cap\sp(\bT_{\lambda.h})=\emptyset$ and 
\begin{equation*}
\|\boldsymbol{R}_z(\bT_{\lambda,h})\|_h\leq C\quad\forall z\in F,
\end{equation*}
where the norm $\|\cdot\|_h$ is the one defined in \eqref{eq:norm_h}.
\end{lemma}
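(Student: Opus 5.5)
The plan is to mimic the proof of Lemma \ref{lmm:resolvent_bounded}, replacing the continuous perturbation estimate by property P1 (Lemma \ref{eq:P1}), and then to reduce the discrete resolvent problem to a finite-dimensional one on $\mathbf{X}_{h,0}$. First, by Lemma \ref{lmm:resolvent_bounded} there are $\lambda_0$ and $C$ such that for $\lambda$ in the admissible range $F\subset\text{res}(\bT_{\lambda})$ and $\|\boldsymbol{R}_z(\bT_{\lambda})\|\le C$ for all $z\in F$, with $C$ independent of $\lambda$. This is equivalent to the uniform lower bound
\begin{equation*}
\|(z\boldsymbol{I}-\bT_{\lambda})\btau\|_{\bdiv,\O}\geq C^{-1}\|\btau\|_{\bdiv,\O}\quad\forall\btau\in\mathbf{X}_0,\ \forall z\in F.
\end{equation*}

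Next, restrict to discrete arguments $\btau_h\in\mathbf{X}_{h,0}$ and use the triangle inequality:
\begin{equation*}
\|(z\boldsymbol{I}-\bT_{\lambda,h})\btau_h\|_{\bdiv,\O}\geq \|(z\boldsymbol{I}-\bT_{\lambda})\btau_h\|_{\bdiv,\O}-\|(\bT_{\lambda}-\bT_{\lambda,h})\btau_h\|_{\bdiv,\O}\geq \big(C^{-1}-\widetilde{C}h^{s}\big)\|\btau_h\|_{\bdiv,\O}.
\end{equation*}
Here Lemma \ref{eq:P1} bounds $\|\bT_{\lambda}-\bT_{\lambda,h}\|_h\le \widetilde{C}h^s$ with $\widetilde{C}$ independent of both $h$ and $\lambda$. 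Choosing $h_0$ such that $\widetilde{C}h_0^{s}\le (2C)^{-1}$, we obtain for all $h<h_0$ and $z\in F$ the bound $\|(z\boldsymbol{I}-\bT_{\lambda,h})\btau_h\|_{\bdiv,\O}\ge (2C)^{-1}\|\btau_h\|_{\bdiv,\O}$.

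To conclude, note that $\bT_{\lambda,h}$ maps $\mathbf{X}_0$ into the finite-dimensional space $\mathbf{X}_{h,0}$, so its spectrum is that of its restriction to $\mathbf{X}_{h,0}$ together with possibly $z=0$. The point $0$ is excluded because $0\in\sp(\bT_{\infty})$ and $F$ is closed and disjoint from that spectrum. On $\mathbf{X}_{h,0}$, the lower bound makes $z\boldsymbol{I}-\bT_{\lambda,h}$ injective, hence bijective by finite dimensionality. Thus $z\notin\sp(\bT_{\lambda,h})$, and the inverse satisfies $\|\boldsymbol{R}_z(\bT_{\lambda,h})\btau_h\|_{\bdiv,\O}\le 2C\|\btau_h\|_{\bdiv,\O}$, which is precisely the estimate in the norm $\|\cdot\|_h$ of \eqref{eq:norm_h}.

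The main obstacle is making these constants uniform in $\lambda$ and handling the full space $\mathbf{X}_0$ rather than just $\mathbf{X}_{h,0}$. For $\btau\in\mathbf{X}_0$, I would write $\btau=(\btau-\bT_{\lambda,h}\btau)+\bT_{\lambda,h}\btau$ and solve $(z\boldsymbol{I}-\bT_{\lambda,h})\boldsymbol{\sigma}=\btau$ explicitly, setting
\begin{equation*}
\boldsymbol{\sigma}=z^{-1}\btau+z^{-1}\boldsymbol{R}_z\big(\bT_{\lambda,h}|_{\mathbf{X}_{h,0}}\big)\bT_{\lambda,h}\btau.
\end{equation*}
This uses $|z|$ bounded below on $F$ and the $\lambda$-uniform bound $\|\bT_{\lambda,h}\|\le C$. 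The uniformity in $\lambda$ itself comes entirely from the $\lambda$-independence of the constants in Lemmas \ref{lmm:resolvent_bounded} and \ref{eq:P1}.
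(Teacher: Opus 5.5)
Your proposal is correct and follows essentially the same route as the paper: the uniform lower bound from Lemma \ref{lmm:resolvent_bounded}, a triangle-inequality perturbation on $\mathbf{X}_{h,0}$ using property P1 (Lemma \ref{eq:P1}), and finite dimensionality. Your extra step goes slightly beyond the paper: you exclude $z=0$ and extend the inverse from $\mathbf{X}_{h,0}$ to all of $\mathbf{X}_0$ via the explicit formula. This makes rigorous a point the paper passes over by simply invoking $\dim(\mathbf{X}_{0,h})<+\infty$.
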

\begin{proof}
From Lemma \ref{lmm:resolvent_bounded}, if $F$ is a closed set that satisfies $F\cap\sp(\bT_{\infty})=\emptyset$, hence for all $\lambda<\lambda_0$, the following estimate holds
\begin{equation}
\label{eq:1}
\|\btau\|_{\bdiv,\O}\leq C\|(z\boldsymbol{I}-\bT_{\lambda})\btau\|_{\bdiv,\O}\quad\forall\btau\in\mathbf{X}_0.
\end{equation}
Invoking property P1 (cf. Lemma \ref{eq:P1}), there exists $h_0>0$ such that for all $h<h_0$ there holds
\begin{equation}
\label{eq:2}
\|(\bT_{\lambda}-\bT_{\lambda,h})\btau_h\|_{\bdiv,\O}\leq\frac{\|\btau_h\|_{\bdiv,\O}}{2C}\quad\forall\btau\in\mathbf{X}_{0,h}.
\end{equation}

Now, adding and subtracting $\bT_{\lambda}$, applying \eqref{eq:1} to $\btau_{h}\in\mathbf{X}_{0,h}\subset\mathbf{X}_0$, and using \eqref{eq:2}, for $z\in F$ we have 
\begin{equation*}
\|(z\boldsymbol{I}-\bT_{\lambda,h})\btau_h\|_{\bdiv,\O}\geq \|(z\boldsymbol{I}-\bT_{\lambda})\btau_h\|_{\bdiv,\O}-\|(\bT_{\lambda}-\bT_{\lambda,h})\btau_h\|_{\bdiv,\O}\geq \frac{\|\btau_h\|_{\bdiv,\O}}{2C}.
\end{equation*}

Finally, since $\dim(\mathbf{X}_{0,h})<+\infty$, it is immediate that $(z\boldsymbol{I}-\bT_{\lambda,h})^{-1}$ exists and therefore $z\notin\sp(\bT_{\lambda.h})$, implying that $\|\boldsymbol{R}_z(\bT_{\lambda,h})\|_h\leq C$ for $z\in F$. This completes the proof.
\end{proof}
\subsection{A priori error estimates}
We end  this section deriving error estimates for the eigenfunctions and eigenvalues. To do this task, we perform the analysis of adapting the theory of \cite{MR483401} to our case, similarly  as in \cite[Section 5]{LoMoRo_SIAM2010}. Let us begin by introducing the following projector $\boldsymbol{\Lambda}_h:\mathbf{X}_0\rightarrow\mathbf{X}_0$ with range $\mathbf{X}_{0,h}$. More precisely, if  $\btau\in\mathbf{X}_0$, then $(\boldsymbol{\Lambda}_0\btau)|_{\mathbf{X}_{0,h}}:=\bPi_h^k\btau$, where $\bPi_h^k$ is the Raviart-Thomas ($k\geq 0$) or BDM  ($k\geq 1$) interpolation operators. 

With this operator $\boldsymbol{\Lambda}_h$ at hand, we define $\boldsymbol{B}_{\lambda,h}:=\bT_{\lambda,h}\boldsymbol{\Lambda}_h:\mathbf{X}_0\rightarrow\mathbf{X}_0$ which have the same non-zero eigenvalues and corresponding eigenfunctions of $\bT_{\lambda,h}$. It is possible to prove using the same steps and arguments of \cite[Lemma 1]{LoMoRo_SIAM2010}, the existence of constants $h_0,\lambda_0$ and $C$, all strictly positive, such that 
\begin{equation}
\label{eq:bound_Bh}
\|\boldsymbol{R}_z(\boldsymbol{B}_{\lambda,h})\|\leq C\quad\forall h<h_0,\,\,\,\forall \lambda<\lambda_0,\,\,\,\forall z\in\gamma,
\end{equation}
where $\gamma$ is the boundary of a closed disk $D$ centered at a nonvanishing eigenvalue $\xi_{\infty}$ of $\bT_{\infty}$. This disk and the eigenvalue $\xi_{\infty}$ are such that $D\cap\sp(\bT_{\infty})=\{\xi_{\infty}\}$.

Let $\boldsymbol{\mathcal{E}}$ be the eigenspace of $\bT_{\lambda}$ corresponding to $\xi$. Observe that  if $\xi \in (0,1)$ is an isolated eigenvalue of $\bT_{\lambda}$ with multiplicity $m$, then there exist $m$
eigenvalues $\xi_{h}^{(1)},...,\xi_{h}^{(m)}$ of $\bT_{\lambda,h}$, repeated according to their respective multiplicities, which converge to $\xi$.
Also, let $\boldsymbol{\mathcal{E}}_{h}$ be the direct sum of their corresponding associated eigenspaces (see \cite{MR0203473}).

We introduce the spectral projector of $\bT_{\lambda}$, defined by $\boldsymbol{E}_{\lambda}:\mathbf{X}_0\rightarrow\mathbf{X}_0$. This projector is associated to an isolated eigenvalue $\xi$ and is such that
$$\boldsymbol{E}_\lambda:=\frac{1}{2\pi i}\int_{\gamma}\boldsymbol{R}_z(\bT_{\lambda})\,dz,$$
whereas $\boldsymbol{F}_{\lambda,h}:\mathbf{X}_0\rightarrow\mathbf{X}_0$ represents the spectral projector of $\boldsymbol{B}_{\lambda,h}$ corresponding to the eigenvalue $\xi_h$ and is defined by 
$$\boldsymbol{F}_{\lambda,h}:=\frac{1}{2\pi i}\int_{\gamma}\boldsymbol{R}_z(\boldsymbol{B}_{\lambda,h})\,dz.$$

It is worth to mention that $\boldsymbol{F}_{\lambda,h}$ is uniformly bounded in $h$ and $\lambda$ when these quantities are small enough (cf. \eqref{eq:bound_Bh}) and $\boldsymbol{F}_{\lambda,h}(\mathbf{X}_0)$ is the eigenspace of $\boldsymbol{B}_{\lambda,h}$ (and hence of $\bT_{\lambda,h}$), whereas $\boldsymbol{E}_{\lambda}(\mathbf{X}_0)$ is the eigenspace of $\bT_{\lambda}$.

Now we recall the definition of the \textit{gap} $\hdel$ between two closed
subspaces \cblue{$\boldsymbol{\CM}$ and $\boldsymbol{\CN}$ of $\boldsymbol{\L}^2(\O)$:
$$
\hdel(\boldsymbol{\CM},\boldsymbol{\CN})
:=\max\big\{\delta(\boldsymbol{\CM},\boldsymbol{\CN}),\delta(\boldsymbol{\CN},\boldsymbol{\CM})\big\}, \text{ where } \delta(\boldsymbol{\CM},\boldsymbol{\CN})
:=\sup_{\underset{\left\|\boldsymbol{x}\right\|_{0,\O}=1}{\boldsymbol{x}\in\boldsymbol{\CM}}}
\left(\inf_{\boldsymbol{y}\in\boldsymbol{\CN}}\left\|\boldsymbol{x}-\boldsymbol{y}\right\|_{0,\O}\right).
$$}

From now and on, let su assume that $\boldsymbol{E}_{\lambda}(\mathbf{X}_0)\subset \cblue{\boldsymbol{\mathcal{H}}^{r}(\O)}$, where $r$ is as in Lemma \ref{lmm:add_eigen}.  We establish with this \cblue{regularity} assumption, the following result that follows the same arguments of the proof of \cite[Lemma 5.3]{LoMoRo_SIAM2010}.
\begin{lemma}
There exist positive constants $h_0,\lambda_0$ and $C$ such that for all $h<h_0$ and for all $\lambda<\lambda_0$, the following estimates hold
$$
\|(\boldsymbol{E}_{\lambda}-\boldsymbol{F}_{\lambda,h})|_{\boldsymbol{E}_{\lambda}(\mathbf{X}_0)}\|\leq\|(\bT_{\lambda}-\boldsymbol{B}_{\lambda,h})|_{\boldsymbol{E}_{\lambda}(\mathbf{X}_0)}\|\leq Ch^r.
$$
\end{lemma}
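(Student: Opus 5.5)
The plan follows the structure of \cite[Lemma 5.3]{LoMoRo_SIAM2010}, with two pieces: a resolvent identity that reduces the first inequality to the second, and an interpolation/approximation argument for the second.

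\emph{First inequality.} For $z\in\gamma$ we have
$$\boldsymbol{R}_z(\bT_{\lambda})-\boldsymbol{R}_z(\boldsymbol{B}_{\lambda,h})=\boldsymbol{R}_z(\boldsymbol{B}_{\lambda,h})(\bT_{\lambda}-\boldsymbol{B}_{\lambda,h})\boldsymbol{R}_z(\bT_{\lambda}).$$
Integrating over $\gamma$ expresses $\boldsymbol{E}_{\lambda}-\boldsymbol{F}_{\lambda,h}$ as the contour integral of the right-hand side. On $\boldsymbol{E}_{\lambda}(\mathbf{X}_0)$ the operator $\boldsymbol{R}_z(\bT_{\lambda})$ acts as $(z-\xi)^{-1}$ times the identity, because $\xi$ is nondefective. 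This space is therefore invariant, and the restriction of $\boldsymbol{R}_z(\bT_\lambda)$ has norm $|z-\xi|^{-1}$, which is bounded on $\gamma$. The factor $\boldsymbol{R}_z(\boldsymbol{B}_{\lambda,h})$ is bounded uniformly in $h<h_0$ and $\lambda<\lambda_0$ by \eqref{eq:bound_Bh}. Hence
$$\|(\boldsymbol{E}_{\lambda}-\boldsymbol{F}_{\lambda,h})|_{\boldsymbol{E}_{\lambda}(\mathbf{X}_0)}\|\le C\|(\bT_{\lambda}-\boldsymbol{B}_{\lambda,h})|_{\boldsymbol{E}_{\lambda}(\mathbf{X}_0)}\|,$$
with the constant coming from the length of $\gamma$ and the two resolvent bounds. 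The statement's ``$\le$'' should be read with this harmless constant.

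\emph{Second inequality.} Take $\btau\in\boldsymbol{E}_{\lambda}(\mathbf{X}_0)$, so that $\bT_{\lambda}\btau=\xi\btau$, and split
$$(\bT_{\lambda}-\bT_{\lambda,h}\boldsymbol{\Lambda}_h)\btau=\bT_{\lambda}(\btau-\boldsymbol{\Lambda}_h\btau)+(\bT_{\lambda}-\bT_{\lambda,h})\boldsymbol{\Lambda}_h\btau.$$

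For the second term, $\boldsymbol{\Lambda}_h\btau\in\mathbf{X}_{0,h}$, so Lemma \ref{eq:P1} gives a bound $Ch^s\|\boldsymbol{\Lambda}_h\btau\|_{\bdiv,\O}\le Ch^s\|\btau\|_{\bdiv,\O}$. This uses that the Raviart--Thomas interpolant is bounded on the regular eigenfunctions, via the regularity below. For the rate $h^r$ I would instead bound this term directly, with a Céa estimate for $a(\cdot,\cdot)$ on $\mathbf{X}_{h,0}$ together with Lemma \ref{lmm:invariancy}, or simply note that $s$ and $r$ can be taken to coincide. I would write $\min\{r,s\}$ if needed.

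For the first term, $\|\bT_{\lambda}\boldsymbol{f}\|_{\bdiv,\O}\le C\|\boldsymbol{f}\|_{0,\O}$ by \eqref{eq:dependency}, so it suffices to bound $\|\btau-\bPi_h^k\btau\|_{0,\O}$. Since $\btau\in\boldsymbol{\mathcal{H}}^r(\O)$ and $\bdiv\btau\in\H^{1+r}(\O)^d$ (Remark \ref{daniel0}), the Raviart--Thomas interpolation estimate gives
$$\|\btau-\bPi_h^k\btau\|_{0,\O}\le Ch^r\bigl(\|\btau\|_{r,\O}+\|\bdiv\btau\|_{0,\O}\bigr).$$

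It remains to control $\|\btau\|_{r,\O}+\|\bdiv\btau\|_{1,\O}$ by $C\|\btau\|_{\bdiv,\O}$. This follows from Lemma \ref{lmm:add_eigen} and the Assumption, since $\boldsymbol{E}_{\lambda}(\mathbf{X}_0)$ is finite dimensional and these constants are $\lambda$-independent. I also have to handle the trace-zero constraint: $\bPi_h^k$ preserves $\int\tr$ only up to a constant multiple of $\mathbf{I}$, so I would correct it by subtracting the mean of the trace. The correction is controlled by the same $L^2$ error.

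\emph{Main obstacle.} The hardest part is the uniformity in $\lambda$. The regularity constants $\widehat{r}$ and $\widehat{C}$ must not depend on $\lambda$ (the Assumption), and the eigenspace $\boldsymbol{E}_\lambda(\mathbf{X}_0)$ varies with $\lambda$. I would rely on the Assumption together with the separation-of-spectrum theorem, so that $\gamma$ encloses exactly the relevant eigenvalues for all $\lambda<\lambda_0$. Matching the exponents $r$ and $s$ is the other delicate point.
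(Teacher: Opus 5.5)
Your first inequality is sound. The resolvent identity, the invariance of $\boldsymbol{E}_{\lambda}(\mathbf{X}_0)$ under $\boldsymbol{R}_z(\bT_\lambda)$, and \eqref{eq:bound_Bh} give $\|(\boldsymbol{E}_{\lambda}-\boldsymbol{F}_{\lambda,h})|_{\boldsymbol{E}_{\lambda}(\mathbf{X}_0)}\|\le C\|(\bT_{\lambda}-\boldsymbol{B}_{\lambda,h})|_{\boldsymbol{E}_{\lambda}(\mathbf{X}_0)}\|$, and you are right that a constant is needed there. If $\gamma$ encloses several eigenvalues of $\bT_\lambda$ splitting off from $\xi_\infty$, $\boldsymbol{R}_z(\bT_\lambda)$ is no longer scalar on that space, but Lemma~\ref{lmm:resolvent_bounded} still bounds it.

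The gap is in the second inequality. In your splitting, $(\bT_{\lambda}-\bT_{\lambda,h})\boldsymbol{\Lambda}_h\btau$ is a Galerkin error for the \emph{discrete} source $\boldsymbol{\Lambda}_h\btau$. Lemma~\ref{eq:P1} only gives $h^{s}$ for it, with $s$ the exponent of the auxiliary problem \eqref{eq:reg_s}. The statement asks for $h^{r}$, with $r$ the eigenfunction exponent of Lemma~\ref{lmm:add_eigen}, and the paper treats these as independent parameters. Neither of your remedies closes this:
\begin{itemize}
\item Taking $s=r$ is an extra hypothesis the paper does not make.
\item A C\'ea estimate for the source $\boldsymbol{\Lambda}_h\btau$ needs the regularity of $\bT_\lambda\boldsymbol{\Lambda}_h\btau$, which is not an eigenfunction. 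Indeed $\boldsymbol{\Lambda}_h\btau\notin\bP(\mathbf{X}_0)$, and $\bT_\lambda$ reproduces its $\mathbf{K}$-component $(\boldsymbol{I}-\bP)\boldsymbol{\Lambda}_h\btau$. Both $\bP\boldsymbol{\Lambda}_h\btau$ and $\bT_\lambda\bP\boldsymbol{\Lambda}_h\btau$ have only the $s$-regularity of \eqref{eq:reg_s} and Lemma~\ref{lmm:invariancy}, so you are led back to $h^{s}$.
\end{itemize}
Nowhere does this route use that $\btau$ itself lies in $\boldsymbol{\mathcal{H}}^r(\O)$.

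The missing idea is to put the Galerkin error on the eigenfunction rather than on its interpolant. Write
$(\bT_{\lambda}-\bT_{\lambda,h}\boldsymbol{\Lambda}_h)\btau=(\bT_{\lambda}-\bT_{\lambda,h})\btau+\bT_{\lambda,h}(\btau-\boldsymbol{\Lambda}_h\btau)$.
Equivalently, keep your splitting but rewrite $(\bT_\lambda-\bT_{\lambda,h})\boldsymbol{\Lambda}_h\btau=(\bT_\lambda-\bT_{\lambda,h})\btau-(\bT_\lambda-\bT_{\lambda,h})(\btau-\boldsymbol{\Lambda}_h\btau)$.

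For the first term, note that $a((\bT_{\lambda}-\bT_{\lambda,h})\btau,\btau_h)=0$ for all $\btau_h\in\mathbf{X}_{h,0}$. Since $a(\cdot,\cdot)$ is continuous and coercive on $\mathbf{X}_0$ with $\lambda$-independent constants, C\'ea's lemma gives $\|(\bT_{\lambda}-\bT_{\lambda,h})\btau\|_{\bdiv,\O}\le C\inf_{\btau_h\in\mathbf{X}_{h,0}}\|\bT_\lambda\btau-\btau_h\|_{\bdiv,\O}$. Here $\bT_\lambda\btau=\xi\btau$ lies in $\boldsymbol{E}_\lambda(\mathbf{X}_0)$. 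The trace-corrected Raviart--Thomas interpolant, together with $\btau\in\boldsymbol{\mathcal{H}}^r(\O)$ and $\bdiv\btau\in\H^{1+r}(\O)^d$, then makes this $O(h^r)$.

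For the second term, the uniform bound $\|\bT_{\lambda,h}\boldsymbol{f}\|_{\bdiv,\O}\le C\|\boldsymbol{f}\|_{0,\O}$ gives at most $C\|\btau-\boldsymbol{\Lambda}_h\btau\|_{0,\O}\le Ch^r\|\btau\|_{\bdiv,\O}$.

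With this change, your trace correction and your appeal to the Assumption for $\lambda$-uniformity carry over unchanged. The paper itself gives no details beyond deferring to \cite[Lemma 5.3]{LoMoRo_SIAM2010}.
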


Now we are in position to establish our first result of error estimates for the eigenfunctions and eigenvalues.
\begin{theorem}
\label{thm:error1}
There exist positive constants $h_0,\lambda_0$ and $C$ such that for all $h<h_0$ and for all $\lambda<\lambda_0$,  such that 
\begin{equation}
\label{eq:gap_error}
\hdel(\boldsymbol{F}_{\lambda,h}(\mathbf{X}_0),\boldsymbol{E}_{\lambda}(\mathbf{X}_0))\leq Ch^{\min\{r,k\}},
\end{equation}
and 
\begin{equation}
\label{eq:error_xi}
|\xi-\xi_h^{(i)}|\leq Ch^{\min\{r,k\}}\quad i=1,2,\ldots, m,
\end{equation}
where $k\geq 0$ is the polynomial degree of approximation.
\end{theorem}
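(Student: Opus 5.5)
The plan is to invoke the abstract framework of \cite{MR483401}, in the adaptation of \cite[Theorems 5.1 and 5.2]{LoMoRo_SIAM2010}. Two ingredients are needed: the uniform resolvent bound \eqref{eq:bound_Bh}, and the estimate of the preceding lemma for $\|(\bT_{\lambda}-\boldsymbol{B}_{\lambda,h})|_{\boldsymbol{E}_{\lambda}(\mathbf{X}_0)}\|$.

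For the gap estimate \eqref{eq:gap_error}, I would first bound $\delta(\boldsymbol{E}_{\lambda}(\mathbf{X}_0),\boldsymbol{F}_{\lambda,h}(\mathbf{X}_0))$. Take $\btau\in\boldsymbol{E}_{\lambda}(\mathbf{X}_0)$ with unit norm. Then $\btau-\boldsymbol{F}_{\lambda,h}\btau=(\boldsymbol{E}_{\lambda}-\boldsymbol{F}_{\lambda,h})\btau$, and the preceding lemma bounds this by $Ch^r$. The actual rate is limited by the regularity of the eigenfunctions through the interpolation estimates for $\bPi_h^k$. By Remark \ref{daniel0}, $\brho\in\boldsymbol{\mathcal{H}}^r(\O)$ and $\bdiv\brho\in\H^{1+r}(\O)^d$, so the RT interpolation error in the $\bdiv$ norm behaves like $h^{\min\{r,k+1\}}$ for the tensor part and like $h^{\min\{1+r,k+1\}}$ for the divergence part. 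I would carry the polynomial degree explicitly in that bound, since this is where the exponent $\min\{r,k\}$ enters. The unit-norm normalization should be taken in the norm used by the gap, with the $\bdiv$ norm bounding it. The reverse direction, $\delta(\boldsymbol{F}_{\lambda,h}(\mathbf{X}_0),\boldsymbol{E}_{\lambda}(\mathbf{X}_0))$, follows from the standard fact that the two spaces have the same dimension $m$ for $h<h_0$ (by spectral convergence and the spectral separation results). In finite dimensions, one directional gap smaller than one controls the other up to a constant, as in \cite[Theorem 1]{MR483401}.

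For the eigenvalue estimate \eqref{eq:error_xi}, I would use \cite[Theorem 2]{MR483401}. With $\widehat{\xi}_h:=m^{-1}\sum_i\xi_h^{(i)}$, it gives
\[
|\xi-\widehat{\xi}_h|\le \frac{1}{m}\sum_{j=1}^m|((\bT_{\lambda}-\boldsymbol{B}_{\lambda,h})\boldsymbol{\phi}_j,\boldsymbol{\phi}_j^{*})|+C\|(\bT_{\lambda}-\boldsymbol{B}_{\lambda,h})|_{\boldsymbol{E}_{\lambda}(\mathbf{X}_0)}\|^2 ,
\]
where $\{\boldsymbol{\phi}_j\}$ is a basis of the eigenspace and $\{\boldsymbol{\phi}_j^*\}$ is its dual basis. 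Since $\bT_{\lambda}$ is selfadjoint with respect to $a(\cdot,\cdot)$ and the eigenvalue is nondefective, the bound for each individual $\xi_h^{(i)}$ follows from the same argument, using that the ascent is one. Both terms on the right are bounded by $Ch^{\min\{r,k\}}$: the first by continuity of the pairing together with the preceding lemma, the second trivially, since its square is even smaller. I will not attempt the doubled rate here, so the first-order bound suffices.

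The main obstacle is uniformity in $\lambda$. All constants must be independent of $\lambda$ for $\lambda$ in the admissible range. This will rest on three things: the Assumption on $\widehat{r},\widehat{C}$; the fact that \eqref{eq:bound_Bh} holds uniformly for $\lambda<\lambda_0$ via Lemmas \ref{lmm:resolvent_bounded} and \ref{lmm:discrete_resolvent_bounded}; and the $\lambda$-independence of Lemma \ref{eq:P1}. I would check that no hidden $\lambda$ dependence enters through the norm equivalence \eqref{eq:second_bound}. Working throughout in the $\bdiv$ norm, where the continuity constants of $a$ and $b$ are $\lambda$-independent, avoids this.
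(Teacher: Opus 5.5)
Your proposal follows the same route as the paper, whose proof simply defers to the adaptation in \cite{LoMoRo_SIAM2010} of \cite{MR483401}. That framework is exactly what you sketch: the one-sided gap from $\|(\boldsymbol{E}_{\lambda}-\boldsymbol{F}_{\lambda,h})|_{\boldsymbol{E}_{\lambda}(\mathbf{X}_0)}\|\le Ch^{r}$, the reverse direction from $\dim\boldsymbol{F}_{\lambda,h}(\mathbf{X}_0)=m$, and the eigenvalue bound from the restricted operator estimate with ascent one. Two small corrections: first, your argument actually yields $h^{r}$, because the interpolation rates you quote reduce to $\min\{r,k+1\}=r$ when $r\le 1$; so $\min\{r,k\}$ does not ``enter'' through the degree, it is only a weakening valid for $h\le 1$ (and vacuous for $k=0$). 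Second, the paper measures the gap in $\|\cdot\|_{0,\O}$, so normalizing there needs $\|\btau\|_{\bdiv,\O}\le C\|\btau\|_{0,\O}$ on both eigenspaces; this follows from $\mu\varrho^{-1}\|\bdiv\btau\|_{0,\O}^2=\kappa\, b(\btau,\btau)$ and its discrete counterpart.
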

\begin{proof}
The proofs of these estimates are adapted in \cite{LoMoRo_SIAM2010} according to \cite{MR483401}. More precisely, the proof for \eqref{eq:gap_error} follows from \cite[Theorem]{LoMoRo_SIAM2010} whereas the proof of \eqref{eq:error_xi} is identical to the proof of \cite[Lemma 5.6]{LoMoRo_SIAM2010}. 
\end{proof}

Let us recall that the eigenvalue $\xi$ of $\bT_{\lambda}$ and the eigenvalue  $\kappa$ that solves problem \eqref{eq:spectral_continuous_H0} are such that $\xi:=(1+\kappa)^{-1}$ and the same for their discrete counterparts. This relation, together with \eqref{eq:error_xi} implies that for $\kappa$ and $\kappa_h^{(i)}$ the order of convergence is also $\mathcal{O}(h^{\min\{r,k\}})$. However, the order of convergence for $|\kappa-\kappa_h^{(i)}|$ can be improved as is stated in the following result.
\begin{theorem}
\label{thm:double_order}
There exists a constant $C>0$ independent of $h$ and $\lambda$ such that for $h<h_0$, the following estimate holds
$$
|\kappa-\kappa_h^{(i)}|\leq Ch^{2\min\{r,k\}},\quad i=1,\ldots,m,
$$
where $k\geq 0$ is the polynomial degree of approximation and $r$ is the regularity parameter of the eigenspace.
\end{theorem}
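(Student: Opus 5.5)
The plan is to use the standard algebraic identity for symmetric eigenvalue problems with conforming Galerkin discretizations. Let $(\kappa_h,\brho_h)=(\kappa_h^{(i)},\brho_h)$ be a discrete eigenpair of \eqref{eq:spectral_discrete}. By \eqref{eq:gap_error} of Theorem \ref{thm:error1}, we can choose an eigenfunction $\brho\in\boldsymbol{E}_{\lambda}(\mathbf{X}_0)$ of \eqref{eq:spectral_continuous_H0}, associated with $\kappa$, such that
\[
\|\brho-\brho_h\|_{\bdiv,\O}\leq Ch^{\min\{r,k\}}.
\]
We normalize $b(\brho_h,\brho_h)=1$. Since $\mathbf{X}_{h,0}\subset\mathbf{X}_0$ and $a(\cdot,\cdot)$, $b(\cdot,\cdot)$ are symmetric, expanding $a(\brho-\brho_h,\brho-\brho_h)$ and using $a(\brho,\btau)=(\kappa+1)b(\brho,\btau)$ with $\btau=\brho_h$ and $\btau=\brho$, together with $a(\brho_h,\brho_h)=(\kappa_h+1)b(\brho_h,\brho_h)$, gives
\[
a(\brho-\brho_h,\brho-\brho_h)-(\kappa+1)\,b(\brho-\brho_h,\brho-\brho_h)=(\kappa_h-\kappa)\,b(\brho_h,\brho_h).
\]
Hence
\[
|\kappa-\kappa_h|\,b(\brho_h,\brho_h)\leq |a(\brho-\brho_h,\brho-\brho_h)|+(\kappa+1)\,|b(\brho-\brho_h,\brho-\brho_h)|.
\]

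Next we bound the right-hand side. The continuity bounds for $a(\cdot,\cdot)$ and $b(\cdot,\cdot)$ stated after \eqref{eq:spectral_continuous} have constants independent of $\lambda$. They give a bound by $C\|\brho-\brho_h\|_{\bdiv,\O}^2$, which by \eqref{eq:gap_error} is at most $Ch^{2\min\{r,k\}}$. For the $\lambda$-uniformity of the factor $\kappa+1$, we use the separation of the spectrum: for $\lambda$ large, $\kappa$ stays close to $\kappa_\infty$, hence it is bounded.

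It remains to bound $b(\brho_h,\brho_h)$ from below uniformly in $h$ and $\lambda$, so that it can be divided out. The normalization $b(\brho_h,\brho_h)=1$ does this directly. We then need the eigenfunction approximation in the $\bdiv$-norm for this normalized family. Since the gap in Theorem \ref{thm:error1} is measured in $\L^2$ while the expansion needs $\bdiv$, we first derive the $\bdiv$-estimate for $\brho-\brho_h$. It follows from $\|(\bT_{\lambda}-\boldsymbol{B}_{\lambda,h})|_{\boldsymbol{E}_{\lambda}(\mathbf{X}_0)}\|\leq Ch^r$, which is measured in the $\bdiv$-norm, combined with the interpolation estimates for $\bPi_h^k$ and the regularity from Remark \ref{daniel0}. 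The power is $\min\{r,k\}$ as in \eqref{eq:gap_error}.

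The main obstacle is exactly this step: obtaining $\|\brho-\brho_h\|_{\bdiv,\O}\leq Ch^{\min\{r,k\}}$ with $C$ independent of $\lambda$. This requires the regularity assumption on $\widehat{r}$ and $\widehat{C}$ in Lemma \ref{lmm:add_eigen}, and the $\lambda$-uniform bound \eqref{eq:bound_Bh} on the spectral projectors $\boldsymbol{F}_{\lambda,h}$. Once this holds, the identity above finishes the proof.
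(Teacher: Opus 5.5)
Your proposal is correct and follows essentially the same route as the paper. Both use the algebraic identity \eqref{eq:padra} (which you state correctly with the factor $\kappa+1$; the paper's version with $\kappa$ differs only by the harmless term $b(\brho-\brho_h,\brho-\brho_h)$), the $\lambda$-uniform continuity of $a(\cdot,\cdot)$ and $b(\cdot,\cdot)$, the $\bdiv$-norm eigenfunction estimate, and a uniform lower bound on $b(\brho_h,\brho_h)$.

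The only difference is the normalization. The paper fixes $\|\brho_h\|_{\bdiv,\O}=1$ and uses the coercivity \eqref{eq:coercive_a} to get $b(\brho_h,\brho_h)\geq c/(\kappa_h^{(i)}+1)$. Your choice $b(\brho_h,\brho_h)=1$ moves that same coercivity argument to a step you left implicit: you need $\|\brho_h\|_{\bdiv,\O}^2\lesssim a(\brho_h,\brho_h)=\kappa_h^{(i)}+1$ to apply the gap estimate to your normalized $\brho_h$. This bound must go through $a(\cdot,\cdot)$, since $b(\cdot,\cdot)$ alone is not coercive uniformly in $\lambda$.
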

\begin{proof}
Let $(\kappa,\boldsymbol{\rho})\in\mathbb{R}^+\times\mathbf{X}_0$ and $(\kappa_h^{(i)},\boldsymbol{\rho}_h)\in\mathbb{R}^+\times\mathbf{X}_{h,0}$ with $i\in\{1,\ldots, m\}$ be the eigensolutions of problems  
\eqref{eq:spectral_continuous_H0} and \eqref{eq:spectral_discrete}, respectively. Let us suppose that $\|\boldsymbol{\rho}_h\|_{\bdiv,\O}=1$. Now consider the following well known 
algebraic identity
\begin{equation}
\label{eq:padra}
(\kappa_h^{(i)}-\kappa)b(\boldsymbol{\rho}_h,\boldsymbol{\rho}_h)=a(\boldsymbol{\rho}-\boldsymbol{\rho}_h,\boldsymbol{\rho}-\boldsymbol{\rho}_h)-\kappa b(\boldsymbol{\rho}-\boldsymbol{\rho}_h,\boldsymbol{\rho}-\boldsymbol{\rho}_h).
\end{equation}
 Applying modulus on both sides of \eqref{eq:padra} we have, in one hand
 \begin{multline}
 \label{eq:padra1}
 |a(\boldsymbol{\rho}-\boldsymbol{\rho}_h,\boldsymbol{\rho}-\boldsymbol{\rho}_h)|\leq \mu\varrho^{-1}\|\bdiv(\boldsymbol{\rho}-\boldsymbol{\rho}_h)\|_{0,\O}^2 \hspace{-.1cm}+
\left(\left(1+\frac{\sqrt{d}}{d}\right)^2+\frac{\sqrt{d}}{(d+1)\mu}\right)\|\boldsymbol{\rho}-\boldsymbol{\rho}_h\|_{0,\O}^2\\
\leq C\max\left\{\mu\varrho^{-1},\left(1+\frac{\sqrt{d}}{d}\right)^2+\frac{\sqrt{d}}{(d+1)\mu}\right\} h^{\min\{r,k\}},
 \end{multline}
 where we have used the approximation of the eigenfunctions. Now, for the remaining term we have and using similar arguments as for the previous estimate, we have 
 \begin{multline}
 \label{eq:padra2}
 |b(\boldsymbol{\rho}-\boldsymbol{\rho}_h,\boldsymbol{\rho}-\boldsymbol{\rho}_h)|\leq \|(\boldsymbol{\rho}-\boldsymbol{\rho}_h)^{\texttt{d}}\|_{0,\O}^2+\frac{\|\tr(\boldsymbol{\rho}-\boldsymbol{\rho}_h)\|_{0,\O}^2}{(d+1)\mu}\\
 \leq \left\{\left( 1+\frac{\sqrt{d}}{d}\right)^2 +\frac{d}{(d+1)\mu}\right\}\|\boldsymbol{\rho}-\boldsymbol{\rho}_h\|_{0,\O}^2\\
 \leq C\left\{\left( 1+\frac{\sqrt{d}}{d}\right)^2 +\frac{d}{(d+1)\mu}\right\}h^{2\min\{r,k\}}.
 \end{multline}
 Finally, invoking the discrete eigenvalue problem $a(\boldsymbol{\rho}_h,\boldsymbol{\rho}_h)=(\kappa_h^{(i)}+1)b(\boldsymbol{\rho}_h,\boldsymbol{\rho}_h)$, the coercivity of $a(\cdot,\cdot)$ given in \eqref{eq:coercive_a} we obtain
 \begin{equation*}
 b(\boldsymbol{\rho}_h,\boldsymbol{\rho}_h)=\frac{a(\boldsymbol{\rho}_h,\boldsymbol{\rho}_h)}{\kappa_h^{(i)}+1}
 \geq\frac{2^{-1}\min\{\mu\varrho^{-1},1\}\min\{c_1,1\}\|\boldsymbol{\rho}_h\|_{\bdiv,\O}}{\kappa_h^{(i)}+1},
 \end{equation*}
 and hence, invoking Theorem \ref{thm:error1} we know that  $\kappa_h^{(i)}\rightarrow \kappa$ as $h\rightarrow 0$ and from the assumption $\|\boldsymbol{\rho}_h\|_{\bdiv,\O}=1$, we have
  \begin{equation}
  \label{eq:padra3}
 b(\boldsymbol{\rho}_h,\boldsymbol{\rho}_h)\geq\frac{2^{-1}\min\{\mu\varrho^{-1},1\}\min\{c_1,1\}}{\kappa+1}>0,
 \end{equation}
 where the constant involved is independent of $h$. Hence, gathering \eqref{eq:padra1}, \eqref{eq:padra2}, and \eqref{eq:padra3} and replacing them in \eqref{eq:padra} we conclude the proof.
\end{proof}
%
%
\section{A posteriori error analysis}
\label{sec:apost}
\cblue{This section is devoted to the development of novel residual-based robust a posteriori error estimators for the eigenvalue problems in its nearly incompressible regime   and the perfectly  incompressible case.  The main aim is to prove that the proposed estimator  is equivalent  with the error. Let us mention that on the forthcoming analysis we will focus only on eigenvalues with multiplicity
1. To perform the analysis of the error indicators, we begin with some definitions and elements of the mesh that are necessary. For any element  $T\in \CT_h$, we denote by $\CE_{T}$ the set of facets of $T$
and 
$$\CE_h:=\bigcup_{T\in\CT_h}\CE_{T}.$$
We decompose $\CE_h=\CE_{\O}\cup\CE_{\partial\O}$,
where  $\CE_{\partial\O}:=\{\ell\in \CE_h:\ell\subset \partial\O\}$
and $\CE_{\O}:=\CE\backslash\CE_{\partial \O}$. 
On the other hand, for each face/edge $e\in\mathcal{E}_h$ we fix a unit normal vector $\bn_e$ to $e$. Moreover, given $\btau\in\boldsymbol{\mathcal{H}}(\curl, \Omega)$ and $e\in\mathcal{E}_h$, we let $\jumpp{\btau\times\bn_e}$ be the corresponding jump of the tangential traces across $e$, that is
\begin{equation*}
\jumpp{\btau\times\bn_e}:=(\btau|_T-\btau|_{T'})\big|_e\times\bn_e,
\end{equation*} 
where $T$ and $T'$ are two elements of the triangulation with common edge/face $e$.}


%
\subsection{Residual-based a posteriori error estimator}
\cred{In this section we derive a new residual-based a posteriori error estimator for the proposed mixed formulation. The construction relies on the strong form of the discrete problem together with suitable element and edge residuals associated with the pseudostress approximation.}
\subsubsection{Definition of error indicator for nearly incompressible elasticity}
We begin by introducing the local element residual on each element, denoted by $\eta_{K}$, together with the measure of the mismatch between solutions on adjacent elements across their interfaces, denoted by $\eta_{J}$, as follows:
\begin{equation*}
	\eta_{K}^2:= h_K^2\|\rho_R\textbf{R}_{1,T}\|_{0,T}^2+h_K^2\|\rho_R R_{2,T}\|_{0,T}^2,\quad \eta_{J}^2:=h_E\|\rho_EJ_{1,\ell} \|_{0,\ell}^2+h_E\|\rho_EJ_{2,\ell} \|_{0,\ell}^2,
\end{equation*}
where 
$$
	\textbf{R}_{1,T} :=\mu\nabla(\varrho^{-1}\bdiv\brho_h)+\kappa_h 
	\brho^d_h, 
	\quad R_{2,T} := \rot\left(\kappa_h
	\brho^d_h
	\right). 
$$
with $\brho^d_h= \brho_h-\frac{\lambda+\mu}{d\lambda+(d+1)\mu}\tr(\brho_h)\mathbf{I}$.
We define the jump contributions by
{\small{\begin{align*}
	J_{1,\ell} &:= \begin{cases}
		\displaystyle \frac{1}{2}[\![\mu(\varrho^{-1}\bdiv\brho_h)\bn]\!], & E\in \Omega\cap\mathcal{E}_h\\
		\mu(\varrho^{-1}\bdiv\brho_h) \bn, & E\in \Gamma\cap\mathcal{E}_h\\
	\end{cases};\;
	J_{2,\ell} := \begin{cases}
		\displaystyle \frac{1}{2}\left[\!\left[\left(\kappa_h\brho^d_h
		\right)\times\bn\right]\!\right], & E\in \Omega\cap\mathcal{E}_h\\
		\left(\kappa_h\brho^d_h
		\right)\times \bn, & E\in \Gamma\cap\mathcal{E}_h.
	\end{cases}
\end{align*}}}
In this context, the parameters $\rho_R$ and $\rho_E$ are specified by
$$
\rho_R:=  (\mu \varrho^{-1})^{-\frac 12}, \ \    	\hspace{-0.1cm}\rho_E := \left(\sqrt{2}\right)^{-1}\min\left\{(\mu \varrho^{-1})^{-1/2}, \left(\kappa_h\left(\frac{1}{d}-\frac{\lambda+\mu}{d\lambda+(d+1)\mu}\right)\right)^{-\frac 12}\right\}.  
$$
\subsubsection{Definition of error indicator for incompressible elasticity}
In the incompressible case,  the local element residual on each element, denoted by $\eta_{K,\infty}$, together with the measure of the mismatch between solutions on adjacent elements across their interfaces, denoted by $\eta_{J,\infty}$ is  as follows:
\begin{align*}
	\eta_{K,\infty}^2&:= h_K^2\|\tilde{\rho}_{R}\textbf{R}_{1,T}\|_{0,T}^2+h_K^2\|\tilde{\rho}_{R} R_{2,T}\|_{0,T}^2,\;\;
	 \eta_{J,\infty}^2:=h_E\|\tilde{\rho}_{E}J_{1,\ell} \|_{0,\ell}^2+h_E\|\tilde{\rho}_{E}J_{2,\ell} \|_{0,\ell}^2,
\end{align*}
where 
$$
	\textbf{R}_{1,T}:=\mu\nabla(\varrho^{-1}\bdiv\brho_h)+\kappa_h\brho^d_{h,\infty},\quad
	R_{2,T} := \rot\left(\kappa_h\brho^d_{h,\infty}\right). 
$$
with $\brho^d_{h,\infty}=\brho_h-\frac{1}{d}\tr(\brho_h)\mathbf{I}$.
We define the jump contributions by
{\small \begin{align*}
	J_{1,\ell} &:= \begin{cases}
		\displaystyle \frac{1}{2}[\![\mu(\varrho^{-1}\bdiv\brho_h)\bn]\!], & E\in \Omega\cap\mathcal{E}_h\\
		\mu(\varrho^{-1}\bdiv\brho_h) \bn, & E\in \Gamma\cap\mathcal{E}_h\\
	\end{cases};\;
	J_{2,\ell} := \begin{cases}
		\displaystyle \frac{1}{2}\left[\!\left[\left(\kappa_h\brho^d_{h,\infty}\right)\times\bn\right]\!\right], & E\in \Omega\cap\mathcal{E}_h\\
		\left(\kappa_h \brho^d_{h,\infty}\right)\times \bn, & E\in \Gamma\cap\mathcal{E}_h.
	\end{cases}
\end{align*}}
In this context, the parameters $\rho_{R,\infty}$ and $\rho_{E,\infty}$  are specified by
$$
\rho_{R,\infty}:=  (\mu \varrho^{-1})^{-1/2}, \ \    	\rho_{E,\infty} := \left(\sqrt{2}\right)^{-1}\min\left\{(\mu \varrho^{-1})^{-1/2}, \left(\kappa_h/d\right)^{-1/2}\right\}.  
$$

\subsection{Reliability}
{We are now in a position to establish the reliability of the residual-based error estimator introduced above.}
\begin{theorem}
Let $(\kappa,\brho)\in \mathbb{R}^+\times\mathbf{X}_0$ be the solution of \eqref{eq:spectral_continuous}, and let $(\kappa_h,\brho_h)\in \mathbb{R}^+\times\mathbf{X}_{0,h}$ denote the corresponding discrete solution of \eqref{eq:spectral_discrete}. Then there exist positive constants $C$ and $h_0$, independent of $h$ and $\kappa$, such that, for all $h<h_0$, the following estimates hold:
\begin{align*}
||\brho-\brho_h||_{\div,\Omega}&\le C (\eta + \|(\kappa\brho-\kappa_h\brho_h)\|_{0,\Omega}+\|(\brho-\brho_h)\|_{0,\Omega}),\\
|\kappa-\kappa_h|& \le C (\eta^2 + \|(\kappa\brho-\kappa_h\brho_h)\|_{0,\Omega}^2+\|(\brho-\brho_h)\|_{0,\Omega}^2).
\end{align*} 

\end{theorem}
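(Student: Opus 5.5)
We follow the standard route for residual estimators of mixed eigenproblems: reduce the eigenfunction error to the residual of the discrete equation tested against continuous tensors, and control that residual with a Helmholtz decomposition and quasi-interpolation.

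\textbf{Step 1: error equation.} Set $\mathbf{e}:=\brho-\brho_h\in\mathbf{X}_0$. Coercivity \eqref{eq:coercive_a} gives
\begin{equation*}
\|\mathbf{e}\|_{\bdiv,\O}\le C\sup_{\btau\in\mathbf{X}_0}\frac{a(\mathbf{e},\btau)}{\|\btau\|_{\bdiv,\O}},
\end{equation*}
with $C$ independent of $\lambda$. Using \eqref{eq:spectral_continuous_H0} we rewrite
\begin{equation*}
a(\mathbf{e},\btau)=b(\kappa\brho-\kappa_h\brho_h,\btau)+b(\brho-\brho_h,\btau)+\mathcal{R}_h(\btau),
\end{equation*}
where
\begin{equation*}
\mathcal{R}_h(\btau):=(\kappa_h+1)b(\brho_h,\btau)-a(\brho_h,\btau).
\end{equation*}
By the continuity of $b$, the first two terms are bounded by $C(\|\kappa\brho-\kappa_h\brho_h\|_{0,\O}+\|\brho-\brho_h\|_{0,\O})\|\btau\|_{\bdiv,\O}$. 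Also $\mathcal{R}_h(\btau_h)=0$ for every $\btau_h\in\mathbf{X}_{h,0}$ by \eqref{eq:spectral_discrete}.

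\textbf{Step 2: Helmholtz decomposition of the test function.} Decompose $\btau=\nabla\boldsymbol{z}+\curl\boldsymbol{\chi}$ row-wise, with $\boldsymbol{z}\in\H^2$-type regularity (or $\H^1$ with a bounded $\H^1$ gradient projection, as in the Gatica-type residual estimators for pseudostress) and $\boldsymbol{\chi}\in\H^1$, both bounded by $\|\btau\|_{\bdiv,\O}$. We then choose the discrete test function
\begin{equation*}
\btau_h:=\bPi_h^k(\nabla\boldsymbol{z})+\curl(I_h\boldsymbol{\chi}),
\end{equation*}
where $I_h$ is the Clément operator, and correct by a multiple of $\mathbf{I}$ so that $\btau_h\in\mathbf{X}_{h,0}$. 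Galerkin orthogonality then gives $\mathcal{R}_h(\btau)=\mathcal{R}_h(\btau-\btau_h)$.

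\textbf{Step 3: elementwise integration by parts.}
\begin{itemize}
\item For the gradient part, the $\mu\varrho^{-1}\bdiv\brho_h\cdot\bdiv$ term is integrated by parts elementwise. This produces $\mathbf{R}_{1,T}$ and the normal jumps $J_{1,\ell}$, the latter including boundary facets because the continuous $\bdiv\brho$ vanishes on $\Gamma$.
\item For the curl part, $\bdiv\curl=0$, so only the $b$-term survives. Integrating $\kappa_h\brho^d_h:\curl(\boldsymbol{\chi}-I_h\boldsymbol{\chi})$ by parts yields $R_{2,T}=\rot(\kappa_h\brho_h^d)$ and the tangential jumps $J_{2,\ell}$.
\end{itemize}
The Clément and Raviart–Thomas approximation estimates, $\|\boldsymbol{\chi}-I_h\boldsymbol{\chi}\|_{0,T}\le Ch_T|\boldsymbol{\chi}|_{1,\omega_T}$ and the trace version $h_e^{1/2}$, together with Cauchy–Schwarz, give $|\mathcal{R}_h(\btau)|\le C\eta\|\btau\|_{\bdiv,\O}$. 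The weights $\rho_R,\rho_E$ absorb $\mu\varrho^{-1}$ and $\kappa_h$, keeping the constants $\lambda$-robust. Note that $(1/d-(\lambda+\mu)/(d\lambda+(d+1)\mu))$ stays bounded. This proves the first estimate.

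\textbf{Step 4: eigenvalue bound.} We use the identity \eqref{eq:padra}:
\begin{equation*}
|\kappa-\kappa_h|\,b(\brho_h,\brho_h)\le C\|\mathbf{e}\|_{\bdiv,\O}^2.
\end{equation*}
Combined with the lower bound \eqref{eq:padra3}, which holds for $h<h_0$ by Theorem \ref{thm:error1}, and with the square of the first estimate, this gives the second estimate.

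\textbf{Main obstacle.} The hardest step is Step 2. It requires the Helmholtz decomposition on a possibly nonconvex polygonal or polyhedral domain, compatible with the zero-mean-trace constraint of $\mathbf{X}_0$ and with the implicit boundary condition $\varrho^{-1}\bdiv\brho=0$ on $\Gamma$. I would first try the decomposition $\btau=\nabla\boldsymbol{z}+\curl\boldsymbol{\chi}$ with $\boldsymbol{z}$ solving an auxiliary problem on a convex ball containing $\O$, as done in the Gatica–Carstensen pseudostress estimators. In 3D, $\curl\boldsymbol{\chi}$ is replaced by a tensor curl with a Nédélec quasi-interpolant.
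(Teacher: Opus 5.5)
Your proposal is correct and follows essentially the same route as the paper. Both arguments use coercivity of $a(\cdot,\cdot)$ on $\mathbf{X}_0$, split off the $b$-terms in $\kappa\brho-\kappa_h\brho_h$ and $\brho-\brho_h$, and apply Galerkin orthogonality against an interpolant built from the Helmholtz decomposition $\nabla\boldsymbol{z}+\curl\boldsymbol{\chi}$; your trace correction by a multiple of $\mathbf{I}$ is harmless, since $(\kappa_h+1)b(\brho_h,\mathbf{I})-a(\brho_h,\mathbf{I})=0$. Both then integrate by parts elementwise to produce $\mathbf{R}_{1,T}$, $R_{2,T}$, $J_{1,\ell}$, $J_{2,\ell}$, and bound the eigenvalue error via \eqref{eq:padra} with \eqref{eq:padra3}. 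The only cosmetic difference is that you bound a supremum over general test tensors instead of testing directly with $\brho-\brho_h$, which is equivalent by coercivity.
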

\begin{proof}
The coercivity estimate \eqref{eq:coercive_a} implies that 
\begin{align*}
	\frac{1}{2}\min\{\mu\varrho^{-1}, c_1,1\}||\brho-\brho_h||_{\div,\Omega}^2&\le {a}(\brho-\brho_h,\brho-\brho_h),\\
	&=(\kappa+1) b(\brho,\brho-\brho_h)-(\kappa_h+1) b(\brho_h,\brho-\brho_h)\\
	&\quad+(\kappa_h+1) b(\brho_h,\brho-\brho_h)-{a}(\brho_h,\brho-\brho_h).
\end{align*}
From the definition of the bilinear form $b(\cdot,\cdot)$ and and by applying the Cauchy--Schwarz's inequality, we \cblue{obtain}
\begin{align*}
	|(\kappa+1) b(\brho,\brho-\brho_h)-&(\kappa_h+1) b(\brho_h,\brho-\brho_h)|\\
	&\le C(\|(\kappa\brho-\kappa_h\brho_h)\|_{0,\Omega}+\|(\brho-\brho_h)\|_{0,\Omega})\|\btau\|_{0,\Omega}.
\end{align*}
To derive an estimate for the term 
$(\kappa_h+1) b(\brho_h,\brho-\brho_h)-{a}(\brho_h,\brho-\brho_h)$,
we first augment it with
\begin{align*}
	{a}(\brho_h,I_h(\brho-\brho_h))- (\kappa_h+1) b(\brho_h,I_h(\brho-\brho_h))=0,
\end{align*}
and subsequently employ element-wise integration by parts on $\brho-\brho_h- I_h(\brho-\brho_h)$ in conjunction with the Helmholtz decomposition. Consequently, we obtain 
\begin{multline*}
	(\kappa_h+1) b(\brho_h,\brho-\brho_h-I_h(\brho-\brho_h))-{a}(\brho_h,\brho-\brho_h-I_h(\brho-\brho_h))\\
	=\int_{\O}(\textbf{R}_{1,T} )\cdot (\nabla\bz-I_h(\nabla \bz))
	+\int_{\O} \left(\kappa_h\displaystyle \left\{\brho_h-\frac{\lambda+\mu}{d\lambda+(d+1)\mu}\tr(\brho_h)\mathbf{I} \right\}\right). (\curl (\chi-\chi_h))\\  
	\quad -\sum_{E\in\mathcal{E}(\mathcal{T}_h)\cap\O}\int_E  (\mu(\varrho^{-1}\bdiv\brho_h)\bn)\cdot (\nabla\bz-I_h(\nabla \bz))\\
	\quad-\sum_{E\in\mathcal{E}(\mathcal{T}_h)\cap\Gamma}\int_E(\mu(\varrho^{-1}\bdiv\brho_h)\bn)((\nabla\bz-I_h(\nabla \bz)), 
\end{multline*}
where $\brho-\brho_h= \nabla \bz + \curl \chi$ and $I_h(\brho-\brho_h)= I_h(\nabla \bz)+ \curl \chi_h -d_hI$, for a more detailed discussion see also \cite[Lemma 4.6, Lemma 4.7]{lepe2022posteriori} and \cite[Lemma 4.3]{MR3453481}.
Applying integration by parts together with the Cauchy–Schwarz inequality yields that 
$$
	(\kappa_h+1) b(\brho_h,\brho-\brho_h-I_h(,\brho-\brho_h))-{a}(\brho_h,\brho-\brho_h-I_h(,\brho-\brho_h)) \le C \eta {\|\brho-\brho_h\|_{  \H(\vdiv,\O_f)}}.
$$
The second estimate follows immediately from \eqref{eq:padra} and \eqref{eq:padra3}, in conjunction with the continuity of \(a(\cdot,\cdot)\) and \(b(\cdot,\cdot)\), and the preceding reliability estimate.
\end{proof}
 \subsection{Efficiency}
\cblue{Let us begin by recalling some technical results related to bubble functions. These results are available in, for instance,  \cite{MR1885308,MR3059294}.
\begin{lemma}[Interior bubble functions]
\label{burbujainterior}
For any $T\in \CT_h$, let $\psi_{T}$ be the corresponding interior bubble function.
Then, there holds
\begin{align*}
\|q\|_{0,T}^2&\lesssim \int_{T}\psi_{T} q^2\leq \|q\|_{0,T}^2\qquad \forall q\in \mathbb{P}_k(T),\\
\| q\|_{0,T}&\lesssim \|\psi_{T} q\|_{0,T}+h_K\|\nabla(\psi_{T} q)\|_{0,T}\lesssim \|q\|_{0,T}\qquad \forall q\in \mathbb{P}_k(T),
\end{align*}
where the hidden constants are 
independent of  $h_K$.
\end{lemma}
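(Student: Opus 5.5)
The plan is to pass to the reference element $\widehat T$ through the affine map $F_T:\widehat T\to T$, $F_T(\widehat{\boldsymbol x})=B_T\widehat{\boldsymbol x}+\boldsymbol b_T$, and to exploit that every norm on the finite-dimensional space $\mathbb{P}_k(\widehat T)$ is equivalent to every other. The interior bubble is taken as $\psi_T:=(d+1)^{d+1}\prod_{i=1}^{d+1}\lambda_i$, where the $\lambda_i$ are the barycentric coordinates of $T$. With this normalisation $0\le\psi_T\le 1$, $\psi_T$ vanishes on $\partial T$, and $\psi_T=\widehat\psi\circ F_T^{-1}$ for a fixed reference bubble $\widehat\psi$. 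The same arguments apply componentwise to vector or tensor polynomials.

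\textbf{First chain.} The upper bound $\int_T\psi_T q^2\le\|q\|_{0,T}^2$ is immediate from $\psi_T\le1$. For the lower bound, the map $\widehat q\mapsto(\int_{\widehat T}\widehat\psi\,\widehat q^{\,2})^{1/2}$ is a norm on $\mathbb{P}_k(\widehat T)$, because $\widehat\psi>0$ in the interior, so a polynomial with vanishing weighted integral vanishes on an open set and hence identically. Equivalence of norms then gives $\|\widehat q\|_{0,\widehat T}^2\le \widehat C\int_{\widehat T}\widehat\psi\,\widehat q^{\,2}$. Changing variables multiplies both sides by the same factor $|\det B_T|$, so the constant carries over to $T$ unchanged and is independent of $h_K$.

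\textbf{Second chain, left inequality.} We have $\|q\|_{0,T}^2\lesssim\int_T\psi_T q^2\le\|\psi_T q\|_{0,T}\|q\|_{0,T}$ by Cauchy--Schwarz together with the first chain, hence $\|q\|_{0,T}\lesssim\|\psi_Tq\|_{0,T}$. Adding the nonnegative gradient term preserves the inequality.

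\textbf{Second chain, right inequality.} First, $\|\psi_Tq\|_{0,T}\le\|q\|_{0,T}$ since $0\le\psi_T\le1$. Next, $\psi_Tq\in\mathbb{P}_{k+d+1}(T)$, so the inverse estimate $\|\nabla p\|_{0,T}\lesssim h_K^{-1}\|p\|_{0,T}$ gives $h_K\|\nabla(\psi_Tq)\|_{0,T}\lesssim\|\psi_Tq\|_{0,T}\le\|q\|_{0,T}$.

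\textbf{Main obstacle.} The only delicate point is that all constants are independent of $h_K$. This rests on the inverse estimate, and there the shape-regularity of $\{\CT_h\}$ is essential: it yields $\|B_T\|\lesssim h_K$ and $\|B_T^{-1}\|\lesssim h_K^{-1}$ with $\rho_K\sim h_K$. To verify the inverse estimate, transform to $\widehat T$, use equivalence of the norms $|\widehat p|_{1,\widehat T}\lesssim\|\widehat p\|_{0,\widehat T}$ on $\mathbb{P}_{k+d+1}(\widehat T)$, and scale back. The resulting constants depend only on $k$, $d$ and the shape-regularity constant.
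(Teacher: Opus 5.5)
Your proof is correct. It pulls back to $\widehat T$ by the affine map, uses the equivalence of the weighted norm $\bigl(\int_{\widehat T}\widehat\psi\,\widehat q^{\,2}\bigr)^{1/2}$ with the $\mathrm{L}^2$ norm on $\mathbb{P}_k(\widehat T)$, applies Cauchy--Schwarz for the lower half of the second chain, and uses a shape-regularity-based inverse estimate for the gradient term. The paper gives no proof of its own and only cites Ainsworth--Oden and Verf\"urth, whose proof is exactly this standard scaling argument, so your approach is essentially the same as the one the paper relies on.
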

\begin{lemma}[Facet bubble functions]
\label{burbuja}
For any $T\in \CT_h$ and $\ell\in\CE_{T}$, let $\psi_{\ell}$
be the corresponding facet bubble function. Then, there holds
 \begin{equation*}
\|q\|_{0,\ell}^2\lesssim \int_{\ell}\psi_{\ell} q^2 \leq \|q\|_{0,\ell}^2\qquad
\forall q\in \mathbb{P}_k(\ell).
\end{equation*}
Moreover, for all $q\in\mathbb{P}_k(\ell)$, there exists an extension of  $q\in\mathbb{P}_k(T)$ (again denoted by $q$) such that
 \begin{align*}
h_K^{-1/2}\|\psi_{\ell} q\|_{0,T}+h_K^{1/2}\|\nabla(\psi_{\ell} q)\|_{0,T}&\lesssim \|q\|_{0,\ell},
\end{align*}
where the hidden constants are independent of  $h_K$.
\end{lemma}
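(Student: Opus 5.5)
The statement to prove is Lemma~\ref{burbuja} (facet bubble functions). I will prove it by the standard reference-element scaling argument: establish each estimate on a fixed reference simplex $\widehat T$ with reference facet $\widehat\ell$, then transport it to $T$ and $\ell$ via the affine map $F_T(\widehat x)=B_T\widehat x+b_T$.

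\textbf{Step 1 (reference estimates).} Define $\widehat\psi_{\widehat\ell}$ as the product of the barycentric coordinates of the $d$ vertices of $\widehat\ell$, suitably normalized so that $0\le\widehat\psi_{\widehat\ell}\le 1$. It is positive in the relative interior of $\widehat\ell$ and vanishes on $\partial\widehat T\setminus\widehat\ell$. On $\mathbb{P}_k(\widehat\ell)$, the two quantities $q\mapsto(\int_{\widehat\ell}\widehat\psi_{\widehat\ell}q^2)^{1/2}$ and $q\mapsto\|q\|_{0,\widehat\ell}$ are both norms, since a polynomial vanishing on an open subset of $\widehat\ell$ vanishes identically. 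Finite dimensionality then gives
\[
\|q\|_{0,\widehat\ell}^2\lesssim\int_{\widehat\ell}\widehat\psi_{\widehat\ell}q^2 .
\]
The upper bound $\int_{\widehat\ell}\widehat\psi_{\widehat\ell}q^2\le\|q\|_{0,\widehat\ell}^2$ is immediate from $\widehat\psi_{\widehat\ell}\le1$.

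For the extension, take $\widehat E q$ to be the polynomial that is constant along the direction joining $\widehat\ell$ to the opposite vertex. This is a linear map $\mathbb{P}_k(\widehat\ell)\to\mathbb{P}_k(\widehat T)$. The map $q\mapsto \widehat\psi_{\widehat\ell}\widehat E q$ is therefore linear between finite-dimensional spaces, and so
\[
\|\widehat\psi_{\widehat\ell}\widehat Eq\|_{0,\widehat T}+\|\widehat\nabla(\widehat\psi_{\widehat\ell}\widehat Eq)\|_{0,\widehat T}\lesssim\|q\|_{0,\widehat\ell}.
\]

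\textbf{Step 2 (scaling).} Set $\psi_\ell:=\widehat\psi_{\widehat\ell}\circ F_T^{-1}$ and $Eq:=(\widehat E(q\circ F_T))\circ F_T^{-1}$. Since $F_T$ is affine, $\psi_\ell$ is the product of the barycentric coordinates of $T$ at the vertices of $\ell$, so it depends only on $\ell$. Polynomial spaces are preserved under $F_T$.

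Shape regularity provides the scaling relations
\[
|\ell|\sim h_K^{d-1},\qquad |T|\sim h_K^{d},\qquad \|B_T\|\lesssim h_K,\qquad \|B_T^{-1}\|\lesssim h_K^{-1}.
\]
Under the change of variables, $\|v\|_{0,\ell}^2=(|\ell|/|\widehat\ell|)\|\widehat v\|_{0,\widehat\ell}^2$ and $\|v\|_{0,T}^2=(|T|/|\widehat T|)\|\widehat v\|_{0,\widehat T}^2$. The facet norm equivalence is scale invariant, because both sides pick up the same factor $|\ell|/|\widehat\ell|$. For the extension estimate,
\[
\|\psi_\ell q\|_{0,T}\sim h_K^{d/2}\|\cdot\|_{0,\widehat T},\qquad
\|\nabla(\psi_\ell q)\|_{0,T}\lesssim h_K^{d/2-1}\|\widehat\nabla\cdot\|_{0,\widehat T},\qquad
\|q\|_{0,\ell}\sim h_K^{(d-1)/2}\|\cdot\|_{0,\widehat\ell}.
\]
Multiplying by $h_K^{-1/2}$ and $h_K^{1/2}$ respectively, every term scales like $h_K^{(d-1)/2}$, and the reference inequality transfers with constants depending only on $k$, $d$ and the shape regularity.

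\textbf{Main obstacle.} The delicate point is Step 2's uniformity. The equivalence constants must be independent of the particular element and facet, and the extension must be defined consistently through a single reference configuration. A mesh contains several facet positions within $\widehat T$, but the simplex symmetries reduce these to one, so finitely many reference constants suffice. Shape regularity then bounds $\|B_T\|\,\|B_T^{-1}\|$ uniformly, which closes the argument.
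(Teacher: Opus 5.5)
Your proposal is correct and follows the standard reference-element argument: norm equivalence on the finite-dimensional polynomial spaces over $\widehat T$ and $\widehat\ell$, then affine scaling under shape regularity. The paper gives no proof of its own and simply cites \cite{MR1885308,MR3059294}, where the lemma is established in this way. One point to state more precisely: the extension must be constant along a single fixed direction transversal to $\widehat\ell$ (e.g.\ from the barycenter of $\widehat\ell$ to the opposite vertex), because a radial extension from the opposite vertex would be rational rather than polynomial.
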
}

 Firstly, we define $\bv_T= \chi_Th_K^2\rho_R^2\textbf{R}_{1,T}$, where  $\chi_T$ is the element bubble function defined on $T$. Then
\begin{align*}
	h_K^2\|\rho_R\textbf{R}_{1,T}\|_{0,T}^2&\le (\textbf{R}_{1,T} , \bv_T)
	= \underbrace{\int_T  \mu\nabla(\varrho^{-1}\bdiv(\brho_h-\brho))\cblue{:} \bv_T}_{T_1}\\
	&\quad \underbrace{-\int_T  \left(\displaystyle \left\{\kappa_h\brho_h-\kappa \brho-\frac{\lambda+\mu}{d\lambda+(d+1)\mu}(\kappa_h\tr(\brho_h)-\kappa \tr(\brho))\mathbf{I} \right\}\right)\cblue{:} \bv_T}_{T_2}
\end{align*} 
Applying integration by parts to  $T_1$ yields that
\begin{align*}
	T_1&=\int_T  \mu\nabla(\varrho^{-1}\bdiv(\brho_h-\brho))\cblue{:} \bv_T= \int_T   \mu(\varrho^{-1}\bdiv(\brho_h-\brho))\cblue{\cdot}\bdiv\bv_T \\
	&\le \| \mu \varrho^{-1}\bdiv(\brho_h-\brho))\|_{0,T}h_T\|\rho_{R}\textbf{R}_{1,T}\|_{0,T}.
\end{align*}
Finally, to estimate $T_2$, we apply the Cauchy–Schwarz inequality, which yields
\begin{align*}
	T_2&= -\int_T  \left(\displaystyle \left\{\kappa_h\brho_h-\kappa \brho-\frac{\lambda+\mu}{d\lambda+(d+1)\mu}(\kappa_h\tr(\brho_h)-\kappa \tr(\brho))\mathbf{I} \right\}\right)\cblue{:} \bv_T\\
	&\le \|\rho_R(\kappa_h\brho_h-\kappa \brho)\|_{0,T}h_T^2\|\rho_R\textbf{R}_{1,T}\|_{0,T}.
\end{align*}
 Next, we define $$\bv_\ell:=\psi_\ell h_E(\rho_E)^2[\![\mu(\varrho^{-1}\bdiv\brho_h)\bn]\!],$$ where $\psi_\ell$ is the bubble function that satisfies the properties outlined in Lemma \ref{burbuja}. We then proceed to estimate the term $h_E\|\rho_EJ_{\ell}\|_{0,\ell}^2$, leading to  
\begin{align}\label{burbuja11}
	h_E\|\rho_EJ_{1,\ell}\|_{0,\ell}^2&{\le} C([\![\mu(\varrho^{-1}\bdiv\brho_h)\bn]\!], \bv_\ell)_\ell 
\end{align}
Applying integration by parts on $\omega_T$ yields
\begin{align*}
	([\![\mu(\varrho^{-1}\bdiv\brho_h)\bn]\!], \bv_\ell)_\ell =\sum_{T\in\omega_T} &\Big((\mu(\varrho^{-1}\bdiv\brho_h)-\mu(\varrho^{-1}\bdiv\brho)\, \vdiv(\bv_\ell))_T\\
	&+(\textbf{R}_{1,T}, \bv_\ell)_T+\kappa b(\brho,\bv_\ell)-\kappa_h b(\brho_h,\bv_\ell)|\Big).
\end{align*}
Using the Cauchy--Schwarz inequality along with Lemma \ref{burbuja} and combining it with (\ref{burbuja11}), we obtain that
\begin{multline*}
	h_E\|\rho_EJ_{1,\ell}\|_{0,\ell}^2{\le} C ( \|\mu^{1/2}\varrho^{-1/2}\vdiv(\brho-\brho_h)\|_{0,\omega_T}\\
	+h_E\|(\rho_E)(\kappa_h\brho_h -\kappa\brho)\|_{0,\omega_T}) h_E^{1/2}\|\rho_EJ_{1,\ell}\|_{0,\ell}.
\end{multline*}
 The following results follow by an argument analogous to that {of \cite[Lemma 4.11]{MR3453481}}
 \begin{equation*}
 h_K^2\|\rho_R R_{2,T}\|_{0,T}^2\le C \|\brho-\brho_h\|_{0,T}^2
\quad\text{and}\quad 
 h_E\|\rho_EJ_{2,\ell} \|_{0,\ell}^2\le C \|\brho-\brho_h\|_{0,\omega_\ell}^2
 \end{equation*}
 for all $\ell\in \mathcal{E}_h(\Omega)$.
 \begin{theorem}\label{teo:efficiency}
Let $(\kappa,\brho)\in \mathbb{R}^+\times\mathbf{X}_0$ be the solution of \eqref{eq:spectral_continuous}, and let $(\kappa_h,\brho_h)\in \mathbb{R}^+ \times \mathbf{X}_{0,h}$ denote the corresponding discrete solution of \eqref{eq:spectral_discrete}. Then there exist positive constants $C$ and $h_0$, independent of $h$ and $\kappa$, such that, for all $h<h_0$, the following estimate holds
$$
\eta \le C (||\brho-\brho_h||_{\cblue{\bdiv},\Omega}+ \Theta).
$$

\end{theorem}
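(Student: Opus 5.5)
The plan is to sum the four local bounds already derived above over all elements and facets. First we fix the meaning of $\Theta$: it collects the higher-order eigen-terms $\|\rho_R(\kappa_h\brho_h-\kappa\brho)\|_{0,\Omega}$ and $\|\brho-\brho_h\|_{0,\Omega}$, together with the associated $h$-weighted versions produced by the bubble arguments. By Theorem \ref{thm:error1} and Lemma \ref{lmm:L2error}, these are of higher order.

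We treat the four contributions in turn.

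\textbf{Element residual $\mathbf{R}_{1,T}$.} We take the test function $\bv_T=\chi_T h_K^2\rho_R^2\mathbf{R}_{1,T}$. Since the exact eigenpair satisfies $\mu\nabla(\varrho^{-1}\bdiv\brho)+\kappa\{\brho-\frac{\lambda+\mu}{d\lambda+(d+1)\mu}\tr(\brho)\mathbf{I}\}=\boldsymbol{0}$ in $\Omega$, we may subtract it inside $(\mathbf{R}_{1,T},\bv_T)_T$. This produces the split $T_1+T_2$. For $T_1$ we integrate by parts; no boundary term appears because $\chi_T$ vanishes on $\partial T$. Lemma \ref{burbujainterior} then gives $\|\bdiv\bv_T\|_{0,T}\lesssim h_K^{-1}\|\bv_T\|_{0,T}$. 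For $T_2$ we use Cauchy--Schwarz. Dividing by $h_K\|\rho_R\mathbf{R}_{1,T}\|_{0,T}$, we get
\[
h_K^2\|\rho_R\mathbf{R}_{1,T}\|_{0,T}^2\lesssim \mu\varrho^{-1}\|\bdiv(\brho-\brho_h)\|_{0,T}^2+h_K^2\|\rho_R(\kappa_h\brho_h-\kappa\brho)\|_{0,T}^2 .
\]
The constant is free of $\lambda$ because the factor $\frac{\lambda+\mu}{d\lambda+(d+1)\mu}\le 1/d$ is uniformly bounded.

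\textbf{Normal jump $J_{1,\ell}$.} We use the facet bubble $\bv_\ell$, extended to the patch $\omega_\ell$ via Lemma \ref{burbuja}. Integrating by parts elementwise on $\omega_\ell$ and inserting the exact equation again brings in $\mathbf{R}_{1,T}$. That term is handled by the previous step, using $\|\bv_\ell\|_{0,T}\lesssim h_E^{1/2}\|\cdot\|_{0,\ell}$. For boundary facets, $\varrho^{-1}\bdiv\brho=\boldsymbol{0}$ on $\Gamma$ by \eqref{def:elast_system_rho_pure}, so the boundary jump is again a residual of the error.

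\textbf{Curl-type terms $R_{2,T}$ and $J_{2,\ell}$.} We follow \cite[Lemma 4.11]{MR3453481}. The key identity is that $\kappa\{\brho-\frac{\lambda+\mu}{d\lambda+(d+1)\mu}\tr(\brho)\mathbf{I}\}=-\mu\nabla(\varrho^{-1}\bdiv\brho)$ is a gradient. Hence its $\rot$ vanishes and its tangential jumps vanish. So $R_{2,T}=\rot(\kappa_h\brho_h^{\dd}-\kappa\brho^{\dd})$ (with $\dd$ denoting the $\lambda$-weighted deviatoric used in the estimator). The interior bubble argument combined with the inverse inequality $\|\rot(\chi_T q)\|_{0,T}\lesssim h_K^{-1}\|q\|_{0,T}$ then bounds $h_K\|\rho_R R_{2,T}\|_{0,T}$ by $\|\kappa_h\brho_h-\kappa\brho\|_{0,T}$. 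The same holds for $J_{2,\ell}$ on $\omega_\ell$ using Lemma \ref{burbuja}. We then split $\kappa_h\brho_h-\kappa\brho=\kappa_h(\brho_h-\brho)+(\kappa_h-\kappa)\brho$. The first piece gives $\|\brho-\brho_h\|_{0}$; the second is placed in $\Theta$, or absorbed using Theorem \ref{thm:double_order}.

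\textbf{Summation.} We sum over $T\in\CT_h$ and $\ell\in\CE_h$. Since each element belongs to a bounded number of patches $\omega_\ell$ (shape regularity), the global bound follows.

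\textbf{Main obstacle.} The delicate step is robustness of the weights in $\lambda$ and $\kappa_h$. The choice $\rho_E=\frac{1}{\sqrt2}\min\{(\mu\varrho^{-1})^{-1/2},(\kappa_h(\frac1d-\frac{\lambda+\mu}{d\lambda+(d+1)\mu}))^{-1/2}\}$ must allow the $\bdiv$-error term to be controlled with weight $\mu\varrho^{-1}$ and the $\mathbf{L}^2$ term with weight $\kappa_h$, simultaneously and uniformly. We would check this by bounding $\rho_E^2\mu\varrho^{-1}\le 1/2$ and $\rho_E^2\kappa_h\cdot\frac{\mu}{d(d\lambda+(d+1)\mu)}\le 1/2$ term by term. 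This also uses that $\kappa_h\to\kappa$ for $h<h_0$ (Theorem \ref{thm:error1}), so that $\kappa_h$ is bounded above and below.
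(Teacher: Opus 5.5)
Your proposal is correct and follows the paper's argument essentially step by step. It uses the same interior and facet bubble test functions for $\mathbf{R}_{1,T}$ and $J_{1,\ell}$ with the exact strong equation subtracted, handles the curl-type terms as in \cite[Lemma 4.11]{MR3453481} through the gradient structure of the exact $\lambda$-weighted deviatoric, and finishes by summation over the shape-regular mesh. The only difference is cosmetic: your splitting-off of $(\kappa_h-\kappa)\brho$ is unnecessary, since $\rot\big(\brho-\frac{\lambda+\mu}{d\lambda+(d+1)\mu}\tr(\brho)\mathbf{I}\big)=\boldsymbol{0}$ already gives $R_{2,T}=\kappa_h\rot\big((\brho_h-\brho)-\frac{\lambda+\mu}{d\lambda+(d+1)\mu}\tr(\brho_h-\brho)\mathbf{I}\big)$, and hence directly the paper's bound by $\|\brho-\brho_h\|_{0,T}$.
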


\section{Numerical results}
\label{sec:numerical-experiments}
  In this section, we present a collection of numerical experiments aimed at assessing the performance of the proposed formulation for the elasticity eigenvalue problem. The discrete schemes were implemented in the \texttt{FEniCSx} environment \cite{barrata2023dolfinx,scroggs2022basix}, and the resulting algebraic eigenvalue problems were solved by means of standard routines from \texttt{SLEPc} \cite{hernandez2005slepc}.
  
  The discrete eigenvalue problem is assembled according to the mixed pseudostress-based formulation introduced in the previous sections. The reported convergence rates for the eigenvalues are obtained by applying a least-squares fit to the corresponding numerical errors over a sequence of successively refined meshes. In order to examine the behavior of the method under different regularity regimes, we consider both two- and three-dimensional test configurations. For several representative examples, we report the smallest computed eigenvalues together with the corresponding experimental orders of convergence.
  
  \cblue{For conciseness of presentation, the numerical tests reported below are carried out with the lowest-order Raviart--Thomas approximation $\boldsymbol{\mathcal{RT}}_0$. This choice keeps the number of numerical tables and figures limited while retaining the main features to be assessed. Higher-order variants are covered by the theoretical analysis and follow the same discrete framework.}

  Throughout this section, \(N\) denotes the characteristic mesh resolution, with \(h \simeq N^{-1}\), whereas \(\mathrm{dof}\) stands for the total number of degrees of freedom associated with the corresponding finite element discretization.
  
  For each eigenvalue \(\kappa_i\), we define the approximation error by
  $
  \err(\kappa_i):=\big|\kappa_{h,i}-\kappa_i\big|,
  $
  where \(\kappa_i\) denotes the corresponding extrapolated value. We recall that $\sqrt{\kappa_i}$ (resp. $\sqrt{\kappa_{h,i}}$) corresponds to the i-th eigenfrequency (resp. discrete eigenfrequency). In turn, the effectivity index associated with the estimator \(\eta\) and the discrete eigenvalue \(\kappa_{h,i}\) is defined as
  \[
  \eff(\kappa_i):=\frac{\err(\kappa_i)}{\eta^2}.
  \]
  
  To perform the adaptive finite element experiments, we generate a sequence of nested conforming triangulations according to the standard loop
  \begin{center}
  	\emph{solve} \(\rightarrow\) \emph{estimate} \(\rightarrow\) \emph{mark} \(\rightarrow\) \emph{refine},
  \end{center}
  following \cite{verfuhrt1996}. 
%
%
%
%
  We remark that to compute the velocity fields $\bu_{h,i}$ or the stress tensors $\bsig_{h,i}$ we resort to \eqref{def:elast_system} and \cite[Section 2]{MR3860570} in order to have the postprocessing formulas
  $$
  	\bu_{h,i}=-\frac{\bdiv \brho_{h,i}}{\kappa_{h,i}}, \quad \bsig_{h,i}:=\brho_{h,i} + \brho_{h,i}^\texttt{t} -\left(\frac{\lambda+2\mu}{d\lambda+(d+1)\mu}\right)\tr(\brho_{h,i})\mathbf{I}.
  $$ 
  In the limit case, we have $\ds \bsig_{h,i}:=\brho_{h,i} +\brho_{h,i}^\texttt{t} -\frac{1}{d}\tr(\brho_{h,i})\mathbf{I}$.
  
 \subsection{Convergence on a square domain} \label{sec:2D-square}

In this test we study the convergence of the proposed scheme on a two-dimensional convex geometry. We consider the square domain $\Omega:=(0,1)^2$. The first four computed eigenvalues are studied for different values of the Poisson ratio $\nu$.

The results reported in Table~\ref{table-square2D-RT0} show that the computed eigenvalues converge with rates close to $\mathcal{O}(h^2)$ for all tested values of $\nu$. These rates are consistent with the estimate $\mathcal{O}(h^{2\min\{r,k\}})$ derived in Theorem~\ref{thm:double_order}. The behavior is maintained as $\nu \to 0.5$, and no degradation in the convergence order is observed. The extrapolated values are in agreement with the reference values, which confirms the accuracy of the method in the nearly incompressible regime.


\cblue{The same convergence behavior is observed in three dimensions. Uniform refinement experiments on the unit cube with the lowest-order Raviart--Thomas discretization recover the expected second-order convergence for the eigenvalues for all tested values of the Poisson ratio, with no deterioration as $\nu\to0.5$. These results are omitted for brevity.}

\begin{table}[hbt!]
 	\centering 
 	{\footnotesize
 		\begin{center}
 			\caption{Test \ref{sec:2D-square}. Convergence behavior of the first four computed eigenvalues when using different values of $\nu$.}
 			\begin{tabular}{|c|c c c c |c| c|c|}
 				\toprule
 				\hline
 				$\nu$&$N=40$           &  $N=50$         &  $N=60$         & $N=70$ & Order & $\sqrt{\widehat{\kappa}_{\text{extr}}}$ & Ref. \cite{MR3962898}  \\ 
 				\hline
 				\hline
 				\multirow{5}{0.05\linewidth}{0.35}
 				&   4.190383  &   4.191342  &   4.191870  &   4.192192  & 1.96 &   4.193108 & 4.19311  \\
 				&   4.190383  &   4.191342  &   4.191870  &   4.192192  & 1.96 &   4.193108 & 4.19311  \\
 				&   4.371890  &   4.371991  &   4.372046  &   4.372079  & 1.97 &   4.372174 &4.37217 \\
 				&   5.928247  &   5.929946  &   5.930891  &   5.931470  & 1.91 &   5.933173 &5.93318  \\
 				\hline
 				\multirow{5}{0.05\linewidth}{0.49}
 				&   4.188000  &   4.188205  &   4.188318  &   4.188386  & 1.97 &   4.188580 &    4.18858  \\
 				&   5.513792  &   5.515156  &   5.515897  &   5.516344  & 2.02 &   5.517574 &    5.51758  \\
 				&   5.513792  &   5.515156  &   5.515897  &   5.516344  & 2.02 &   5.517574 &    5.51758  \\
 				&   6.539852  &   6.541112  &   6.541798  &   6.542212  & 2.01 &   6.543359 &    6.54337  \\
 				\hline
 				\multirow{5}{0.05\linewidth}{0.50}
 				&   4.176499  &   4.176715  &   4.176834  &   4.176906  & 1.97 &   4.177111 &    4.17711  \\
 				&   5.538282  &   5.539438  &   5.540065  &   5.540444  & 2.02 &   5.541486 &    5.54149  \\
 				&   5.538282  &   5.539438  &   5.540065  &   5.540444  & 2.02 &   5.541486 &    5.54149  \\
 				&   6.533942  &   6.535153  &   6.535813  &   6.536211  & 2.01 &   6.537313 &    6.53732  \\
 				\hline
 			\end{tabular}
 	\end{center}}
 	\smallskip
 	\label{table-square2D-RT0}
 \end{table}

\subsection{A posteriori results on a 2D non-convex geometry}
\label{subsec:2D-circle}

In this test we assess the adaptive strategy on a two-dimensional non-convex geometry. Let
\[
\Omega_R:=\left\{(x,y)\in\mathbb{R}^2:\ x^2+y^2<1\right\}
\setminus
\left\{(x,y)\in\mathbb{R}^2:\ x>0,\ y<0\right\}.
\]
Thus, $\Omega_R$ is a circular sector of aperture $3\pi/2$ with a re-entrant corner at the origin. The adaptive tests are performed for the first eigenpair and for different values of the Poisson ratio $\nu$. The purpose of this experiment is to verify whether the residual estimator detects the singular region and whether the adaptive method recovers the expected rate in terms of degrees of freedom. For this type of geometry, there is no closed-form eigenvalues. Hence, the errors are computed with respect to extrapolated reference values: $\kappa_{1}=6.79322 (\nu=0.35)$,  $\kappa_{1}=12.58428 (\nu=0.49)$,  $\kappa_{1}=12.65791 ( \nu=0.50)$.

The error curves presented in Figure~\ref{fig:adapt-error-circle} shows that err$(\kappa_1) \simeq C\,\mathrm{dof}^{-1}$, which corresponds to the optimal rate for adaptive methods applied to eigenvalue problems with singular eigenfunctions. This rate is observed for all values of $\nu$. The slope of the curves matches the reference line $\mathcal{O}(\mathrm{dof}^{-1})$ over several refinement levels, indicating that the adaptive procedure compensates for the reduced regularity induced by the re-entrant corner. No reduction in the convergence rate is detected as $\nu \to 0.5$.

The efficiency indices shown in Figure~\ref{fig:adapt-error-circle} remain uniformly bounded with respect to the number of degrees of freedom. The behavior is consistent for all values of $\nu$, and no significant degradation of the estimator is detected in the nearly incompressible regime. The efficiency curves remain almost constant, indicating a stable estimator. This behavior is in agreement with the reliability result established in Theorem~\ref{teo:efficiency}, which ensures that the estimator provides an upper bound for the eigenvalue error up to a multiplicative constant.

Figure~\ref{fig:adapt-mesh-circle-nu} shows that the adaptive meshes depend on the value of the Poisson ratio. While for $\nu=0.35$ the refinement is mainly concentrated near the re-entrant corner, for $\nu=0.50$ a stronger and more localized refinement is observed in its vicinity. This behavior reflects the change in the local error distribution in the nearly incompressible regime. Consequently, the error estimator drives a more concentrated mesh around the critical region. In both cases, the refinement identifies the singular zone, which confirms the robustness of the adaptive strategy with respect to variations in $\nu$.

We end the experiment by presenting the studied lowest mode. The eigenfunctions displayed in Figure~\ref{fig:circle-eigenfunctions-rho1} show a localization of the gradients near the re-entrant corner, which is consistent with the singular behavior of elasticity solutions in non-smooth domains. The pseudostress magnitude exhibits a peak at the corner, while the displacement field presents smoother but still concentrated variations in the same region. As $\nu$ increases, the amplitude and distribution of the displacement field change, but the singularity-driven pattern remains governed by the geometry.

\begin{figure}[!hbt]\centering
	\begin{minipage}{0.4\linewidth}\centering
		\includegraphics[width=\linewidth]{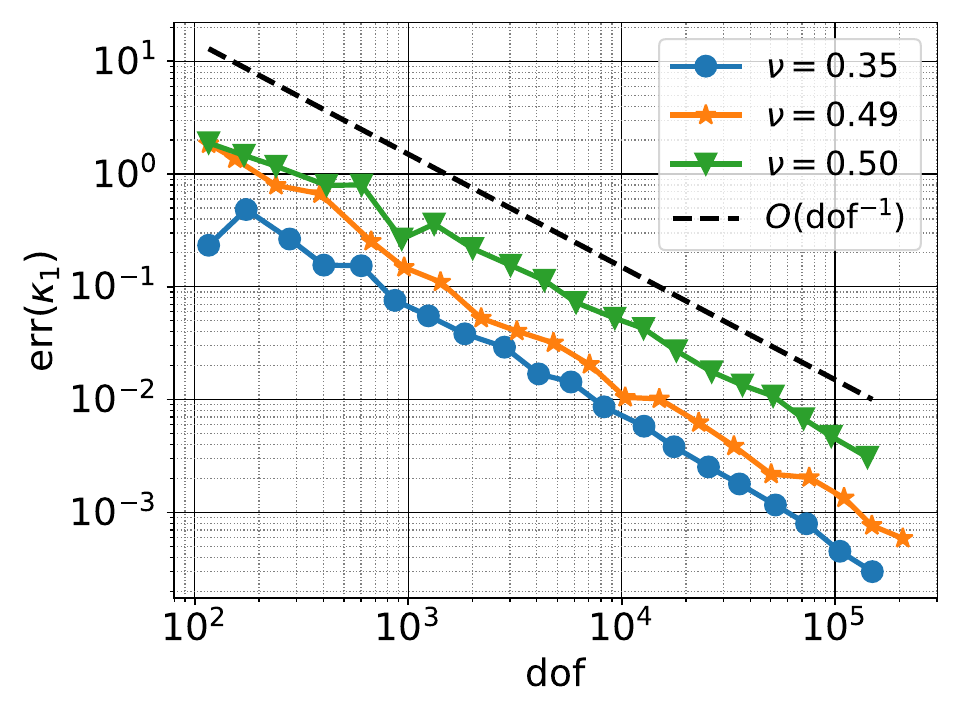}
	\end{minipage}
	\begin{minipage}{0.4\linewidth}\centering
		\includegraphics[width=\linewidth]{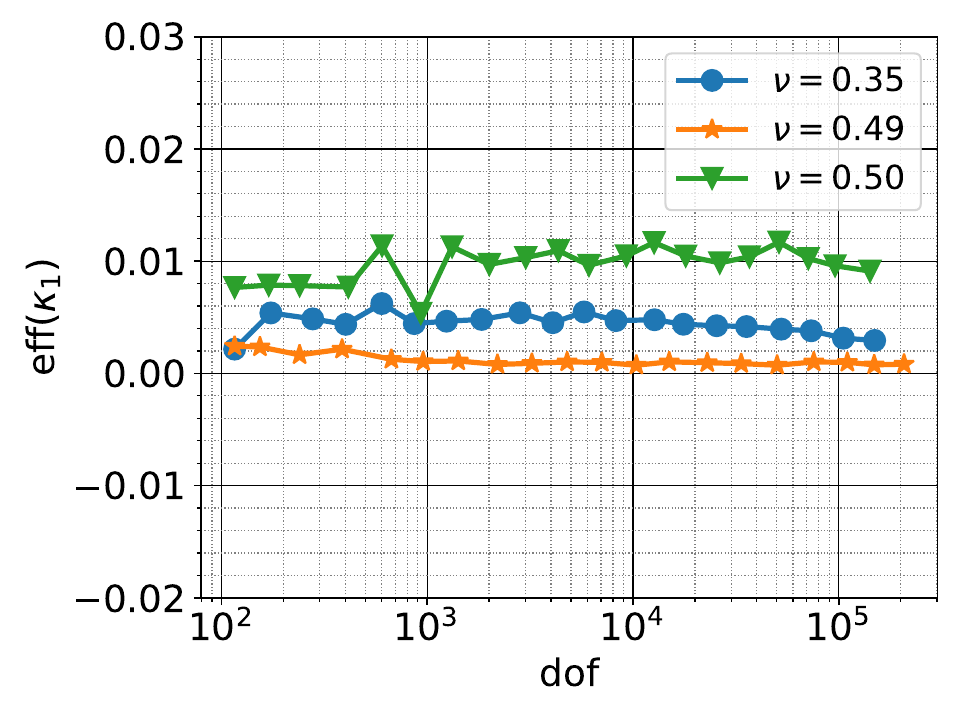}
	\end{minipage}
	\caption{Test~\ref{subsec:2D-circle}. Error and efficiency curves for the first computed eigenvalue under adaptive refinements.}
	\label{fig:adapt-error-circle}
\end{figure}

{\small\begin{figure}[!hbt]\centering
\begin{minipage}{0.32\linewidth}\centering
		{Initial mesh}\\
		\includegraphics[scale=0.07,trim=20cm 0cm 20cm 0cm, clip]{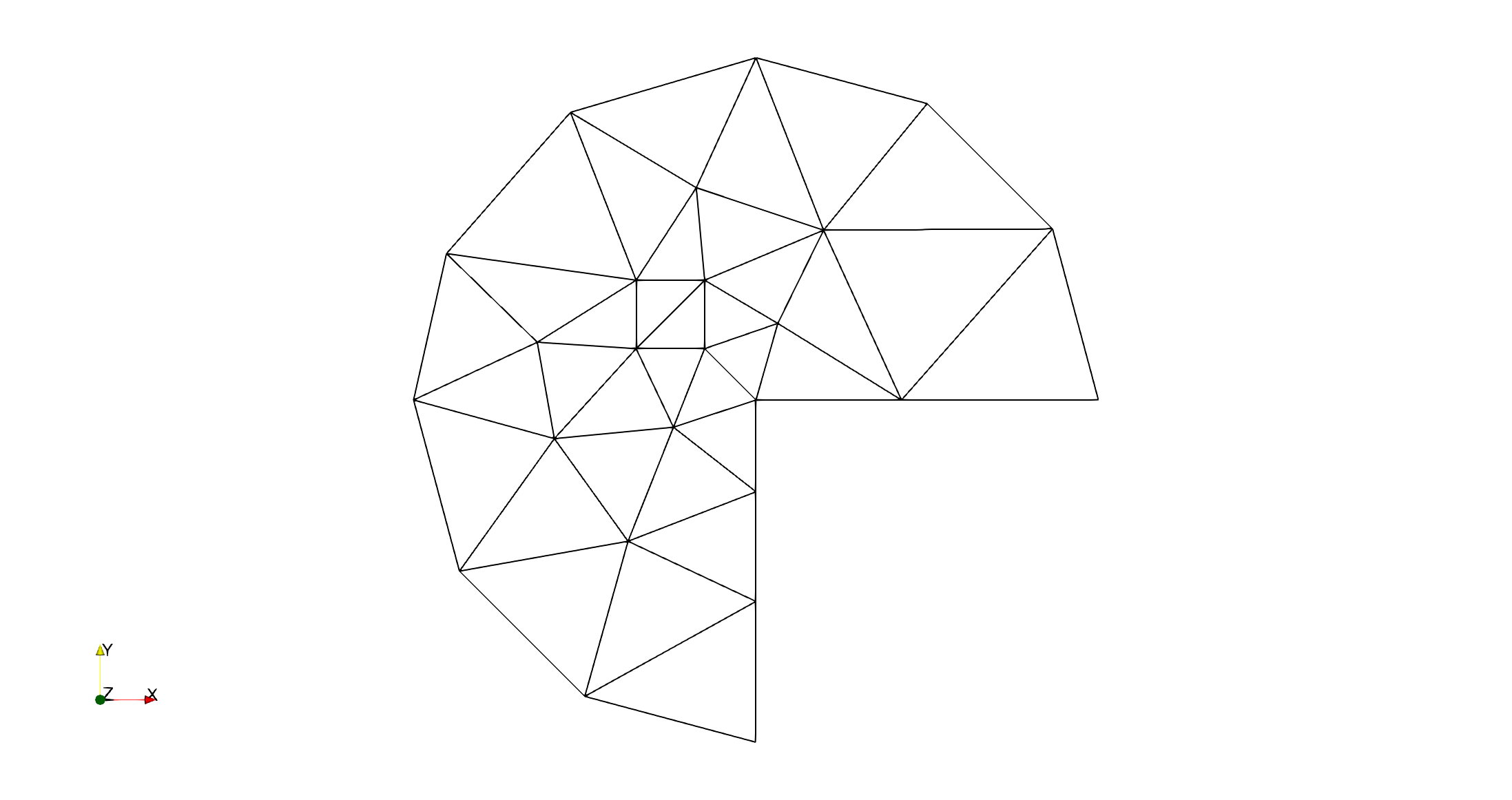}\\
		{$\mathrm{dof}$=116}
	\end{minipage}
	\begin{minipage}{0.32\linewidth}\centering
		{$\nu=0.35$}\\
		\includegraphics[scale=0.07,trim=20cm 0cm 20cm 0cm, clip]{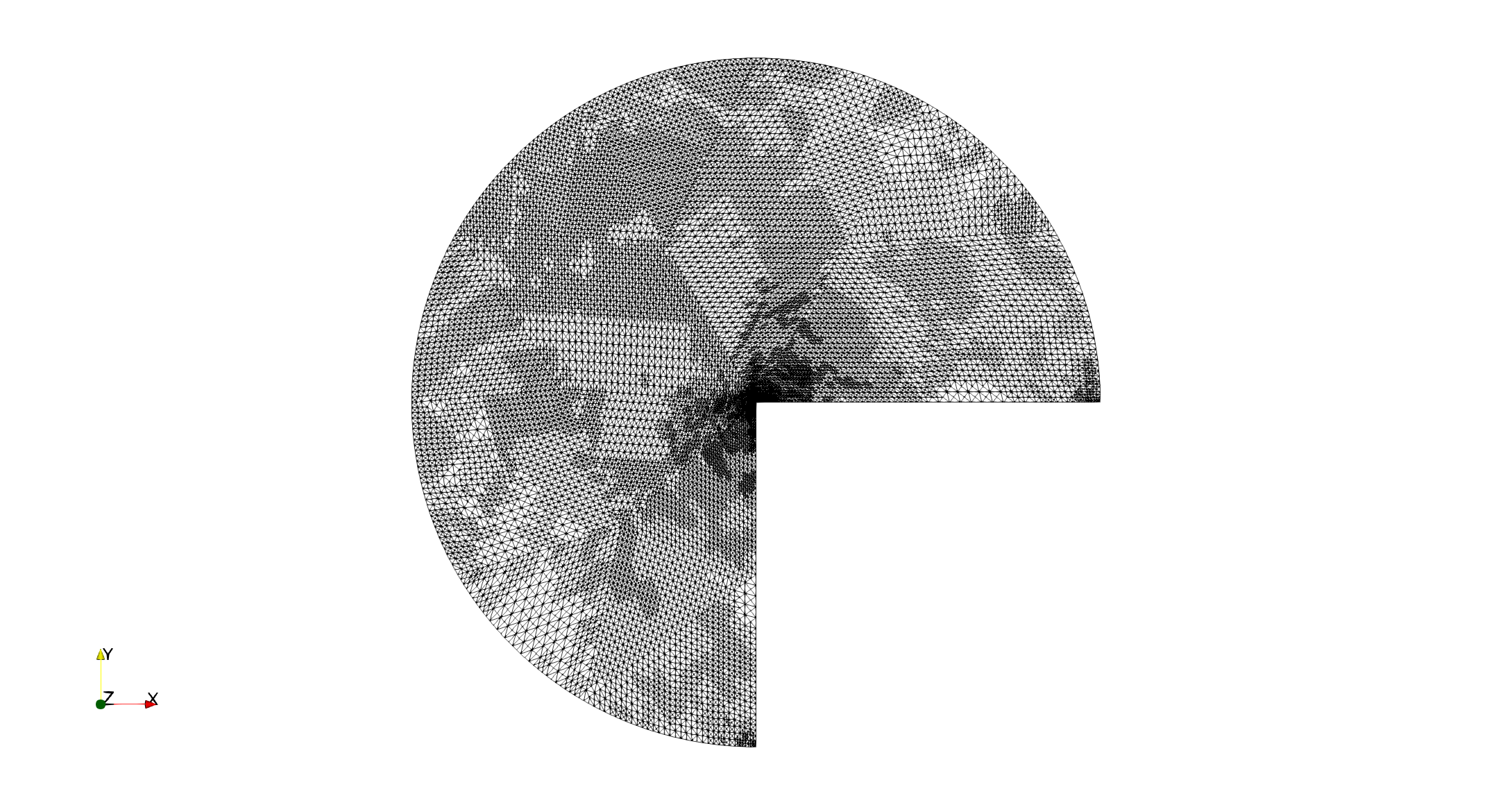}\\
		{$\mathrm{dof}$=149472}
	\end{minipage}
	\begin{minipage}{0.32\linewidth}\centering
		{$\nu=0.50$}\\
		\includegraphics[scale=0.07,trim=20cm 0cm 20cm 0cm, clip]{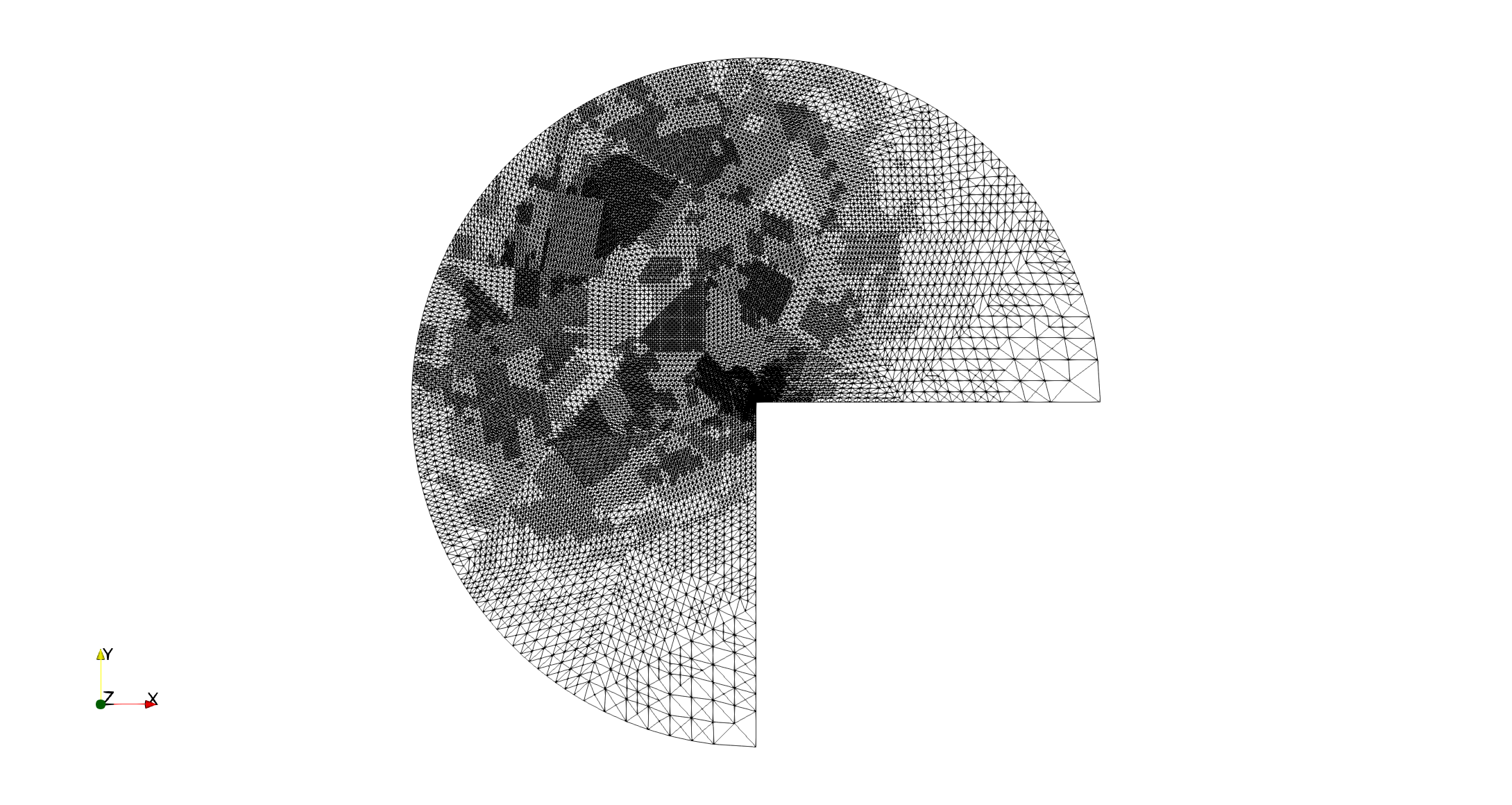}\\
		{$\mathrm{dof}$=141898}
	\end{minipage}
	\caption{Test~\ref{subsec:2D-circle}. Initial and final meshes for the first eigenpair at the last adaptive iteration.}
	\label{fig:adapt-mesh-circle-nu}
\end{figure}}

{\begin{figure}[!hbt]\centering
\begin{minipage}{0.32\linewidth}\centering
		{$|\brho_{h,1}|, \nu=0.35$}\\
		\includegraphics[scale=0.07,trim=20cm 0cm 20cm 0cm, clip]{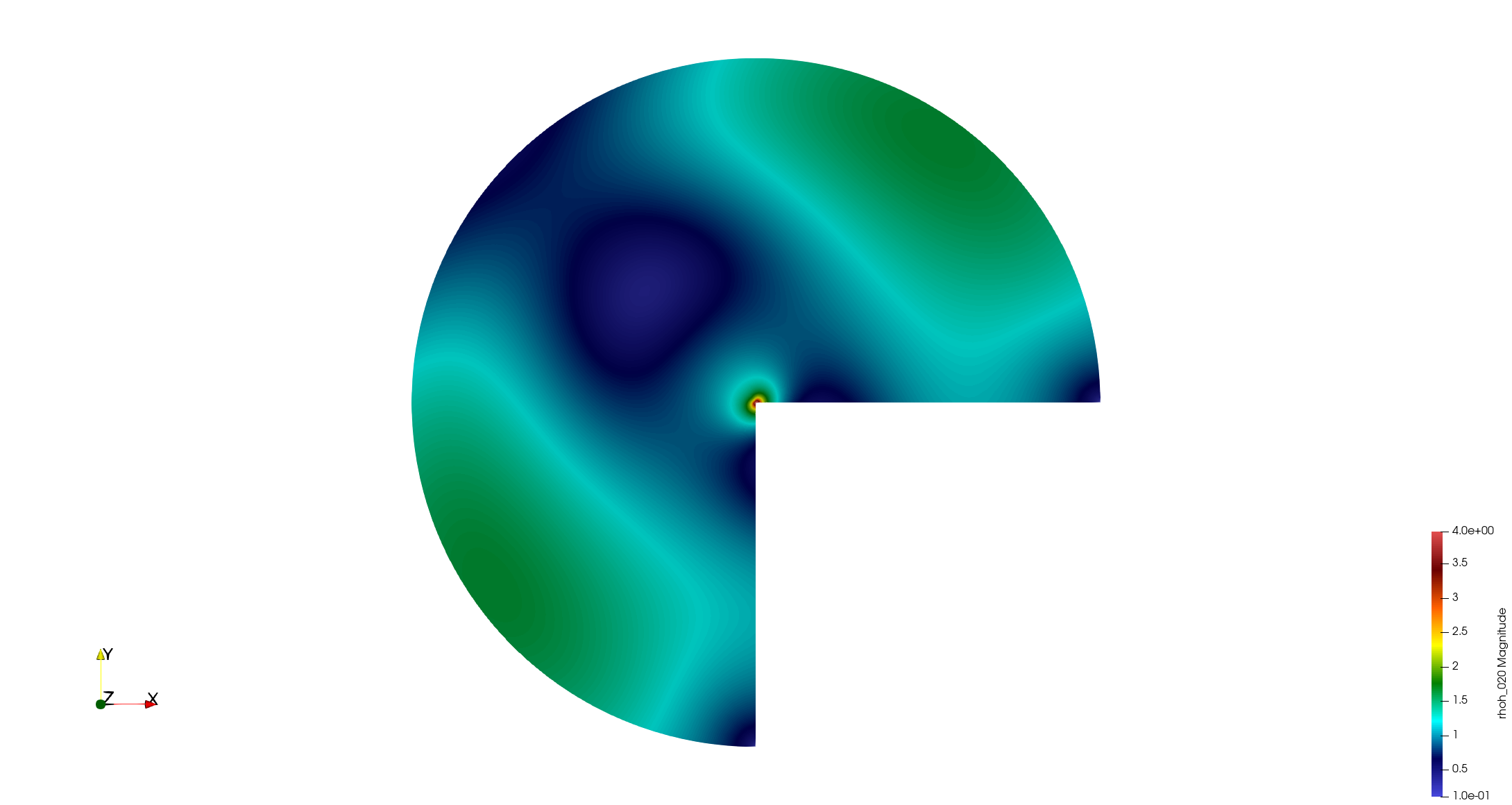}\\
	\end{minipage}
	\begin{minipage}{0.32\linewidth}\centering
		{$|\bsig_{h,1}|, \nu=0.35$}\\
		\includegraphics[scale=0.07,trim=20cm 0cm 20cm 0cm, clip]{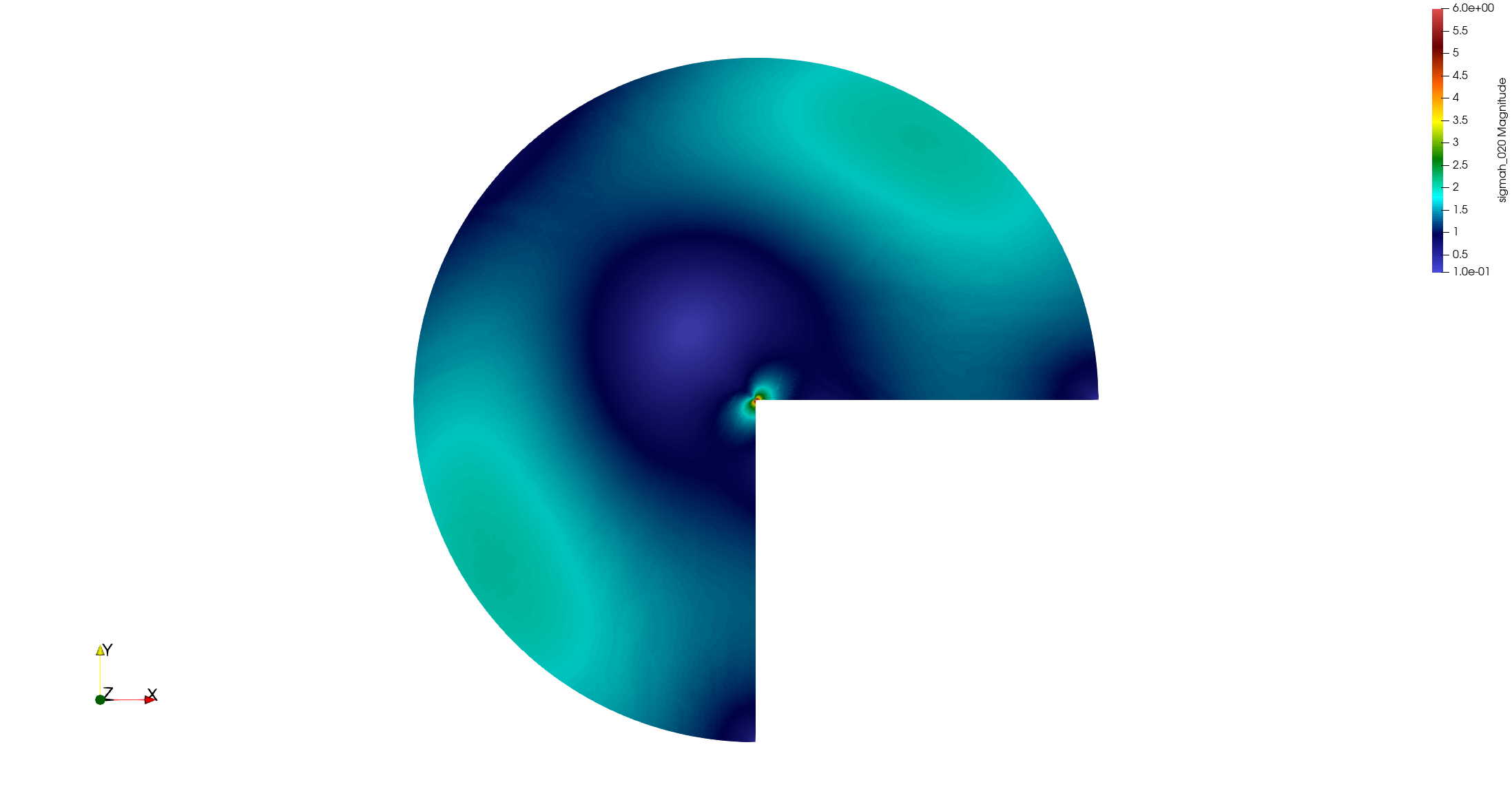}\\
	\end{minipage}
	\begin{minipage}{0.32\linewidth}\centering
		{$|\bu_{h,1}|, \nu=0.35$}\\
		\includegraphics[scale=0.07,trim=20cm 0cm 20cm 0cm, clip]{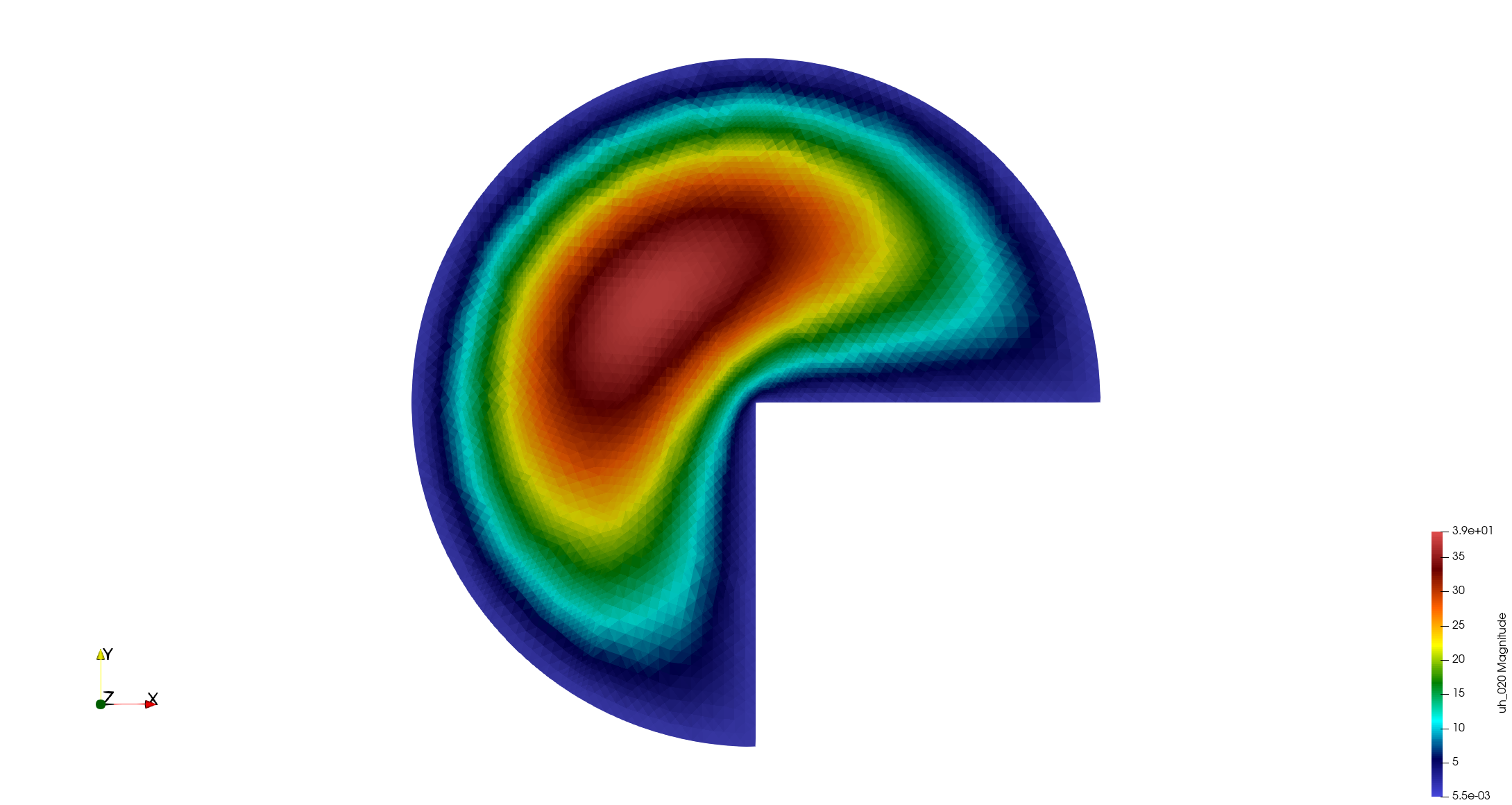}\\
	\end{minipage}\\
	\begin{minipage}{0.32\linewidth}\centering
		{$|\brho_{h,1}|, \nu=0.50$}\\
		\includegraphics[scale=0.07,trim=20cm 0cm 20cm 0cm, clip]{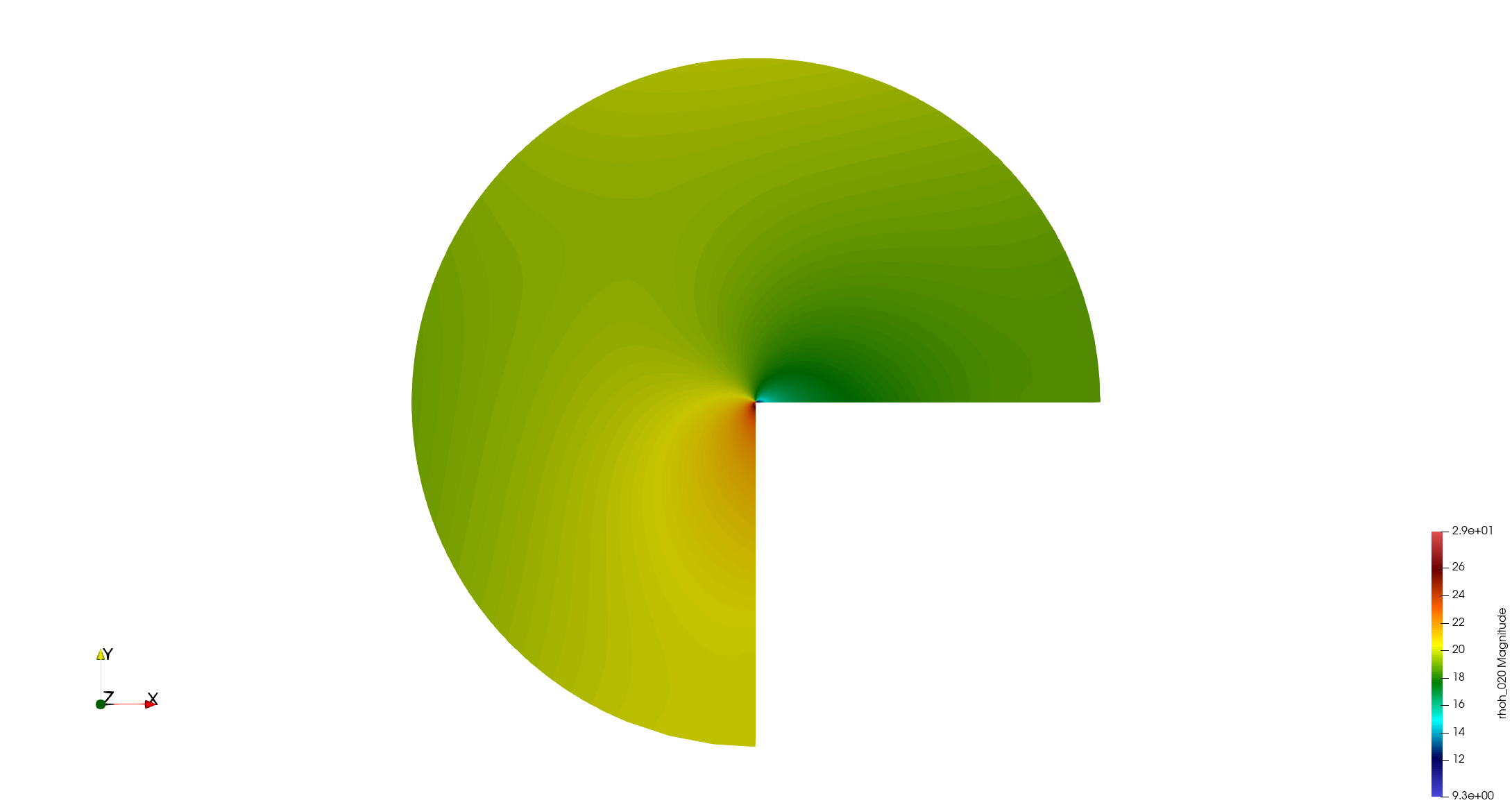}\\
	\end{minipage}
	\begin{minipage}{0.32\linewidth}\centering
		{$|\bsig_{h,1}|, \nu=0.50$}\\
		\includegraphics[scale=0.07,trim=20cm 0cm 20cm 0cm, clip]{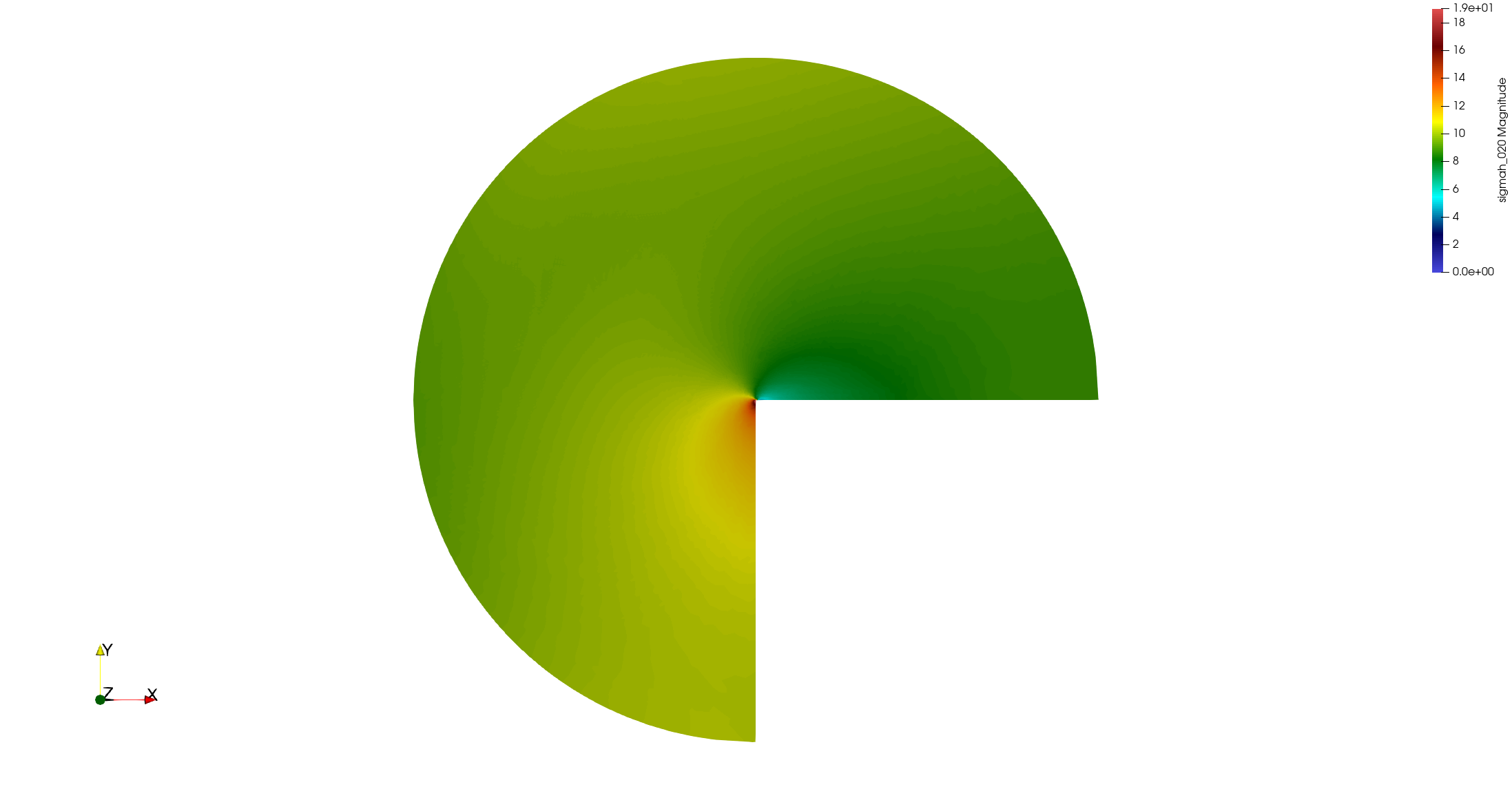}\\
	\end{minipage}
	\begin{minipage}{0.32\linewidth}\centering
		{$|\bu_{h,1}|, \nu=0.50$}\\
		\includegraphics[scale=0.07,trim=20cm 0cm 20cm 0cm, clip]{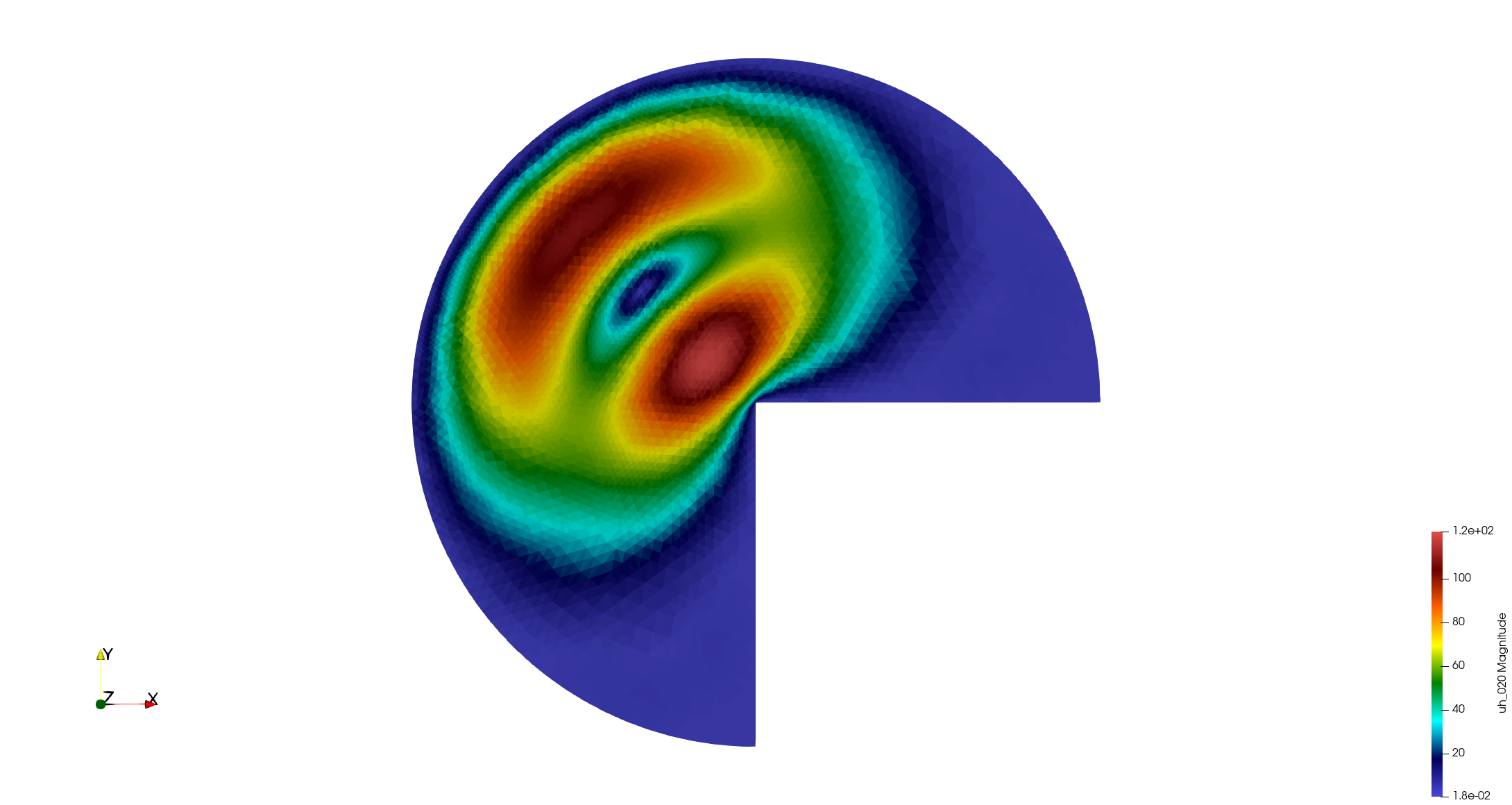}\\
	\end{minipage}
	\caption{Test~\ref{subsec:2D-circle}. Magnitudes of the pseudostress and the corresponding post-processed stress and velocity fields for the fourth eigenfrequency $\sqrt{\kappa_{h,1}}$ with different values of $\nu$.}
	\label{fig:circle-eigenfunctions-rho1}
\end{figure}}

\subsection{Adaptive refinement on a three-dimensional reentrant cube}
\label{sec:adaptive-cube-notch}

As a final experiment, we consider the adaptive approximation of the elasticity eigenvalue problem on the three-dimensional non-convex domain given by
$
\Omega:=(0,1)\times(0,1)\times(0,0.77)\setminus \overline{\mathcal{N}},
$
where the notch $\mathcal{N}$ is the triangular prism
$$
\mathcal{N}:=
\left\{(x,y,z)\in\mathbb{R}^3:\ (x,y)\in T,\ 0.10<z<0.65\right\},
$$
and $T$ is the triangular region with vertices $(1,0.30)$,$(1,0.75)$, and $(0.45,0.48)$. Thus, the notch enters from the face $x=1$ and generates reentrant edges inside the cube (see Figure \ref{fig:cube-notch-adapt-mesh}). The singular geometry produced by the internal notch gives rise to reduced regularity of the eigenfunctions, making this benchmark particularly suitable for assessing the robustness of the proposed adaptive strategy. The computations are performed using the lowest-order Raviart--Thomas discretization ($k=0$), and the adaptive algorithm is driven by the residual estimator introduced in Section~\ref{sec:apost}. The first eigenpair is approximated for $\nu=0.35$, $\nu=0.49$, and $\nu=0.50$. The experiment is performed with the lowest-order Raviart--Thomas discretization. Since no closed-form eigenvalues are available for this geometry, the errors are computed with respect to extrapolated reference values: $\kappa_{1}=28.48844 (\nu=0.35)$,  $\kappa_{1}=36.02493 (\nu=0.49)$,  $\kappa_{1}=35.95801 ( \nu=0.50)$.

\begin{figure}[!hbt]\centering
	\begin{minipage}{0.40\linewidth}\centering
		\includegraphics[width=\linewidth]{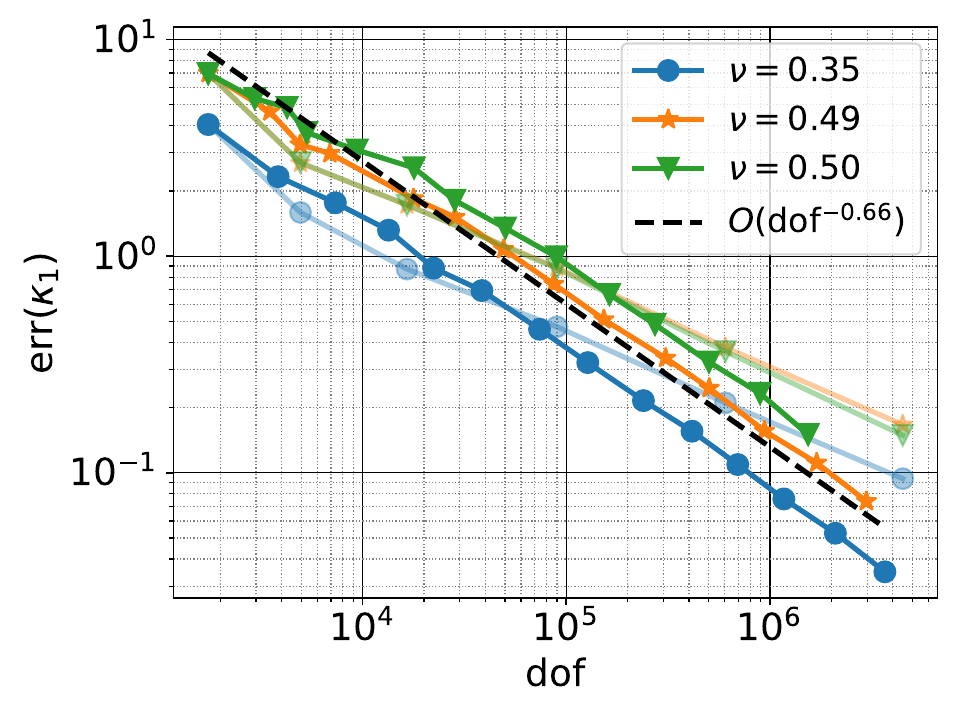}
	\end{minipage}
	\begin{minipage}{0.40\linewidth}\centering
		\includegraphics[width=\linewidth]{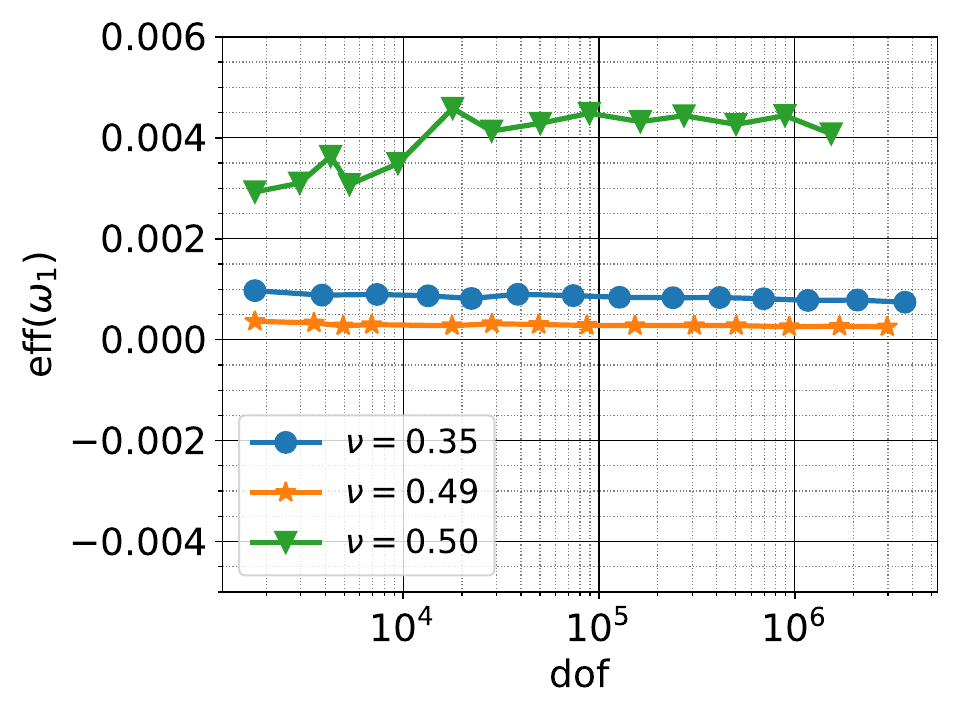}
	\end{minipage}
	\caption{Test~\ref{sec:adaptive-cube-notch}. Error curves and efficiency indices for the first computed eigenfrequency using lowest-order $\boldsymbol{\mathcal{RT}}$ elements.}
	\label{fig:adapt-error-eff-cube-notch}
\end{figure}

The error curves in Figure~\ref{fig:adapt-error-eff-cube-notch} show an asymptotic behavior of approximately $\mathcal{O}(\mathrm{dof}^{-0.67})$, $\mathcal{O}(\mathrm{dof}^{-0.66})$, and $\mathcal{O}(\mathrm{dof}^{-0.66})$ for $\nu=0.35$, $\nu=0.49$, and $\nu=0.50$, respectively. Hence, the adaptive algorithm recovers the expected rate $\mathcal{O}(\mathrm{dof}^{-2/3})$. The experimental rate is preserved in the nearly incompressible regime, which indicates that the adaptive method remains locking-free. The efficiency indices remain bounded throughout refinement, giving $\texttt{eff}(\kappa_1)\in[7.4180\cdot10^{-4},9.7062\cdot10^{-4}]$ for $\nu=0.35$, $\texttt{eff}(\kappa_1)\in[2.5373\cdot10^{-4},3.6350\cdot10^{-4}]$ for $\nu=0.49$, and $\texttt{eff}(\kappa_1)\in[2.9256\cdot10^{-3},4.5813\cdot10^{-3}]$ for $\nu=0.50$. The larger values for $\nu=0.50$ indicate a change in the estimator scaling in the incompressible limit, but the curves remain bounded and do not deteriorate with the number of degrees of freedom.

{\small\begin{figure}[!hbt]\centering
\begin{minipage}{0.32\linewidth}\centering
		{Initial mesh}\\
		\includegraphics[scale=0.08,trim=16cm 0cm 19cm 5cm,clip]{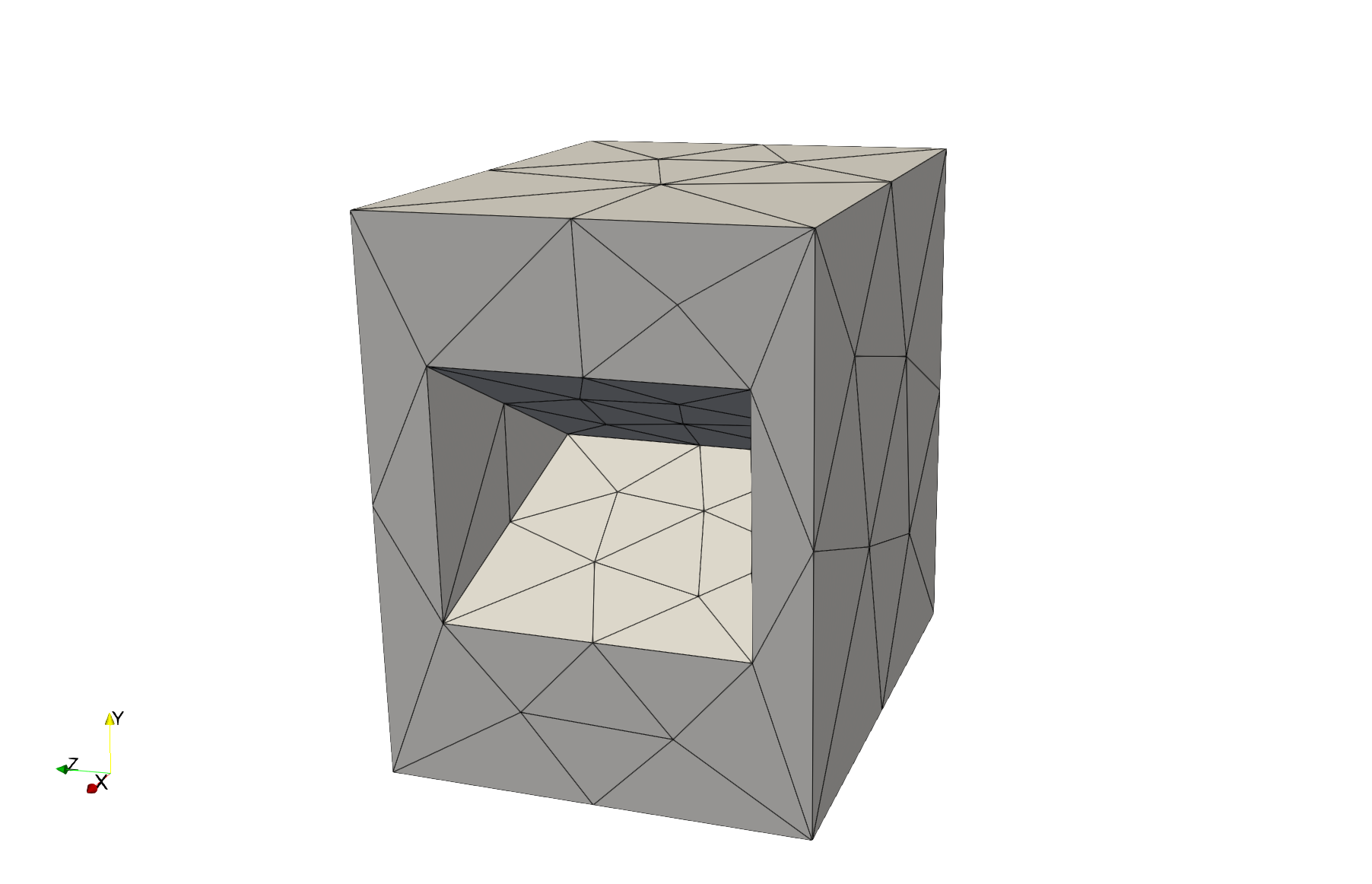}\\
		{$\mathrm{dof}$=1749}
	\end{minipage}
	\begin{minipage}{0.32\linewidth}\centering
		{$\nu=0.35$}\\
		\includegraphics[scale=0.08,trim=16cm 0cm 19cm 5cm,clip]{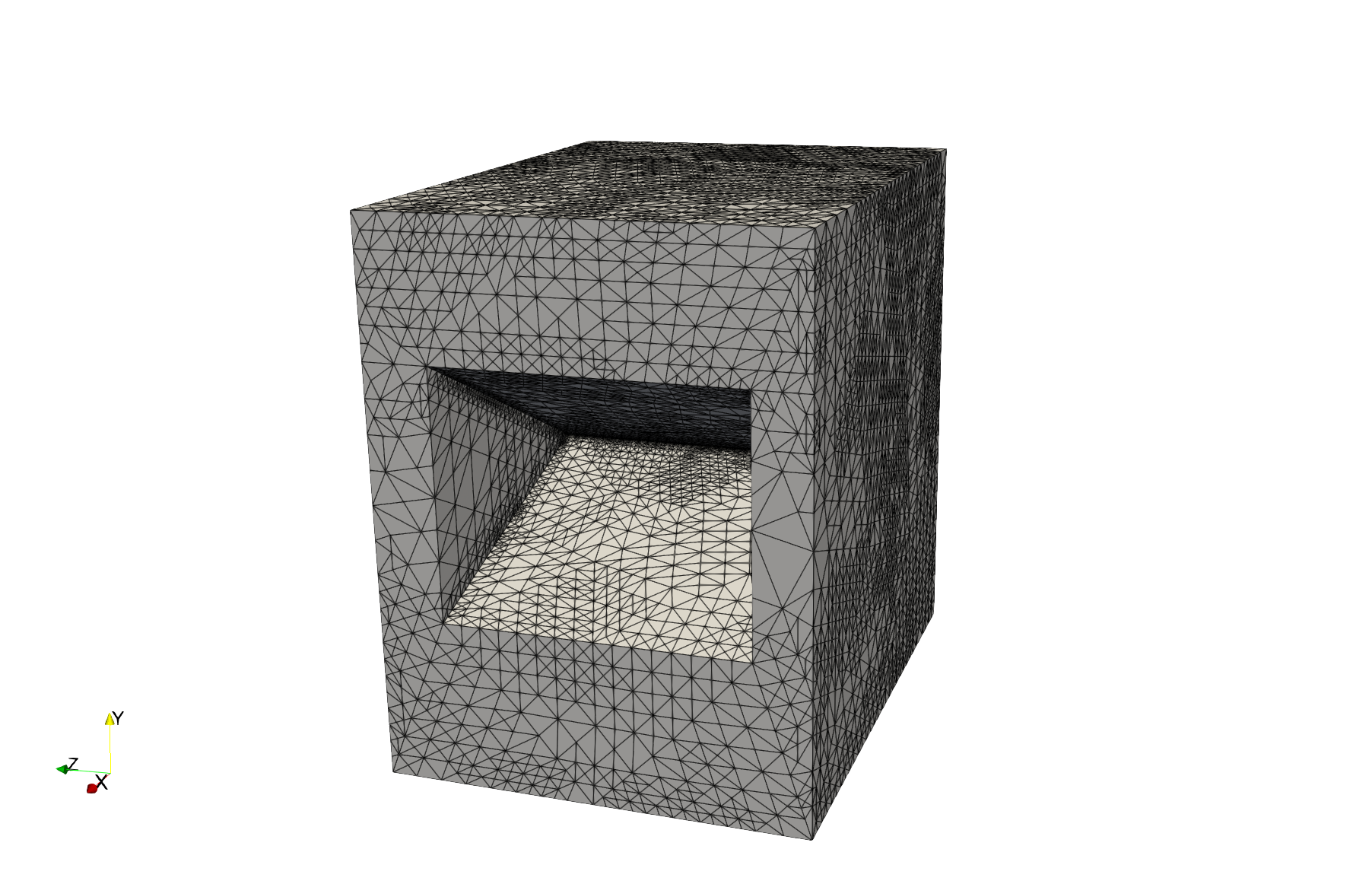}\\
		{$\mathrm{dof}$=3661050}
	\end{minipage}
	\begin{minipage}{0.32\linewidth}\centering
		{$\nu=0.50$}\\
		\includegraphics[scale=0.08,trim=16cm 0cm 19cm 5cm,clip]{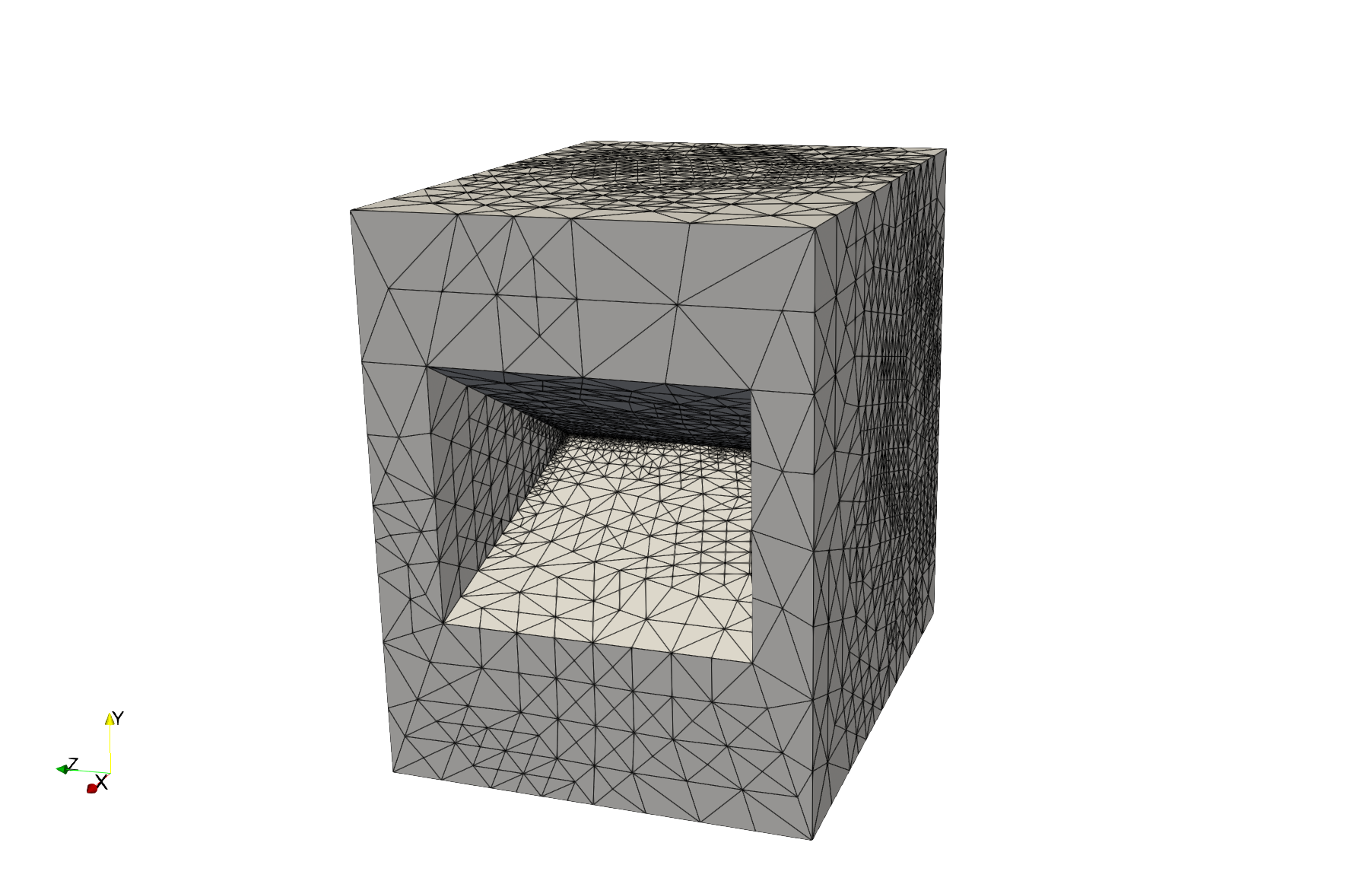}\\
		{$\mathrm{dof}$=1537422}
	\end{minipage}
	\caption{Test~\ref{sec:adaptive-cube-notch}. Initial and final adaptive meshes for the notched cube in different compressibility scenarios.}
	\label{fig:cube-notch-adapt-mesh}
\end{figure}}
The final adaptive meshes in Figure~\ref{fig:cube-notch-adapt-mesh} show a concentration of polyhedras near the reentrant edge generated by the notch. Away from this region, the mesh remains coarser, showing that the estimator identifies the local contributions to the error.

Finally, in Figure~\ref{fig:cube-notch-eigenfunctions}, we present the plots of the first eigenfunctions corresponding to $\nu=0.35$ and $\nu=0.50$. Note that the pseudostress magnitude $|\boldsymbol{\rho}_h|$ develops its largest values along the reentrant edge of the notch, where the geometric singularity induces strong stress concentrations. Similarly, we observe that the post-processed stress $|\boldsymbol{\sigma}_h|$ exhibits the same qualitative behavior. This singular mechanical behavior was properly detected by the proposed estimator. On the other hand, the displacement field $|\mathbf{u}_h|$ remains smooth over most of the domain while presenting localized high gradients near the singular edge. 

{\small\begin{figure}[!hbt]\centering
	\begin{minipage}{0.32\linewidth}\centering
		{ $|\brho_{h,1}|$, $\nu=0.35$}\\
		\includegraphics[scale=0.07,trim=16cm 0cm 19cm 5cm,clip]{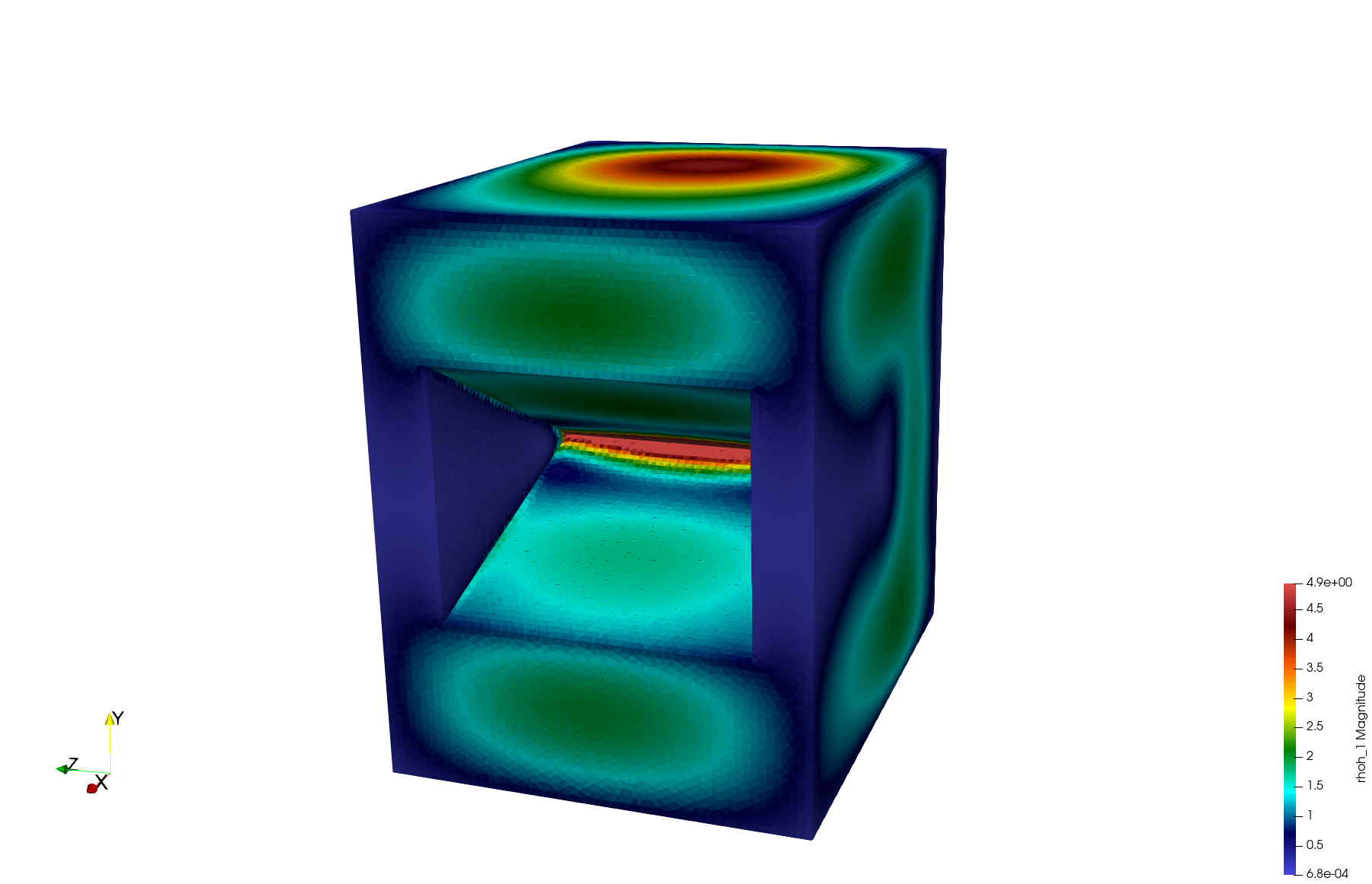}
	\end{minipage}
	\begin{minipage}{0.32\linewidth}\centering
		{ $|\bsig_{h,1}|$, $\nu=0.35$}\\
		\includegraphics[scale=0.07,trim=16cm 0cm 19cm 5cm,clip]{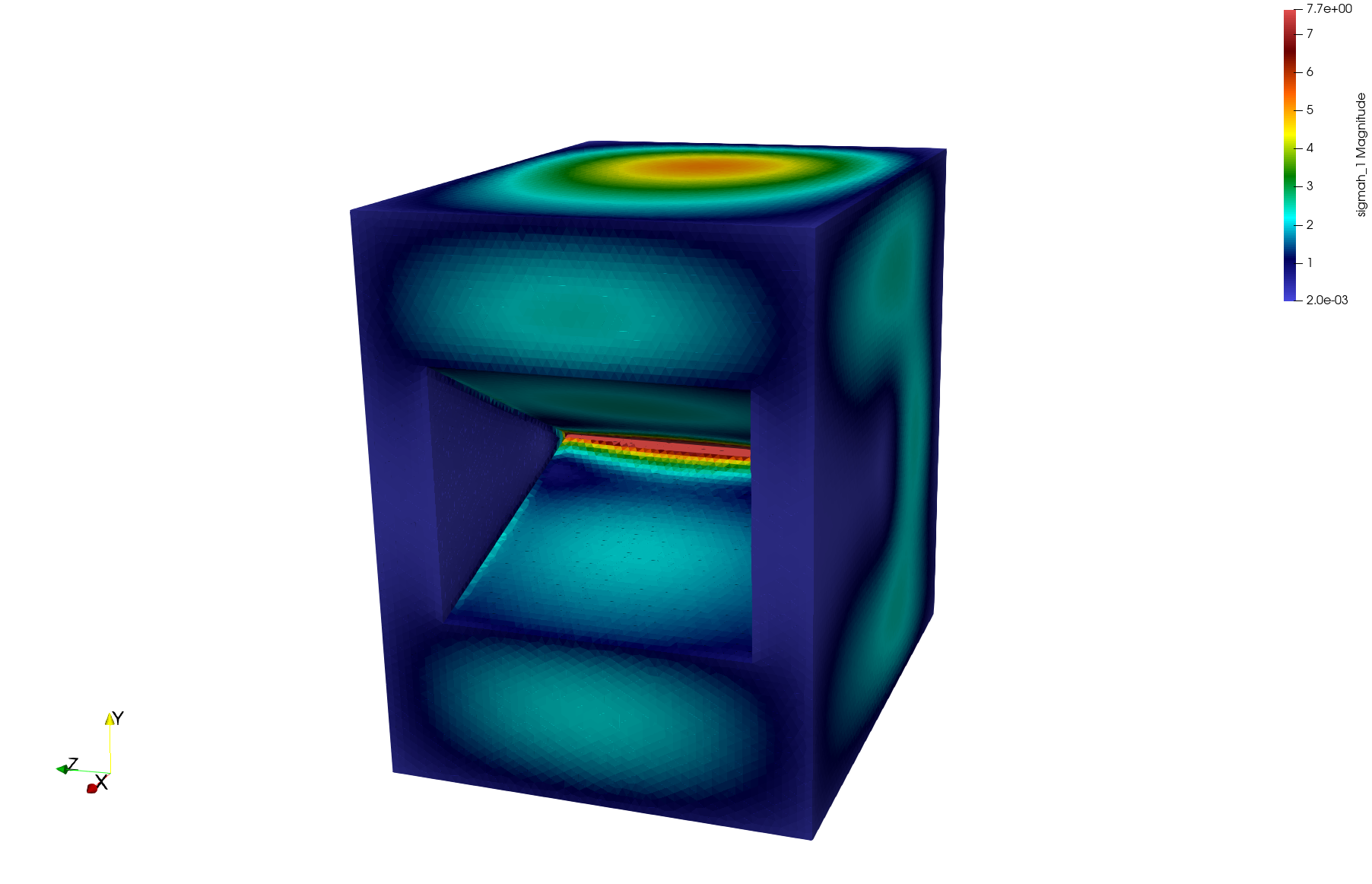}
	\end{minipage}
	\begin{minipage}{0.32\linewidth}\centering
		{ $|\bu_{h,1}|$, $\nu=0.35$}\\
		\includegraphics[scale=0.07,trim=16cm 0cm 19cm 5cm,clip]{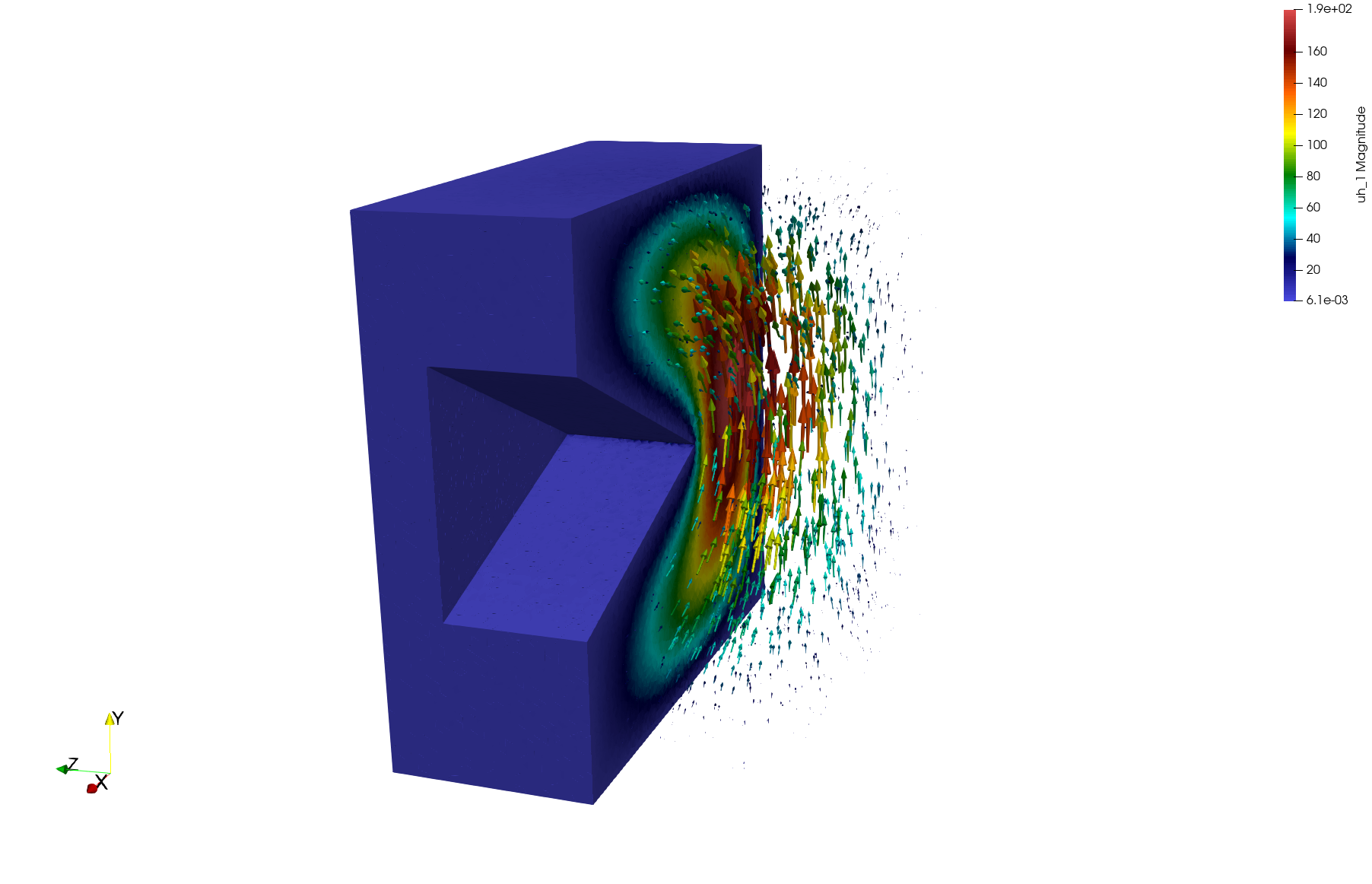}
	\end{minipage}\\
	\begin{minipage}{0.32\linewidth}\centering
		{ $|\brho_{h,1}|$, $\nu=0.50$}\\
		\includegraphics[scale=0.07,trim=16cm 0cm 19cm 5cm,clip]{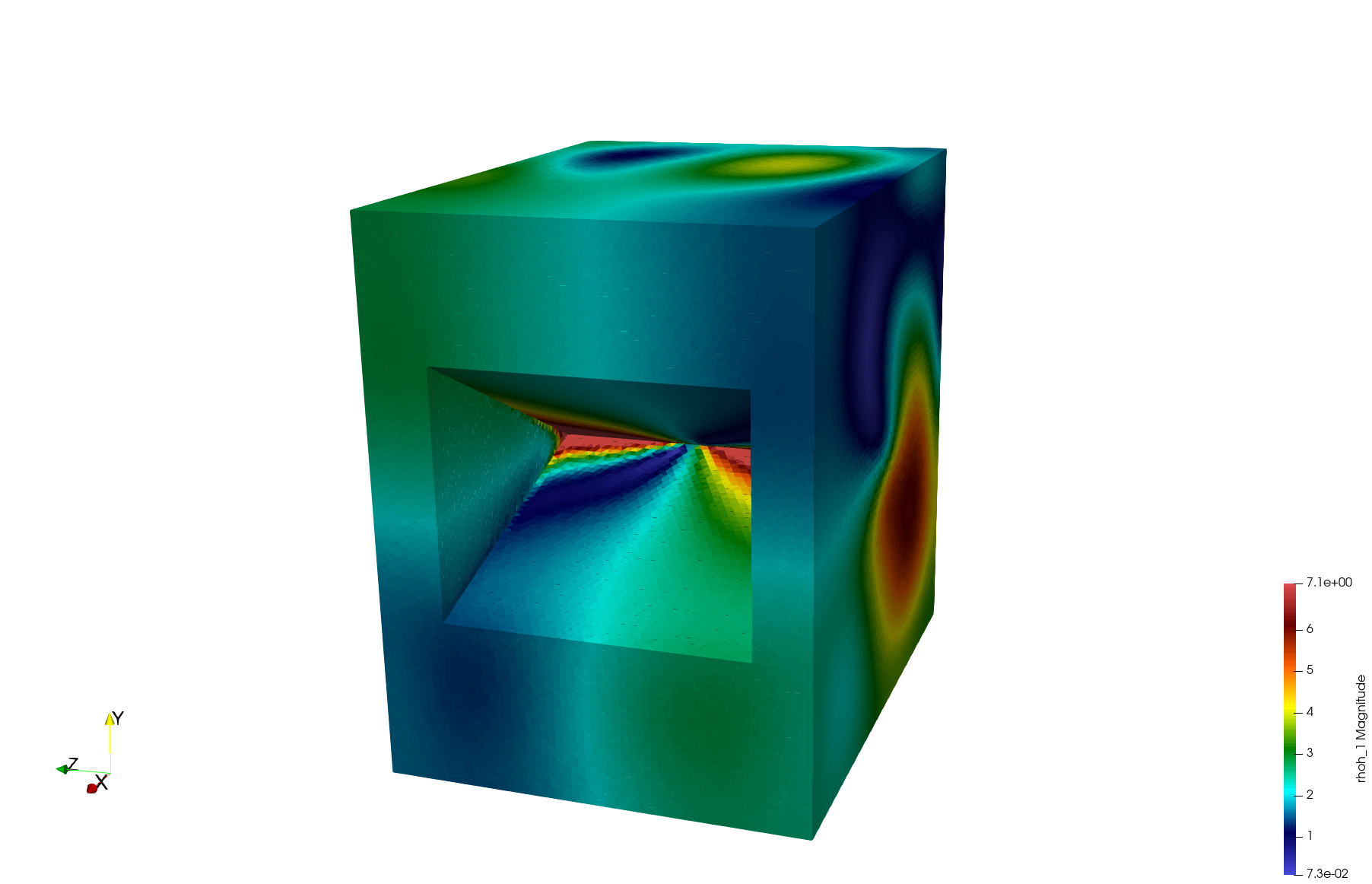}
	\end{minipage}
	\begin{minipage}{0.32\linewidth}\centering
		{ $|\bsig_{h,1}|$, $\nu=0.50$}\\
		\includegraphics[scale=0.07,trim=16cm 0cm 19cm 5cm,clip]{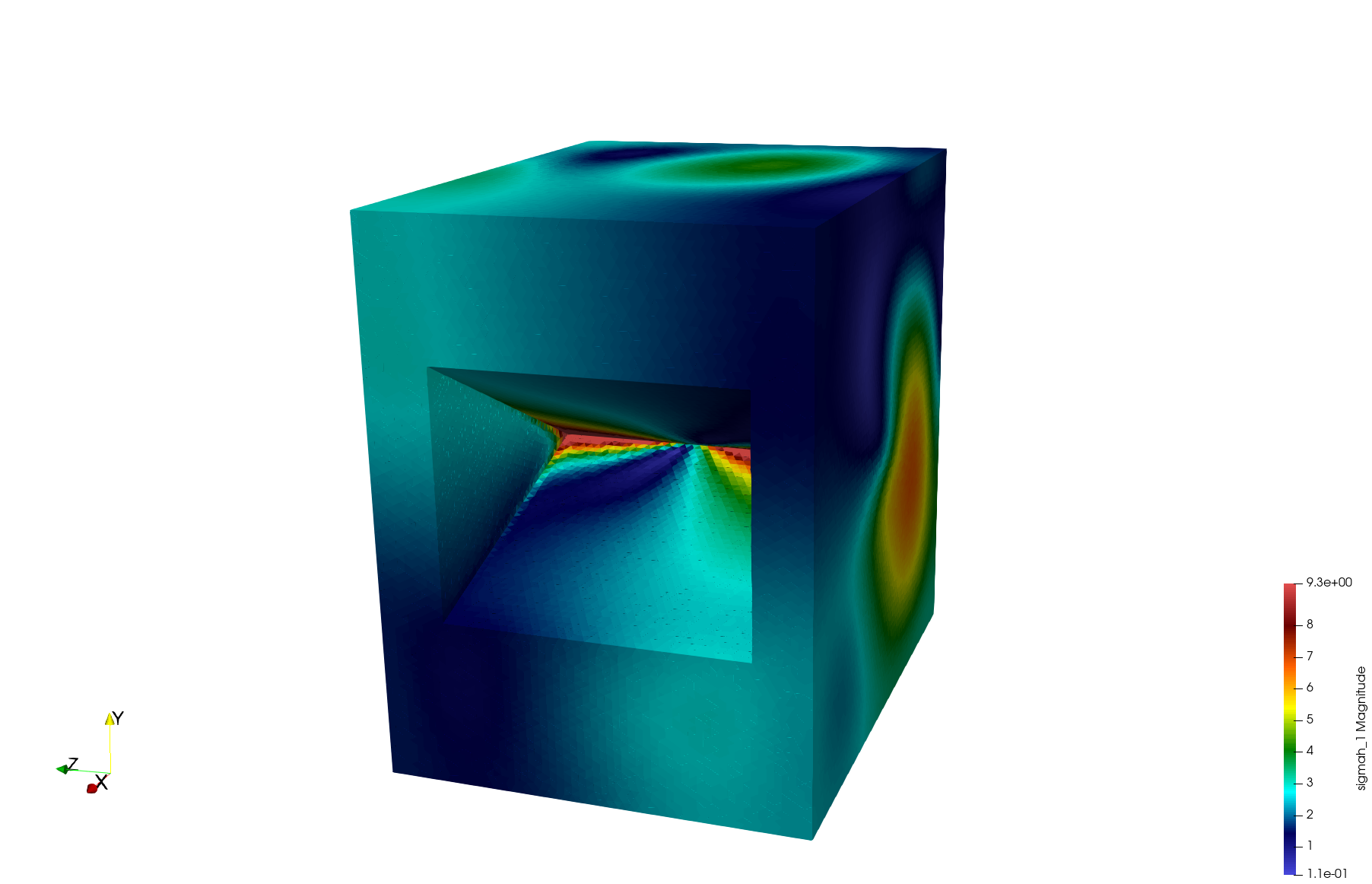}
	\end{minipage}
	\begin{minipage}{0.32\linewidth}\centering
		{ $|\bu_{h,1}|$, $\nu=0.50$}\\
		\includegraphics[scale=0.07,trim=16cm 0cm 19cm 5cm,clip]{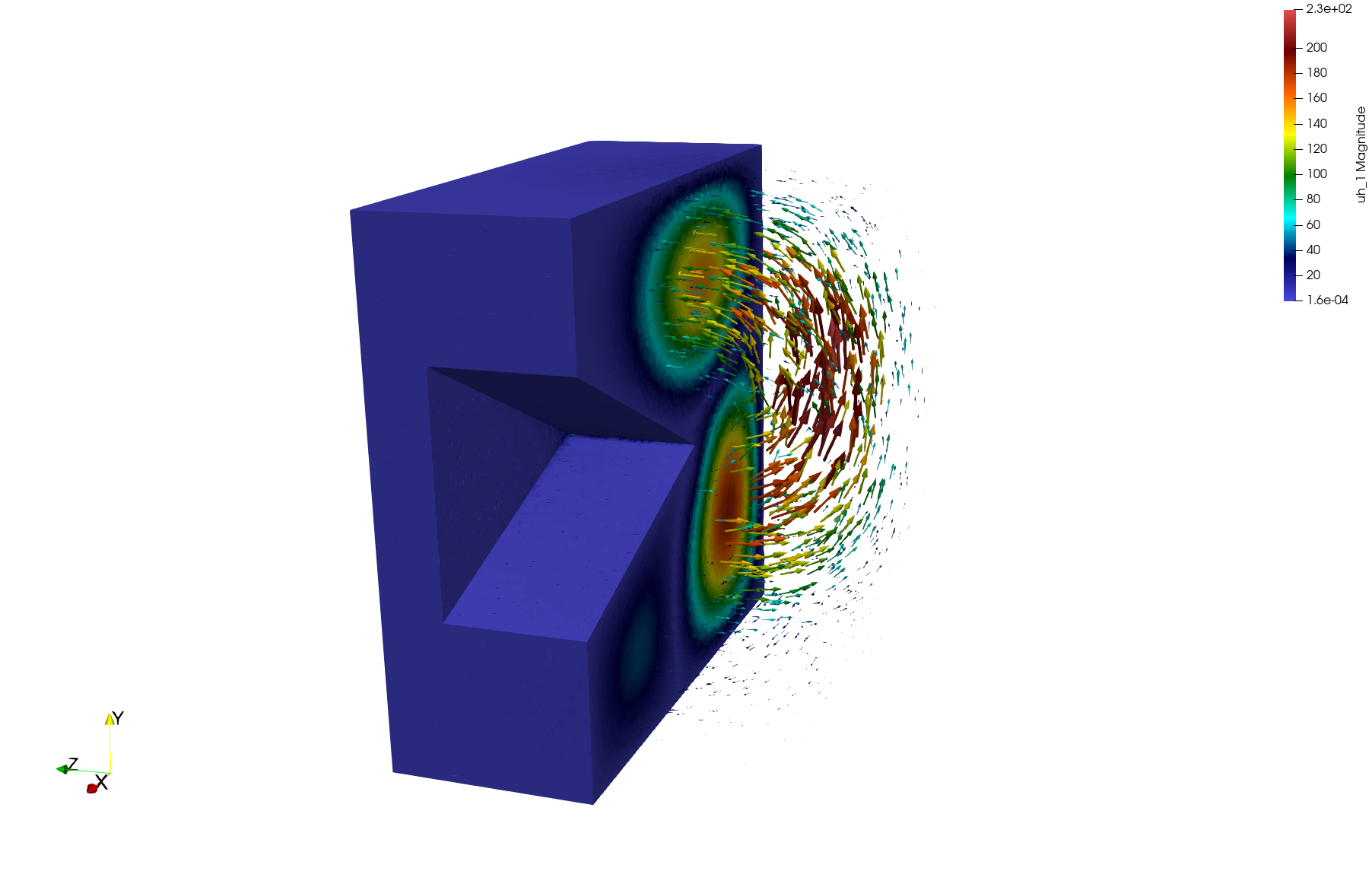}
	\end{minipage}
	\caption{Test~\ref{sec:adaptive-cube-notch}. Magnitudes of the pseudostress and the corresponding post-processed stress and displacement fields for the first eigenfrequency with different values of $\nu$.}
	\label{fig:cube-notch-eigenfunctions}
\end{figure}}

\bibliographystyle{siamplain}
\bibliography{oseen-eigenvalue_up}

@article {MR4279087,
    AUTHOR = {Bertrand, Fleurianne and Boffi, Daniele and Ma, Rui},
     TITLE = {An adaptive finite element scheme for the
              {H}ellinger-{R}eissner elasticity mixed eigenvalue problem},
   JOURNAL = {Comput. Methods Appl. Math.},
  FJOURNAL = {Computational Methods in Applied Mathematics},
    VOLUME = {21},
      YEAR = {2021},
    NUMBER = {3},
     PAGES = {501--512},
      ISSN = {1609-4840},
   MRCLASS = {65N25 (65N30 65N50 74B05)},
  MRNUMBER = {4279087},
       DOI = {10.1515/cmam-2020-0034},
}

@article{lepe2022posteriori,
  title={A posteriori analysis for a mixed FEM discretization of the linear elasticity spectral problem},
  author={Lepe, Felipe and Rivera, Gonzalo and Vellojin, Jesus},
  journal={Journal of Scientific Computing},
  volume={93},
  number={1},
  pages={10},
  year={2022},
  publisher={Springer},
  DOI = {https://doi.org/10.1007/s10915-022-01972-y},
}

@article {MR4396855,
    AUTHOR = {Meddahi, Salim},
     TITLE = {Variational eigenvalue approximation of non-coercive operators
              with application to mixed formulations in elasticity},
   JOURNAL = {SeMA J.},
  FJOURNAL = {SeMA Journal. Boletin de la Sociedad Espa\~{n}ola de Matem\'{a}tica
              Aplicada},
    VOLUME = {79},
      YEAR = {2022},
    NUMBER = {1},
     PAGES = {139--164},
      ISSN = {2254-3902},
   MRCLASS = {65N25 (65M15 65N12 65N30)},
  MRNUMBER = {4396855},
       DOI = {10.1007/s40324-021-00279-6},
}

@article {MR4570534,
    AUTHOR = {Inzunza, Daniel and Lepe, Felipe and Rivera, Gonzalo},
     TITLE = {Displacement-pseudostress formulation for the linear
              elasticity spectral problem},
   JOURNAL = {Numer. Methods Partial Differential Equations},
  FJOURNAL = {Numerical Methods for Partial Differential Equations. An
              International Journal},
    VOLUME = {39},
      YEAR = {2023},
    NUMBER = {3},
     PAGES = {1996--2017},
      ISSN = {0749-159X,1098-2426},
   MRCLASS = {65M60 (74B05)},
  MRNUMBER = {4570534},
       DOI = {10.1002/num.22955},

}

@article{LoMoRo_SIAM2010,
author = {Lovadina, Carlo and Mora, David and Rodr\'{\i}guez, Rodolfo},
title = {Approximation of the Buckling Problem for Reissner–Mindlin Plates},
journal = {SIAM Journal on Numerical Analysis},
volume = {48},
number = {2},
pages = {603-632},
year = {2010},
doi = {10.1137/090747336},
}

@article {MMR3,
    AUTHOR = {Meddahi, Salim and Mora, David and Rodr\'{\i}guez, Rodolfo},
     TITLE = {A finite element analysis of a pseudostress formulation for
              the {S}tokes eigenvalue problem},
   JOURNAL = {IMA J. Numer. Anal.},
  FJOURNAL = {IMA Journal of Numerical Analysis},
    VOLUME = {35},
      YEAR = {2015},
    NUMBER = {2},
     PAGES = {749--766},
      ISSN = {0272-4979},
   MRCLASS = {65N25 (65N15 65N30 76D07)},
  MRNUMBER = {3335223},
MRREVIEWER = {Jiguang Sun},
       DOI = {10.1093/imanum/dru006},

}

@book {MR0203473,
    AUTHOR = {Kato, Tosio},
     TITLE = {Perturbation theory for linear operators},
    SERIES = {Die Grundlehren der mathematischen Wissenschaften, Band 132},
 PUBLISHER = {Springer-Verlag New York, Inc., New York},
      YEAR = {1966},
     PAGES = {xix+592},
   MRCLASS = {47.00 (47.48)},
  MRNUMBER = {0203473},
MRREVIEWER = {L. de Branges},
}

@article {MR483401,
    AUTHOR = {Descloux, Jean and Nassif, Nabil and Rappaz, Jacques},
     TITLE = {On spectral approximation. {II}. {E}rror estimates for the
              {G}alerkin method},
   JOURNAL = {RAIRO Anal. Num\'{e}r.},
  FJOURNAL = {RAIRO Analyse Num\'{e}rique},
    VOLUME = {12},
      YEAR = {1978},
    NUMBER = {2},
     PAGES = {113--119, iii},
      ISSN = {0399-0516},
   MRCLASS = {65J05},
  MRNUMBER = {483401},
MRREVIEWER = {Juhani Pitk\"{a}ranta},
       DOI = {10.1051/m2an/1978120201131},
}

@article {MR483400,
    AUTHOR = {Descloux, Jean and Nassif, Nabil and Rappaz, Jacques},
     TITLE = {On spectral approximation. {I}. {T}he problem of convergence},
   JOURNAL = {RAIRO Anal. Num\'{e}r.},
  FJOURNAL = {RAIRO Analyse Num\'{e}rique},
    VOLUME = {12},
      YEAR = {1978},
    NUMBER = {2},
     PAGES = {97--112, iii},
      ISSN = {0399-0516},
   MRCLASS = {65J05},
  MRNUMBER = {483400},
MRREVIEWER = {Juhani Pitk\"{a}ranta},
       DOI = {10.1051/m2an/1978120200971},
}

@article {MR4542511,
    AUTHOR = {Meddahi, Salim},
     TITLE = {A {DG} method for a stress formulation of the elasticity
              eigenproblem with strongly imposed symmetry},
   JOURNAL = {Comput. Math. Appl.},
  FJOURNAL = {Computers \& Mathematics with Applications. An International
              Journal},
    VOLUME = {135},
      YEAR = {2023},
     PAGES = {19--30},
      ISSN = {0898-1221,1873-7668},
   MRCLASS = {65N25 (65N30 74Bxx)},
  MRNUMBER = {4542511},
MRREVIEWER = {Yu\ Zhang},
       DOI = {10.1016/j.camwa.2023.01.022},
}

@article {MR3860570,
    AUTHOR = {C\'aceres, Ernesto and Gatica, Gabriel N. and Sequeira,
              Fil\'ander A.},
     TITLE = {A mixed virtual element method for a pseudostress-based
              formulation of linear elasticity},
   JOURNAL = {Appl. Numer. Math.},
  FJOURNAL = {Applied Numerical Mathematics. An IMACS Journal},
    VOLUME = {135},
      YEAR = {2019},
     PAGES = {423--442},
      ISSN = {0168-9274,1873-5460},
   MRCLASS = {65N30 (65N15 74B05)},
  MRNUMBER = {3860570},
       DOI = {10.1016/j.apnum.2018.09.003},
}

@book {MR2050138,
    AUTHOR = {Ern, Alexandre and Guermond, Jean-Luc},
     TITLE = {Theory and practice of finite elements},
    SERIES = {Applied Mathematical Sciences},
    VOLUME = {159},
 PUBLISHER = {Springer-Verlag, New York},
      YEAR = {2004},
     PAGES = {xiv+524},
      ISBN = {0-387-20574-8},
   MRCLASS = {65-02 (65M60 65N30 74S05 76M10 78M10)},
  MRNUMBER = {2050138},
MRREVIEWER = {R.\ S.\ Anderssen},
       DOI = {10.1007/978-1-4757-4355-5},

}

@article{barrata2023dolfinx,
	title={{DOLFINx}: The next generation FEniCS problem solving environment},
	author={Barrata, Igor A and Dean, Joseph P and Dokken, J{\o}rgen S and HABERA, Michal and HALE, Jack and Richardson, Chris and Rognes, Marie E and Scroggs, Matthew W and Sime, Nathan and Wells, Garth N},
	year={2023},
	doi = {10.5281/zenodo.10447666},
}

@article{scroggs2022basix,
	title={Basix: a runtime finite element basis evaluation library},
	author={Scroggs, Matthew W and Baratta, Igor A and Richardson, Chris N and Wells, Garth N},
	journal={Journal of Open Source Software},
	volume={7},
	number={73},
	pages={3982},
	year={2022}
}

@book {MR3097958,
    AUTHOR = {Boffi, Daniele and Brezzi, Franco and Fortin, Michel},
     TITLE = {Mixed finite element methods and applications},
    SERIES = {Springer Series in Computational Mathematics},
    VOLUME = {44},
 PUBLISHER = {Springer, Heidelberg},
      YEAR = {2013},
     PAGES = {xiv+685},
      ISBN = {978-3-642-36518-8; 978-3-642-36519-5},
   MRCLASS = {65-02 (65M60 65N30)},
  MRNUMBER = {3097958},
MRREVIEWER = {Beny Neta},
       DOI = {10.1007/978-3-642-36519-5},
       OPTURL = {https://doi.org/10.1007/978-3-642-36519-5},
}

@article{hernandez2005slepc,
	title={{SLEPc}: A scalable and flexible toolkit for the solution of eigenvalue problems},
	author={Hernandez, Vicente and Roman, Jose E and Vidal, Vicente},
	journal={ACM Transactions on Mathematical Software (TOMS)},
	volume={31},
	number={3},
	pages={351--362},
	year={2005},
	publisher={ACM New York, NY, USA}
}

@book{verfuhrt1996,
	author = {Verfürth, Rüdiger},
	isbn = {978-0-471-96795-8},
	pages = {I-VI, 1-127},
	publisher = {Wiley},
	series = {Advances in numerical mathematics},
	title = {A review of a posteriori error estimation and adaptive mesh-refinement techniques},
	year = 1996
}

@book {MR1885308,
    AUTHOR = {Ainsworth, Mark and Oden, J. Tinsley},
     TITLE = {A posteriori error estimation in finite element analysis},
    SERIES = {Pure and Applied Mathematics (New York)},
 PUBLISHER = {Wiley-Interscience [John Wiley \& Sons], New York},
      YEAR = {2000},
     PAGES = {xx+240},
      ISBN = {0-471-29411-X},
   MRCLASS = {65-02 (65N15)},
  MRNUMBER = {1885308},
MRREVIEWER = {Ricardo G. Dur\'{a}n},
       DOI = {10.1002/9781118032824},

}

@book {MR3059294,
    AUTHOR = {Verf\"{u}rth, R\"{u}diger},
     TITLE = {A posteriori error estimation techniques for finite element
              methods},
    SERIES = {Numerical Mathematics and Scientific Computation},
 PUBLISHER = {Oxford University Press, Oxford},
      YEAR = {2013},
     PAGES = {xx+393},
      ISBN = {978-0-19-967942-3},
   MRCLASS = {65N30 (35J25 35K20 35Q30 35Q74 65N15)},
  MRNUMBER = {3059294},
MRREVIEWER = {Manfred Dobrowolski},
       DOI = {10.1093/acprof:oso/9780199679423.001.0001},
}

@article {MR3453481,
    AUTHOR = {Gatica, Gabriel N. and Gatica, Luis F. and Sequeira, Fil\'{a}nder
              A.},
     TITLE = {A priori and a posteriori error analyses of a
              pseudostress-based mixed formulation for linear elasticity},
   JOURNAL = {Comput. Math. Appl.},
  FJOURNAL = {Computers \& Mathematics with Applications. An International
              Journal},
    VOLUME = {71},
      YEAR = {2016},
    NUMBER = {2},
     PAGES = {585--614},
      ISSN = {0898-1221},
   MRCLASS = {65N30 (65N15 74B05)},
  MRNUMBER = {3453481},
MRREVIEWER = {Igor Bock},
       DOI = {10.1016/j.camwa.2015.12.009},
}

@article {MR3962898,
    AUTHOR = {Lepe, Felipe and Meddahi, Salim and Mora, David and Rodr\'{\i}guez,
              Rodolfo},
     TITLE = {Mixed discontinuous {G}alerkin approximation of the elasticity
              eigenproblem},
   JOURNAL = {Numer. Math.},
  FJOURNAL = {Numerische Mathematik},
    VOLUME = {142},
      YEAR = {2019},
    NUMBER = {3},
     PAGES = {749--786},
      ISSN = {0029-599X},
   MRCLASS = {65N30 (65N12 65N15 65N25 74B10)},
  MRNUMBER = {3962898},
MRREVIEWER = {Ioannis Doltsinis},
       DOI = {10.1007/s00211-019-01035-9},
       OPTURL = {https://doi.org/10.1007/s00211-019-01035-9},
}

@article{MR1115235,
    AUTHOR = {Babu\v{s}ka, I. and Osborn, J.},
     TITLE = {Handbook of numerical analysis. {V}ol. {II}},
    EDITOR = {Ciarlet, P. G. and Lions, J.-L.},
      NOTE = {Finite element methods. Part 1},
 PUBLISHER = {North-Holland, Amsterdam},
      YEAR = {1991},
     PAGES = {x+928},
      ISBN = {0-444-70365-9},
   MRCLASS = {65-00 (65-02 65M60 65N30)},
  MRNUMBER = {1115235},
MRREVIEWER = {Chris Arney},
}

@article {MR2009375,
    AUTHOR = {Hiptmair, R.},
     TITLE = {Finite elements in computational electromagnetism},
   JOURNAL = {Acta Numer.},
  FJOURNAL = {Acta Numerica},
    VOLUME = {11},
      YEAR = {2002},
     PAGES = {237--339},
      ISSN = {0962-4929},
   MRCLASS = {78M10 (65N30)},
  MRNUMBER = {2009375},
MRREVIEWER = {JiChun Li},
       DOI = {10.1017/S0962492902000041},
       OPTURL = {https://doi.org/10.1017/S0962492902000041},
}

@article {MR3036997,
    AUTHOR = {Meddahi, Salim and Mora, David and Rodr\'{\i}guez, Rodolfo},
     TITLE = {Finite element spectral analysis for the mixed formulation of
              the elasticity equations},
   JOURNAL = {SIAM J. Numer. Anal.},
  FJOURNAL = {SIAM Journal on Numerical Analysis},
    VOLUME = {51},
      YEAR = {2013},
    NUMBER = {2},
     PAGES = {1041--1063},
      ISSN = {0036-1429},
   MRCLASS = {65N30 (65N25 74B05 74S05)},
  MRNUMBER = {3036997},
MRREVIEWER = {Ignacio Romero},
       DOI = {10.1137/120863010},
}

@article {MR3376135,
    AUTHOR = {M\'{a}rquez, Antonio and Meddahi, Salim and Tran, Thanh},
     TITLE = {Analyses of mixed continuous and discontinuous {G}alerkin
              methods for the time harmonic elasticity problem with reduced
              symmetry},
   JOURNAL = {SIAM J. Sci. Comput.},
  FJOURNAL = {SIAM Journal on Scientific Computing},
    VOLUME = {37},
      YEAR = {2015},
    NUMBER = {4},
     PAGES = {A1909--A1933},
      ISSN = {1064-8275},
   MRCLASS = {65N30 (65N12 65N15 74B10)},
  MRNUMBER = {3376135},
MRREVIEWER = {Igor Bock},
       DOI = {10.1137/14099022X},
}
\end{document}